\def \ve{\varepsilon}
\def\longrightharpoonup{\relbar\joinrel\rightharpoonup}
\def\longleftharpoondown{\leftharpoondown\joinrel\relbar}
\def\longrightleftharpoons{
  \mathop{
    \vcenter{
      \hbox{
      \ooalign{
        \raise1pt\hbox{$\longrightharpoonup\joinrel$}\crcr
	  \lower1pt\hbox{$\longleftharpoondown\joinrel$}
	  }
      }
    }
  }
}
\def\cA{\mathcal{A}}
\def\cB{\mathcal{B}}
\def\Xint#1{\mathchoice
{\XXint\displaystyle\textstyle{#1}}%
{\XXint\textstyle\scriptstyle{#1}}%
{\XXint\scriptstyle\scriptscriptstyle{#1}}%
{\XXint\scriptscriptstyle\scriptscriptstyle{#1}}%
\!\int}
\def\XXint#1#2#3{{\setbox0=\hbox{$#1{#2#3}{\int}$ }
\vcenter{\hbox{$#2#3$ }}\kern-.58\wd0}}
\def\dashint{\Xint-}  
\crefname{hypothesis}{Hypothesis}{Hypotheses}
\title{Multiscale analysis of nutrient uptake by plant roots  with sparse distribution of root hairs: Nonstandard scaling \thanks{Submitted to the editors DATE.
\funding{Jakub K\"ory and John King acknowledge funding from FUTUREROOTS
Project (project ID: 294729) between European Research Council and The University of Nottingham.}}}
\author{John King\thanks{School of Mathematical Sciences \& Centre for Plant Integrative Biology, School of Biosciences, University of Nottingham,  Nottingham NG7 2QL, United Kingdom  (\email{john.king@nottingham.ac.uk}).}
\and Jakub K\"ory\thanks{ School of Mathematics \& Statistics, University of Glasgow, University Place, Glasgow~G12~8QQ, United Kingdom
  (\email{jakub.koery@glasgow.ac.uk}).}
\and Mariya Ptashnyk\thanks{School of Mathematical and Computer Sciences, Heriot-Watt University, Edinburgh  EH14 4AL, United Kingdom (\email{m.ptashnyk@hw.ac.uk}).}}
\newcommand*{\addFileDependency}[1]{% argument=file name and extension
  \typeout{(#1)}
  \@addtofilelist{#1}
  \IfFileExists{#1}{}{\typeout{No file #1.}}
}
\newcommand*{\myexternaldocument}[1]{%
    \externaldocument{#1}%
    \addFileDependency{#1.tex}%
    \addFileDependency{#1.aux}%
}
\begin{document}

\maketitle

\begin{abstract}
 In this paper we undertake a multiscale analysis of nutrient uptake by  plant roots, considering different scale relations between the radius of root hairs and the distance between them.  We combine the method of formal asymptotic expansions and rigorous derivation of macroscopic equations. The former prompt us to study a distinguished limit (which yields a distinct effective equation), allow us to determine higher order correctors and provide motivation for the construction of correctors essential for rigorous derivation of macroscopic equations. In the final section, we validate the results of our asymptotic analysis by direct comparison with full-geometry numerical simulations.
\end{abstract}
% REQUIRED
\begin{keywords}
 sparse root hairs, nutrient uptake by plants, homogenization,  perforated domains by thin tubes, parabolic equations
\end{keywords}

% REQUIRED
\begin{AMS}
 35Bxx, 35K20, 35Q92, 35K60,  92C80
\end{AMS}

\section{Introduction}

An efficient nutrient uptake by plant roots is very important for plant growth and development \cite{barber1995, brady1996}.  Root hairs, the cylindrically-shaped lateral extensions of epidermal cells that increase the surface area of the root system,  play a significant  role in the uptake of nutrients by plant roots \cite{datta2011}. Thus to optimize the nutrient uptake it is important to understand better the impact of root hairs on the uptake processes.
Early phenomenological models describe the effect of root hairs on the nutrient uptake by increasing the radius of roots \cite{passioura1963}.
Microscopic modelling and analysis  of nutrient uptake by root hairs on the scale of a single hair, assuming periodic distribution of hairs and that the distance between them is of the same order as their radius were considered in  \cite{leitner2010_1, Ptashnyk2010b, zygalakis2011}.

In contrast to previous results, in this work we consider a sparse distribution of root hairs, with the radius of root hairs much smaller than the distance between them. We consider two different regimes given by scaling relations between the hair radius and the distance between neighboring hairs. Applying multiscale analysis techniques, we derive  macroscopic equations from the microscopic description by applying both the  method of formal asymptotic expansions and rigorous proofs of convergences of  sequences of solutions of microscopic (full-geometry) problems.  Due to non-standard scale relations between the size of the microscopic structure and the periodicity, the homogenization techniques of two-scale convergence, the periodic unfolding method, $\Gamma$-  or G-convergences, see e.g.\ \cite{Giorgi1973, DalMaso1993,  Murat1997, Nguetseng89},  do not apply directly  and a different approach needs to be developed.
The construction of inner and outer layer approximation problems constitutes the main idea in the derivation of the  macroscopic problems using formal asymptotic expansions. This approach allows us also to obtain equations for higher-order approximations to the macroscopic solutions. To show convergence of solutions of the multiscale (microscopic) problems to those of the corresponding macroscopic problems, we construct appropriate correctors   to pass to the limit in the integrals over the boundaries of the microstructure given by root hairs.
We also compare numerical solutions  of the multiscale problems with solutions of macroscopic problems  and  higher  (first and second) order approximations, derived for different scale-relations between the size of the hairs and the size of the periodicity.

Similar results for elliptic equations and variational inequalities were  obtained in \cite{gomez2015, Jaeger_small,Jaeger_small_2} using the monotonicity of the nonlinear function in the boundary conditions and a variational inequality approach. The construction of  correctors near surfaces of very small holes  was considered in \cite{Cioranescu_thiny, Donato1988} to derive macroscopic equations for linear elliptic  problems with zero Dirichlet  and given Robin boundary conditions.  The extension of the periodic unfolding method to domains with very small holes was introduced  in \cite{cabarrubias2016}   to analyze linear wave and heat equations posed in periodically perforated domains with small holes and Dirichlet conditions on the boundary of the holes. 
%, which were motivated by the results obtained using the method of formal asymptotic expansion. 

The paper is organized as follows. In Section~\ref{sec_1} we formulate a model for nutrient uptake by plant roots and  root hairs.  In Section~\ref{formal_derivation} we derive macroscopic equations and equations for the first- and second-order correctors, for different scale-relations between the radius of root hairs and the distance between them, by using formal asymptotic expansions. The proof of the convergence of a sequence of solutions of the multiscale problem to those of the macroscopic equations via the construction of corresponding microscopic correctors is given in   Section~\ref{convergence}.   The linear and  nonlinear Robin boundary conditions depending on solution of the microscopic problem considered in this manuscript require new ideas in the construction of the corresponding correctors. Numerical simulations of both multiscale and macroscopic problems are presented in Section~\ref{sec_numerics} and we conclude in Section~\ref{sec:discussion} with a brief discussion.

\section{Formulation of the problem}\label{sec_1}
 We consider diffusion of  nutrients in a domain around a plant root and its uptake by  root hairs and through the root surface. The representative length of the root is chosen to be $R=1$~cm and the model is subsequently formulated in dimensionless terms
 %so that all the length scales are non-dimensionalized with respect to $R$ 
 (see the Supplementary materials for comments on the non-dimensionalization and on parameter values). The root surface is treated as planar, which approximates the actual (curved) geometry well enough, provided that the distance between hairs measured at the root surface is comparable to the distance between hair tips, as discussed in \cite{leitner2010_1}. A generalization that addresses root curvature is investigated in \cite{kory2018}.

\begin{figure}
\centering
\subfloat[Multiscale domain]
{\begin{overpic}[ width=0.61\textwidth,tics=10]{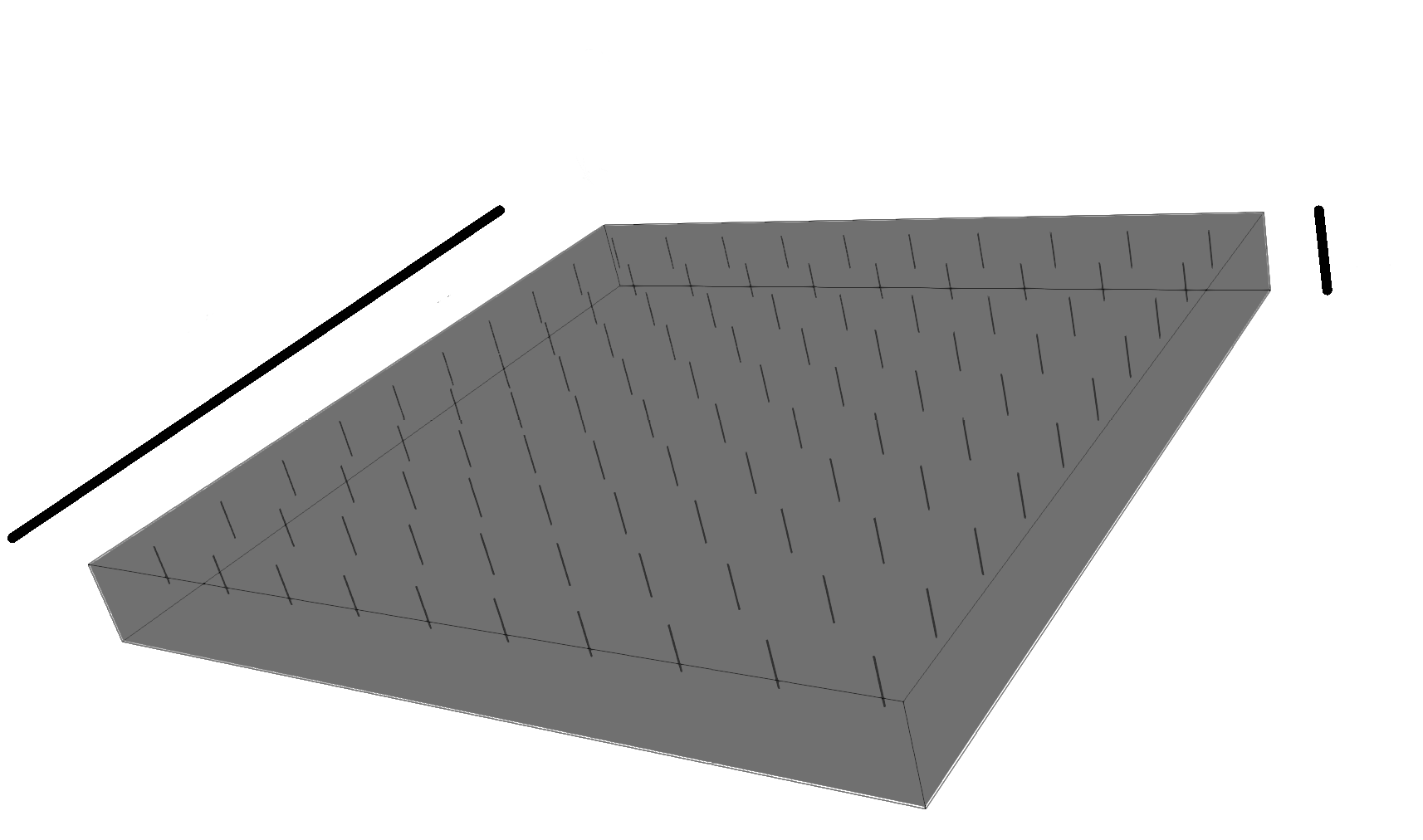}
\put(214,95){$M$}
\put(32,80){$1$}
\put(160,108){$\Omega^\ve$}
\linethickness{3pt}
%\color{grey}
\put(157,112){\vector(-1,-2){6}}
%%\put(145,23){\vector(1,0){20}}
%\color{blue}
\put(143,21){\vector(1,-4){5}}
\put(143,21){\vector(3,4){12}}
\put(143,21){\vector(-4,1){20}}
\put(157,30){$x_1$}
\put(120,17){$x_2$}
\put(151,0){$x_3$}
\color{black}
%\put(330,184){\vector(-1,0){70}}
%\put(155,13){$\Omega^\ve$}
\end{overpic} \label{full_domain}}
 \subfloat[Periodicity~cell]{\begin{overpic}[width=0.35\textwidth,tics=9]{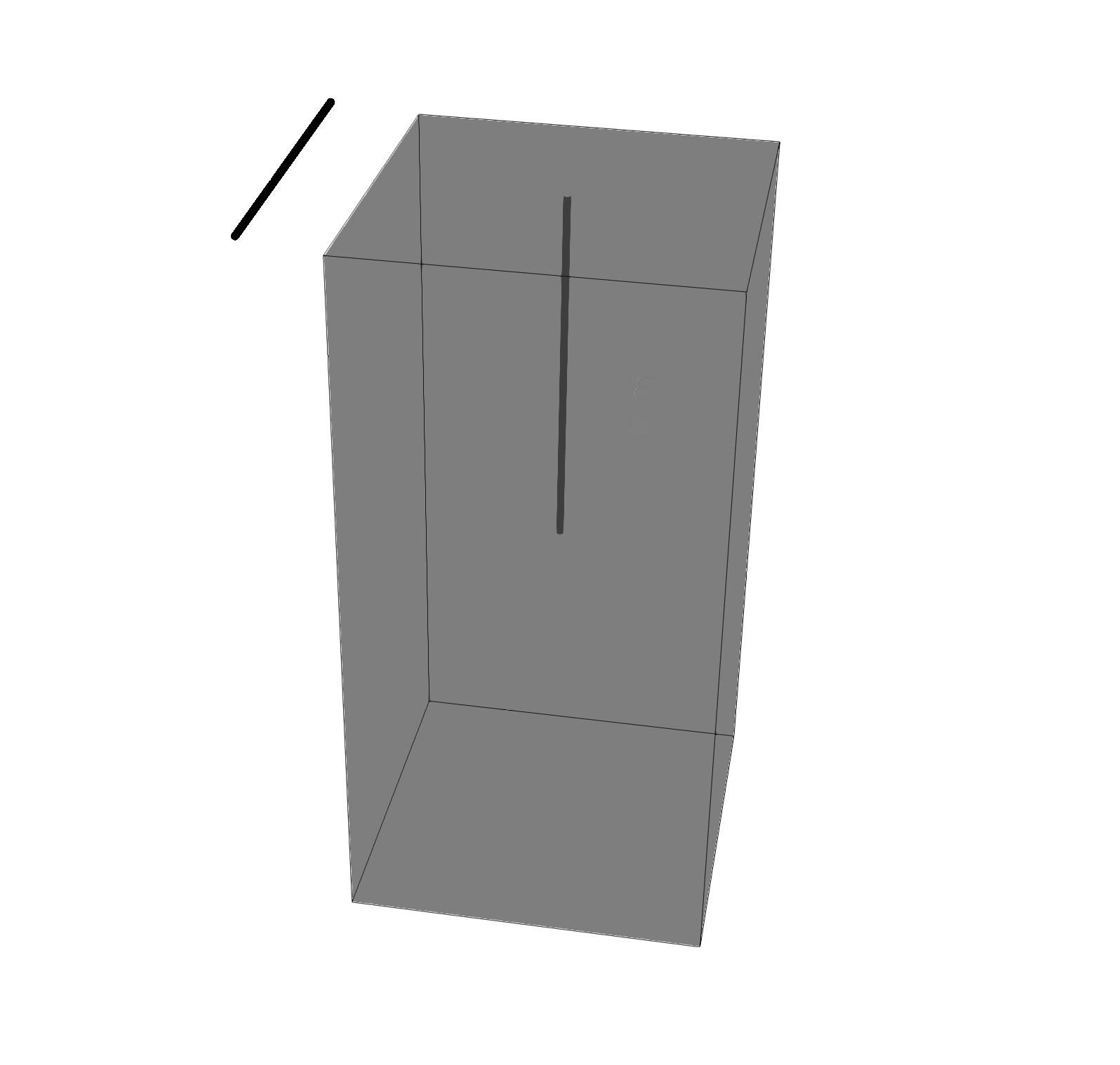}
\put(28,108){$\ve$}
\put(73,75){$\Gamma^\ve$}
\end{overpic} \label{unit_cell}}
\caption{Problem geometry}
\end{figure}

Consider a domain  $\Omega = G  \times (0,M)$ around a single plant root, with $M>0$ being representative of the half-distance between neighboring roots, where the Lipschitz domain $G \subset  \mathbb R^2$ represents the part of the root surface under consideration. %From this domain we extract root hairs as follows.
We assume that the root hairs are circular cylinders (of dimensionless length $L$, with $L<M$,  and radius $r_\ve$) orthogonal to the (planar) root surface, on which they are periodically distributed, see Figure~\ref{full_domain}.  A single root hair can be described~as
$$  B_{r_\ve} \times (0,L), \quad \text{ where } \;  B_{r_\ve} = \{ (x_1, x_2) \in \mathbb R^2 \; : \; x_1^2 + x_2^2 < r_\ve^2 \}. $$  
%
%the boundary of $B_{r_\ve}$ is $ \partial B_{r_\ve} = \{ (x_1, x_2) \in \mathbb R^2 \; : \; x_1^2 + x_2^2 = r_\ve^2 \}$.
%and the root surface is then parameterized by $(x_1, x_2)$. 
Denoting by $Y=(-1/2, 1/2)^2$ the unit cell, and taking $\ve$ to be the small parameter (the representative distance between the root hairs being small compared to the root length), the set of root hairs belonging to the root surface %under consideration
can be written as
$$
\Omega^\ve_{1,L} =\bigcup_{\xi\in \Xi^\ve}  (\overline B_{r_\ve} +\ve \xi)\times (0,L),
 \; \;  \text{ with } \;  \Xi^\ve = \{ \xi \in \mathbb Z^2 \; : \;  \ve  (Y + \xi)\  \subset G \},
$$
i.e. we only include the root hairs whose base is fully contained in $G$. The solution domain is then defined by  $\Omega^\ve = \Omega \setminus \Omega_{1,L}^\ve$.

We assume the root hairs to be sparsely distributed, i.e.\  $r_\ve \ll \ve \ll 1$, define $a_\ve=  r_\ve/ \ve \ll 1$, and assume that $M = O(1)$ and $L = O(1)$.  The surfaces of the root hairs are given by
$$
\Gamma^\ve = \bigcup_{\xi\in \Xi^\ve}  (\partial B_{r_\ve} +\ve \xi)\times (0,L) .
$$
We shall also use the notation
%\begin{equation}\label{eqn:def_OmegaL}
$ \Omega_L = G \times (0,L) $
%\end{equation}
corresponding to the range of $x_3$ occupied by root hairs. 

Outside the root hairs we consider  the diffusion of  nutrients
\begin{equation}\label{main_1}
\begin{aligned}
\partial_t u_\ve = \nabla\cdot( D_u \nabla  u_\ve)  \qquad \text{ in } \quad \Omega^\ve, \; \;   t >0,
\end{aligned}
\end{equation}
with constant (dimensionless) diffusion coefficient  $D_u>0$, and assume that  nutrients are taken up on the root surface according to
\begin{equation}\label{bc_root}
\begin{aligned}
D_u \nabla u_\ve \cdot {\bf n } = - \beta\,  u_\ve \qquad  \text{on }  \quad \Gamma_R^\ve, \; \;  t>0,
\end{aligned}
\end{equation}
where $\Gamma_R^\ve =\overline{ \Omega^\ve} \cap  \{x_3 = 0 \}$  defines the surface of the root (excluding the root hairs)\footnote{Even though the analysis for a nonlinear boundary condition would be straightforward, we consider linear uptake here, as the emphasis will be on the derivation of sink terms resulting from the boundary conditions applied on the hair surfaces, which often are dominant in nutrient uptake.}, and on the surfaces of the root hairs
\begin{equation}\label{bc_hair}
\begin{aligned}
D_u \nabla u_\ve \cdot  {\bf n } = -\ve  K(a_\ve)  \,  g(u_\ve) \qquad  \text{on }  \quad \Gamma^\ve,  \; \;  t>0,
\end{aligned}
\end{equation}
where ${\bf n}$ denotes the outer-pointing  unit normal vector to $\partial \Omega^\ve$, $\beta \geq 0$ is an uptake rate, $g(\eta)$ is  smooth  (continuously differentiable) and monotone non-decreasing for $\eta \in [ - \tilde \varsigma, \infty)$, with  some $\tilde \varsigma >0$,    and $g(\eta) = g_1(\eta) + g_2(\eta)$, where  $g_1(\eta) \geq 0$ for $\eta\geq 0$, with $g_1(0) = 0$, and $g_2$ is sublinear, with $g_2(0) \leq 0$. The monotonicity of  $g$  ensures existence of a unique solution  $h$ of  $h+ \sigma g(h)= \zeta$,  with  $\zeta \ge 0$ and $\sigma >0$, important for the derivation of   macroscopic equations for   \eqref{main_1}-\eqref{bc_hair},~\eqref{bc_rest},~\eqref{ic}. In  Section \ref{sec_numerics} we will consider the Michaelis-Menten type function
\begin{equation}\label{eqn:MM_form}
g(u) = \frac{u}{1+u},
\end{equation}
often used in modelling uptake processes by plant roots, e.g.\  \cite{claassen1974, epstein1952}, for which all of the above assumptions are satisfied, with $g_2 \equiv 0$.
The scaling factor $K(a_\ve)$ in~\eqref{bc_hair}~is~set~to~be 
\begin{equation} \label{scaling_K}
K(a_\ve) = \frac \kappa{a_\ve}, 
\end{equation}
with some positive constant $\kappa =O(1)$ (see the Supplementary materials for the justification of this scaling). On other parts of the boundary  $\partial \Omega^\ve$  we consider 
\begin{equation}\label{bc_rest}
\begin{aligned}
D_u \nabla u_\ve \cdot {\bf n } = 0 \qquad  \text{ on }  \quad \partial \Omega^\ve  \setminus (\Gamma^\ve\cup \Gamma_R^\ve), \; \;  t>0.
\end{aligned}
\end{equation}
The initial nutrient concentration is given by
\begin{equation}\label{ic}
u_\ve(0,x) = u_{\rm in}(x)  \quad \text{ for } x\in \Omega^\ve,
\end{equation}
where we assume that $u_{\rm in} \in H^2(\Omega)$ and $0 \leq u_{\rm in}(x) \leq u_{\rm max}$ for $x\in \Omega$.

First we consider the definition of a weak solution of   \eqref{main_1}--\eqref{bc_hair}, \eqref{bc_rest}, and \eqref{ic}.  We shall use the notations  $\Omega_T^\ve = (0,T)\times \Omega^\ve$, $\Gamma_T^\ve = (0,T)\times \Gamma^\ve$, and $\Gamma_{R, T}^\ve = (0,T)\times \Gamma_R^\ve$.
\begin{definition}
A weak solution of  problem \eqref{main_1}--\eqref{bc_hair}, \eqref{bc_rest},  \eqref{ic} is a function $u_\ve \in L^2(0,T;  H^1(\Omega^\ve))$, with $\partial_t u_\ve \in L^2((0, T) \times \Omega^\ve)$,  satisfying
\begin{equation}\label{main_weak}
\begin{aligned}
\int_{\Omega^\ve_T} \hspace{-0.1 cm } \big( \partial_t u_\ve \phi  + D_u \nabla u_\ve \cdot \nabla \phi \big) dx dt = -  \ve    \int_{\Gamma^\ve_T}\hspace{-0.1 cm}  \frac \kappa{a_\ve} g(u_\ve) \phi \,  d\gamma^\ve dt -   \int_{\Gamma_{R, T}^\ve} \hspace{-0.2 cm }  \beta\,  u_\ve \phi  \, d\gamma^\ve dt
\end{aligned}
\end{equation}
for $\phi \in L^2(0,T;  H^1(\Omega^\ve))$ and $u_\ve(t) \to u_{\rm in}$  in $L^2(\Omega^\ve)$ as $t\to 0$.
\end{definition}
Standard results for  parabolic equations, together with the above assumptions on $g$, ensure the existence of a unique  weak solution of problem   \eqref{main_1}--\eqref{bc_hair}, \eqref{bc_rest},  \eqref{ic}  for any fixed $\ve >0$, see e.g.\  \cite{ladyzh, lieberman}.

\section{Derivation of the macroscopic equations using the method of formal asymptotic expansions}\label{formal_derivation}
To derive the macroscopic equations from the multiscale problem \eqref{main_1}--\eqref{bc_hair}, \eqref{bc_rest},  \eqref{ic} we  first apply the method of the formal asymptotic expansions.  We shall consider different scalings for $a_\ve$ and derive equations for zero, first and second orders of approximation for solutions.  
 Apart from the macroscopic variables $x = (x_{1},x_{2},x_{3})$, % (as presented in Section \ref{sec_1}), 
 we further introduce $y= (y_{1},y_{2}) = (x_{1}/ \ve, x_{2}/\ve)$ and $z= ( z_{1},z_{2} ) = (x_{1}/ r_\ve, x_{2}/r_\ve) =  (y_{1}/ a_\ve, y_{2}/a_\ve)$. 
 Since there is no microscopic variation in the $x_3$ direction, we do not include any dependence on $y_3$ (or $z_3$). Notice that due to the assumed scale separation between the radius of the root hairs and the distance between them, three scales are present: an inner microscopic scale, $\|z\| = \sqrt{ z_1^2 + z_2^2} = O(1)$, corresponding to the radius of root hairs, an outer microscopic scale, $\|y\| = O(1)$, given by the distance between them and a macroscopic scale, $\|x\| = O(1)$, corresponding to a representative length of a plant root (for simplicity, we assume that the typical distance between two neighboring roots is of the same order as the representative root length).
 %The variation in the $x_3$ direction is purely on the macroscopic scale.

 In the derivation of macroscopic equations we consider two cases.  In the first, we take the limits in the order $\ve \to 0$ then $a_\ve \to 0$, with no relationship assumed between these two parameters and, in the second, we study a distinguished limit motivated by the analysis in the first section. Note that in the first case, instead of $a_\ve$, we suppress the subscript to recall that $a$ and $\ve$ are independent small parameters therein.
 
  \subsection{Derivation of the macroscopic equations in the case of complete 
  scale separation between $\ve$ and $a$} \label{subsec_diffScaling}
In this section, we assume complete scale separation between $\ve$ and $a$ (i.e.\  we take the limit $\ve \to 0$ followed by $a \to 0$). We adopt the ansatz
\begin{equation}\label{ansatz}
\begin{aligned}
u_{\ve}(t, x, a)=u_{0}(t,x, \hat x/\ve, a) + \ve u_{1}(t,x, \hat x /\ve, a) + \ve^{2} u_{2}(t,x, \hat x /\ve, a) + \cdots,
\end{aligned}
\end{equation}
 for $x\in \Omega_L$,  $t>0$, $\hat x = (x_1, x_2)$, and  $u_j (t,x, \cdot, a)$  being $Y$-periodic (cf. \cite{Bensoussan_1978, kevorkian2012}). We first fix $0<a<1/2$, then perform a separate $a \to 0$ analysis at each order in $\ve$. Note that for the simplicity of presentation, we will consider linear boundary condition in \eqref{bc_hair}, i.e.\ $g(u) = u$; the same calculations have also been performed for a nonlinear  function $g(u)$ by Taylor expanding of $g(u)$ about $u_0$ (see the Supplementary materials).
 
\subsubsection{$a=O(1)$}
Even though this problem has already been analyzed %using the method of formal asymptotic expansions 
in \cite{leitner2010_1, Ptashnyk2010b},  to set up for the sublimit $a \to 0$ in the next section,
%(and for completeness), 
we briefly recall the main outcomes of this analysis. The terms of order  $\ve^{-2}$ in \eqref{main_1} and of order $\ve^{-1}$ in \eqref{bc_hair} yield
\begin{equation} \label{u0_11}
\nabla_{y}\cdot (D_u \nabla_y u_{0}) =0  \quad  \text{ in } Y_{a}, \quad D_u \nabla_{y} u_{0} \cdot  \hat{\bf n } = 0 \quad  \text{ on } \Gamma_{a}, \quad u_0 \text{ is }  \; Y\text{-periodic},
\end{equation}
where $Y_{a} = Y\setminus \overline B_{a}$,  $\Gamma_{a} = \partial B_{a}$. The existence and uniqueness theory for linear elliptic equations with zero-flux and periodic boundary conditions  implies that solutions of~\eqref{u0_11}  are independent of $y$, i.e.
%\begin{equation}\label{u0_ind_of_micro}
$ u_0 = u_0( t,x, a)$.
%\end{equation}
 For the terms of order  $\ve^{-1}$ in  \eqref{main_1} and of order $\ve^{0}$ in \eqref{bc_hair} we then have
\begin{equation}\label{eq_u1}
 \nabla_{y}\cdot(D_u \nabla_y u_{1}) =0  \quad  \text{ in } Y_{a}, \qquad
  D_u\nabla_{y} u_{1} \cdot \hat{\bf n } = - D_u \nabla_{\hat x} u_{0} \cdot \hat{\bf n }  \; \; \; \; \text{ on }  \Gamma_{a},
 \end{equation}
and $u_1$ is $Y$-periodic, where $\hat x =(x_1, x_2)$. The solution reads
\begin{equation}\label{ansatz_u_1_form}
 u_{1} (t,x, y, a)= U_1(t,x, a) + \nabla_{\hat x} u_{0}(t,x,a) \cdot \boldsymbol{\nu}(y, a),
\end{equation}
% \end{equation}
where $U_1$ consists of contributions to $u_1$ that do not depend on the microscale and the  vector function $\boldsymbol{\nu}(y,a) = (\nu_{1}(y,a) , \nu_{2}(y,a))$     is a solution of 
\begin{equation}\label{eq_chi_1}
\begin{aligned}
\nabla_{y}\cdot(D_u \nabla_y \boldsymbol{\nu} )=0  \hspace{0.2cm} \text{ in } Y_a, \qquad
\nabla_{y}\boldsymbol{\nu} \cdot \hat{\bf n } = - \hat{\bf n } \hspace{0.2cm} \text{ on }  \Gamma_a, \qquad \boldsymbol{\nu} \; \text{ is }  \; Y\text{-periodic}.
\end{aligned}
\end{equation}
Finally, collecting  the terms of order  $\ve^0$  in \eqref{main_1} and of order  $\ve$ in  \eqref{bc_hair} yields
\begin{eqnarray}\label{eq_u2}
 \nabla_{y}\cdot(D_u \nabla_y u_2) && =   \partial_t u_{0}  - \nabla_{x}\cdot(D_u \nabla_x u_{0})    - \nabla_{\hat x}\cdot(D_u \nabla_{y} u_{1}) - \nabla_y\cdot (D_u \nabla_{\hat x} u_1)   \; \;  \text{ in } Y_{a},   \nonumber \\
&& D_u \nabla_y u_2 \cdot  \hat{\bf n } =  -K(a) u_{0} -  D_u \nabla_{\hat x} u_1 \cdot  \hat{\bf n }   \hspace{3.3 cm}     \text{ on  }   \Gamma_{a}.
\end{eqnarray}
%Then ansatz \eqref{ansatz_a} together with  the boundary condition  in \eqref{eq_u1} yields
% $$
% D_u \nabla_{y} u_{1} \cdot \hat{\bf n } = - D_u \nabla_{\hat x} u_{0,0} \cdot \hat{\bf n }  %- D_u \frac{1}{ \ln{(1/a)}}  \nabla_{\hat x} u_{0,1} \cdot \hat{\bf n } +\cdots .
%$$
Integrating \eqref{eq_u2} over $Y_a$ and using the divergence theorem (for more details see  \cite{kory2018}) gives as the leading-order macroscale problem
\begin{equation}\label{homo_fixed_a_delta}
\partial_t u_0 = \nabla_{x} \cdot \left( D_u \boldsymbol{D}_{\mathrm{eff}} (a) \nabla_x u_0 \right) - \frac{2 \pi a K(a)}{1- \pi a^2} u_0,
\end{equation}
where $ \boldsymbol{D}_{\mathrm{eff}}(a) = \boldsymbol{I} + \boldsymbol{B}(a)/(1- \pi a^2)$, $\boldsymbol{I}$ is the identity matrix and
\begin{equation}
\label{eqn:B_matrix}
\boldsymbol{B}(a) =
\begin{pmatrix}
\int_{Y_a} \frac{\partial \nu_{1}(y,a)}{\partial y_{1}} dy  & 0 & 0  \\
0 & \int_{Y_a} \frac{\partial \nu_{2}(y,a)}{\partial y_{2}} dy & 0  \\
0 & 0 & 0
\end{pmatrix}.
\end{equation}
\subsubsection{$a \ll 1$}\label{sec:independent_a_small}
Now, we analyze  \eqref{eq_chi_1} and \eqref{homo_fixed_a_delta} in the  limit $a \to 0$. Because of the large scale difference between the periodicity of the microscopic structure and the radius of the root hairs, in the analysis of the asymptotic behavior of the solution we can distinguish between the behavior in a region characterized by $\|z\| = O(1)$, which will correspond to an inner solution (denoted using a superscript $^I$) and the behavior in a region characterized by $\|y\| = O(1)$, corresponding to an outer solution (denoted using a superscript $^O$),  see~\cite{kory2018} for more details. Thus each term in \eqref{ansatz} requires its inner and outer analysis, some of which will involve expanding in $\delta = 1/ \ln(a^{-1}) \ll 1$. These logarithmic relationships arise due to the two-dimensional microstructure, reflecting the fact that the Green function of the Laplace operator in $\mathbb{R}^2$ is proportional to $\ln(r)$, as will become obvious at $O(\ve^2)$. Note that for any $n \geq 2$, we have
\begin{equation}\nonumber
\cdots \ll \ve^n \ll \cdots \ll \ve \ll \cdots \ll a^n \ll \cdots \ll a \ll \cdots \ll \delta^n \ll \cdots \ll \delta = 1/ \ln(a^{-1}) \ll 1,
\end{equation}
%This leads to the two distinguished limits alluded to above.
due to the assumption of the complete scale separation between $a$ and $\ve$. We expand
\begin{equation}\label{ansatz_a_I}
u_0(t, x, \delta) = u_{0,0}(t,x) + o(1).
\end{equation}
The macroscopic behaviour of $u_{0,0}$ will be determined via Fredholm alternative at $O(\ve^2)$ (see \eqref{eqn:homo_small_a_just_CE}). Proceeding to $O(\ve)$, we should not aim to satisfy the boundary condition from \eqref{eq_chi_1} on $\Gamma_{a}$ in the $\|y\| = O(1)$ region (this part of the boundary degenerates to a point in the limit $a \to 0$) and we have an expansion
\begin{equation}\label{chi_1_outer}
\boldsymbol{\nu}^O(y,a) =\boldsymbol{\nu}_{0}^O(y) + a\, \boldsymbol{\nu}_{1}^O(y) + \cdots,
\end{equation}
with $\boldsymbol{\nu}_{i}^O$ being $Y$-periodic  and satisfying Laplace's equation. Setting $z= y/a$ in \eqref{eq_chi_1} yields 
\begin{equation}\label{eq_w0}
\begin{aligned}
&\nabla_{z}\cdot(D_u \nabla_z \boldsymbol{\nu} )=0  \quad && \text{ in } Y_{1/a}, \qquad
\nabla_{z}\boldsymbol{\nu} \cdot \hat{\bf n } = - a \hat{\bf n } \quad && \text{ on }  \partial B_1,
\end{aligned}
\end{equation}
where $Y_{1/a} = a^{-1}  Y \setminus \overline B_1$. This suggests an inner expansion of the form
\begin{equation}\label{chi_1_inner}
\boldsymbol{\nu}^I(z,a) =\boldsymbol{\nu}_{0}^I(z) + a\, \boldsymbol{\nu}_{1}^I(z) + \cdots  . 
\end{equation}
It follows that $\boldsymbol{\nu}_{0}^I$ is independent of $z$ and 
\begin{equation}
\begin{aligned}
&\boldsymbol{\nu}_{1}^I(z) = -  \Big[\alpha\Big(r + \frac 1r\Big) + r\Big] \frac{(z_1, z_2)}{r} ,
\end{aligned}
\end{equation}
where $r=\|z\|$, and $\alpha = -1$ is required to match with the outer region. 
Hence
\begin{equation}\label{chi_11_I}
\begin{aligned}
\boldsymbol{\nu}_{1}^I(z) = \frac{(z_1, z_2)}{\|z\|^2} .
\end{aligned}
\end{equation}
To match the inner  $\boldsymbol{\nu}^I$ and outer $\boldsymbol{\nu}^O$, \eqref{chi_1_outer} has to contain terms of the form
$$   a \frac{ (z_1 , z_2)} {\|z\|^2} =  a^2 \frac{(y_1, y_2)}{\|y\|^2} $$
as $\|y\| \rightarrow 0$. Noting that the solution of 
$$\Delta_y \boldsymbol{v}(y) = 2 \pi \nabla_{y} \delta(y), \quad \boldsymbol{v} \; \text{ is }  \; Y\text{-periodic},$$
where $\delta(y)$ is the Dirac delta, %centered at $y=0$ (for which $\delta(y) = 0 \hspace{0.15cm} \forall y \neq 0$ and $\int_Y \delta(y) dy =1$)
 has the behavior
$$ \boldsymbol{v}(y) \sim \frac{( y_1 , y_2 )^T}{\|y\|^2}  \quad \text{ as }   \; \|y\| \rightarrow 0, $$
we infer that $\boldsymbol{\nu}_{2}^O = \boldsymbol{v}$. 
In order to uncover the effective behavior at the macroscale, we need to analyze \eqref{eq_u2} in the inner and outer regions and matching between these will eventually lead us to the homogenized equation~\eqref{eqn:homo_small_a_just_CE}. Using the information on the inner and outer behavior of $u_1$, see \eqref{ansatz_u_1_form} and \eqref{chi_11_I},  problem \eqref{eq_u2} becomes
\begin{eqnarray}\label{eq_u2_using_info}
\begin{aligned}
 \nabla_{y}\cdot(D_u \nabla_y u_2)  & = \hspace{0.1cm} \partial_t u_{0}  - \nabla_{x}\cdot(D_u \nabla_x u_{0}) +O(a)   \; \; &&  \text{ in } Y_{a},  \\
D_u \nabla_y u_2 \cdot  \hat{\bf n }  & = - K(a) u_{0}  -  D_u \nabla_{\hat x} \left( U_1 +  \nabla_{\hat x} u_{0} \cdot \boldsymbol{\nu} \right) \cdot  \hat{\bf n } \hspace{0.5 cm}    && \text{ on  }   \Gamma_{a} .
\end{aligned}
\end{eqnarray}
Rescaling by $z= y/a$ and using~\eqref{scaling_K}, we obtain
\begin{eqnarray}\label{eq_u2_using_info_rescaled}
\begin{aligned} 
\nabla_{z}\cdot(D_u \nabla_z u_2)
= & \hspace{0.1cm} O(a^2) && \text{ in }  Y_{1/a}  , \nonumber \\
  D_u \nabla_{z} u_2 \cdot \hat{\bf n } = & - \kappa \, u_0+O(a) && \text{ on }  \partial B_1, \nonumber
  \end{aligned} 
\end{eqnarray}
Recalling \eqref{ansatz_a_I}, we infer the following ansatz for $u_2$ 
\begin{equation}\label{u_2^I_form}
u_2(t,x,y,\delta) = U_2(t,x, \delta) + u_0(t,x, \delta) \psi(y,\delta),
\end{equation}
where the inner ($z= y/a = O(1)$) expansion for $\psi$ reads
\begin{equation}\label{chi_2_I_ansatz}
\psi^I(z, \delta) = \psi_{0}^I(z) + O(\delta)
\end{equation}
and at the leading order we get
\begin{equation}\label{eq_chi2_I}
\begin{aligned}
&\nabla_{z}\cdot(D_u \nabla_z \psi_{0}^I )=0  \quad && \text{ in } Y_{\infty}, \qquad
D_u \nabla_{z} \psi_{0}^I \cdot \hat{\bf n } = - \kappa \quad && \text{ on }  \partial B_1,
\end{aligned}
\end{equation}
where $Y_{\infty} = \mathbb{R}^2 \setminus \overline B_1$, the solution of which reads
\begin{equation}
\psi_{0}^I(z) = (\kappa/D_u) \ln{(\|z\|)}.
\end{equation}
Rewriting this in the outer variables $y$, we obtain
\begin{equation}\label{chi_20^I_in_y}
(\kappa/ D_u) \big( \ln{(\|y\|)} + \delta^{-1} \big).
\end{equation}
In the $\|y\| = O(1)$ region, the ansatz \eqref{u_2^I_form} (rescaled to $y$ variables) together with \eqref{chi_20^I_in_y} results in an outer expansion for $\psi$ of the form
\begin{equation}\label{chi_2_O_ansatz}
\psi^O(y, \delta) = \psi_{-1}^O(y) \delta^{-1} + \psi_{0}^O(y) + O(\delta),
\end{equation}
which means that the substitution of \eqref{u_2^I_form} into \eqref{eq_u2_using_info} gives at the leading order
\begin{equation}
\nabla_{y}\cdot(D_u \nabla_y \psi_{-1}^O) = 0   \; \;  \text{ in } Y, \quad \psi_{-1}^O \; \text{ is }  \; Y\text{-periodic}
\end{equation}
implying that $\psi_{-1}^O$ is independent of $y$. At the next order in the outer expansion, we need to capture the logarithmic contribution from \eqref{chi_20^I_in_y} (required for matching with the inner solution), and we thus conclude
\begin{equation}\nonumber
\begin{aligned}
&u_{0,0} \nabla_{y}\cdot(D_u \nabla_y \psi_{0}^O) = \hspace{0.1cm}  \partial_t u_{0,0}  - \nabla_{x}\cdot(D_u \nabla_x u_{0,0}) - 2 \pi \kappa u_{0,0} \hspace{0.03cm} \delta(y) \;  \; \text{ in } Y, \\
&\psi_0^O \qquad \text{ is }  Y\text{-periodic}.    
\end{aligned} 
\end{equation}
Due to the Fredholm alternative this problem admits a solution if and only if  %$u_{0,0}$ satisfies
\begin{equation}\label{eqn:homo_small_a_just_CE}
\partial_t u_{0,0} =  \nabla_{x} \cdot(D_u \nabla_{x} u_{0,0}) - 2 \pi  \kappa\,  u_{0,0}  \quad \text{ for } \; x\in \Omega_L, \; t>0. 
\end{equation}
We have thus obtained an outer approximation 
%(writing out only the leading-order terms in every $\delta$ expansion)
\begin{eqnarray}\label{final_outer_separation}\nonumber 
u_\ve = \Big[ u_{0,0}(t,x) + \cdots \Big] + \ve \Big[  U_{1,0}(t,x) +\boldsymbol{\nu}_{0}^O(y) \cdot \nabla_{\hat{x}} u_{0,0}(t,x) + \cdots  \Big]  \\
+ \ve^2 \Big[ U_{2,0}(t,x) + \delta^{-1} u_{0,0}(t,x) \psi_{-1}^O(y) + \cdots \Big] + \cdots.
\end{eqnarray}
Note as a consistency check that we could have also arrived at \eqref{eqn:homo_small_a_just_CE} more directly via the $a \to 0$ limit in \eqref{homo_fixed_a_delta} (for details, see section 4.2 in \cite{kory2018}). However, in general, as we have $\delta^{-1} \gg 1$, the $\ve^2 \delta^{-1}$ term could be promoted to $O(\ve)$ or even $O(1)$, depending on the specified limit behavior of $\delta$ with respect to $\ve \to 0$, thereby identifying the distinguished limit that we consider below.

%%%%%%%%%%%%%%%%%%%%%%%%%%%%%%%%%

\subsection{Derivation of macroscopic equations: distinguished limit}\label{sec:dist_limits}
%, i.e.\  $\ve = \lambda \big[ \ln{(1/a)}\big]^{-1}$ and $\ve^2  = \lambda \big[ \ln{(1/a)}\big]^{-1}$.}
In the asymptotic analysis in Section~\ref{subsec_diffScaling} we first took the limit $\ve \to 0$, and then $a_\ve \to 0$.
%Hence those calculations  are not valid in the case  when   $\ve = \lambda \big[ \ln{(1/a)}\big]^{-1}$  for some $\lambda=O(1)$. 
Motivated by the $\ve^2 \delta^{-1}$ term (with $\delta^{-1} = \ln(1/a_\ve)$) from \eqref{final_outer_separation}, in this section we consider the situation where $\ve$ and   $\ln(1/a_\ve)$ are dependent and analyze two cases, $\ve \ln(1/a_\ve) = O(1)$ (section \ref{sec:sparse_first_nonuni}) and $\ve^2 \ln(1/a_\ve) = O(1)$ (section \ref{sec:sparse_next_nonuni}).  
Note that even though the  case  $\ve \ln(1/a_\ve) = O(1)$ does not give us a distinguished limit, the $O(\ve)$ balance changes and thus this case is still worth studying. In both cases we set $K(a_\ve) = \kappa/a_\ve$ and use the formal asymptotic expansion
\begin{equation} \label{eqn:ansatz_dist}
u(t, x , \ve) = u_{0}(t, x, {\hat x}/{\ve}) + \ve u_{1}(t, x, {\hat x}/{\ve}) + \ve^{2} u_{2}(t, x, {\hat x}/{\ve}) +  \ve^{3} u_{3}(t, x, {\hat x}/{\ve}) + \cdots
\end{equation}
to derive the macroscopic equations, $u_{j}$ being $Y$-periodic with respect to the outer microscopic variables $y=\hat x/\ve$. %, for $\hat x = (x_1, x_2)$.
The convergence of solutions of the multiscale problems to solutions of the derived macroscopic equations %in both cases
will subsequently be confirmed via rigorous analysis in Section~\ref{convergence} and  numerical simulations in Section~\ref{sec_numerics}.

We consider a linear function $g(u)= u$ in the boundary condition \eqref{bc_hair}, the details on derivation of the macroscopic equations for nonlinear boundary conditions are given in the Supplementary materials. In  the next two subsections, $\lambda$ is an $O(1)$ quantity, with a different meaning in each subsection.

\subsubsection{Derivation of macroscopic equations in the case $\ve \ln(1/a_\ve) = \lambda$} \label{sec:sparse_first_nonuni}
Observe first that the $\ve^2 \delta^{-1}$ term from \eqref{final_outer_separation} becomes $O(\ve)$ here and therefore we do not expect it to impact on the leading order. The ansatz  \eqref{eqn:ansatz_dist} yields
\begin{equation}\label{outer_22}
\begin{aligned}
&\partial_t (u_{0} +  \ve u_1 +  \cdots)  =\Big(\frac 1{\ve^2}  \cA_0 + \frac 1 \ve \cA_1 + \cA_2\Big)( u_{0} + \ve u_{1} +  \cdots)  && \text{in } \Omega_L \times Y_{a_\ve},   \hspace{-0.5cm} \\
&D_u \Big(\frac 1 \ve  \nabla_{y} +  \nabla_{\hat x} \Big) \left( u_{0} + \ve u_{1} +  \cdots \right) \cdot \hat{\bf n } = - \kappa\,  e^{\frac{\lambda}{\ve}} \ve  ( u_{0} + \ve u_1 + \cdots) && \text{on } \Omega_L \times \Gamma_{a_\ve},   \hspace{-0.5cm}
 \end{aligned}
 \end{equation}
 where 
 \begin{equation}
 \begin{aligned} \nonumber
 \cA_0 v  \equiv \nabla_y \cdot (D_u \nabla_y v), \hspace{0.15cm} \cA_1 v  \equiv \nabla_y \cdot (D_u \nabla_{\hat x} v)+ \nabla_{\hat x} \cdot (D_u \nabla_y v), \hspace{0.15cm} \cA_2 v  \equiv \nabla_{x} \cdot (D_u \nabla_x v).
 \end{aligned}
 \end{equation}
 On the root surface we have
 $$
 D_u \Big(\frac 1 \ve  \nabla_{y} +  \nabla_x \Big) ( u_{0} + \ve u_{1} + \ve^{2} u_{2}  + \cdots ) \cdot {\bf n } = - \beta \left( u_{0} + \ve u_1 + \cdots \right) \; \;  \text{ on } \big\{ x_3 = 0 \big\} \times Y_{a_\ve}.
 $$
 As in Section~\ref{subsec_diffScaling} we analyze the behavior of solutions for $\|z\| = O(1)$ and $\|y\| = O(1)$ successively. The scaling $z= y/{a_\ve} = y\,  e^{\lambda/\ve}$ implies
\begin{equation}\label{inner_22}
\begin{aligned}
\partial_t u_0 + \ve \partial_t u_1 + \cdots =  \Big( \frac{e^{\frac{2 \lambda}{\ve}}} {\ve^2} \cB_0 +    \frac{ e^{\frac{ \lambda}{\ve}} }{\ve} \cB_1  +  \cA_2\Big) ( u_{0} + \ve u_{1}  +  \cdots )  &\;  \text{ in } \Omega_L \times Y_{1/a_\ve},  \hspace{-0.5 cm } \\
D_u\Big( \frac{e^{\frac \lambda \ve}}\ve  \nabla_{z} +  \nabla_{\hat x} \Big)( u_{0} + \ve u_{1} +  \cdots) \cdot \hat{\bf n } = - \kappa\,  \ve   e^{\frac{\lambda}{\ve}} (u_{0} + \ve u_1 + \cdots ) &\;  \text{ on } \Omega_L \times \partial B_1,  \hspace{-0.5 cm }
\end{aligned}
\end{equation}
where
\begin{equation}\label{mathcal_B}
 \cB_0 v  \equiv \nabla_z \cdot (D_u \nabla_z v), \quad \cB_1 v  \equiv \nabla_z \cdot (D_u \nabla_{\hat x} v)+ \nabla_{\hat x} \cdot (D_u \nabla_z v).
 \end{equation}
The inner approximations satisfy
\begin{equation}\label{eq_inner_22}
\begin{aligned}
& \nabla_{z}\cdot(D_u \nabla_z u_{j}^{I}) =0\;   \text{ in }   Y_{\infty} ,    \; && D_u \nabla_{z} u_{j}^{I} \cdot \hat{\bf n } =0 && \text{on }   \partial B_1 ,  \; j=0,1, \\
 &\nabla_{z}\cdot(D_u \nabla_z u_{j}^{I}) =0 \;  \text{ in }   Y_{\infty} ,  \;   && D_u   \nabla_{z} u_{j}^{I} \cdot \hat{\bf n } = - \kappa\,  u_{j-2}^{I} && \text{on }   \partial B_1,\;  j=2,3,4,
 \end{aligned}
  \end{equation}
which imply
\begin{equation}
\begin{aligned}
& u_{0}^{I} (t,x,z)= u_{0}^{I}(t, x), \quad u_{1}^{I} (t,x,z)= u_{1}^{I}(t, x), 
\\
& u_{j}^{I}(t,x,z) = \frac \kappa{D_u} u_{j-2}^{I}(t,x) \ln{(\|z\|)} + U^I_j(t,x), \qquad \text{ for } \; \; j=2,3, \\
%& u_{3}^{I}(t,x,z) = (\kappa/D_u) u_{1}^{I}(t,x) \ln{(\|z\|)} + F^I(t,x),\\
& u_{4}^{I}(t,x,z) = \frac \kappa{D_u} U^{I}_2(t,x) \ln{(\|z\|)} + U_4^I(t,x).
\end{aligned}
\end{equation}
Note that in this section we expand up to $O(\ve^4)$, because we wish to find a two-scale approximation valid up to $O(\ve^2)$ and compare it with full-geometry numerical simulation results in Section~\ref{sec_numerics}.
The outer approximations satisfy
\begin{equation}
 \nabla_{y}\cdot(D_u \nabla_y  u_{0}^O) = 0 \quad  \text{ in } Y, \; \; \;  \; \; \; \; u_{0}^O \; \;  \; \; \; Y-\text{periodic}, \label{eq_outer_220}
\end{equation}
so $u_{0}^{O}(t,x,y) = u_{0}^{O}(t, x)$ and therefore $u_{1}^{O}(t,x,y) = u_{1}^{O}(t, x)$ holds similarly. 
Since in the outer microscopic variables we have 
$$
\begin{aligned}
u_{2}^{I}(t,x,z)  %&\,  = \frac \kappa{D_u} u_{0}^{I}(t,x) \ln{(\|z\|)} + D^I(t,x)\\
&\,  = \frac \kappa{D_u} \Big[ u_{0}^{I}(t,x) \ln{(\|y\|)} +  u_{0}^{I}(t,x) \frac{\lambda}{\ve}\Big] +  U^I_2(t,x),
 \end{aligned}
 $$
to match logarithmic terms in outer and inner approximations we consider 
\begin{equation} \label{eqn:first_nonuni}
\begin{aligned}
\nabla_{y}\cdot (D_u \nabla_y u_{2}^O) =  \partial_t u_{0}^O - \nabla_{x}\cdot(D_u \nabla_x u_{0}^O) + 2 \pi \kappa u_{0}^I\,\delta (y) && \text{ in }  \; Y
\end{aligned}
\end{equation}
and $u_{2}^O$ is $Y$-periodic. The solvability condition for \eqref{eqn:first_nonuni} yields
\begin{equation} \label{eqn:first_nonuni_OI}
\partial_t  u_{0}^O = \nabla_{x}\cdot(D_u \nabla_x u_{0}^O)- 2 \pi  \kappa u_{0}^I  \quad \text{ for } \; x\in \Omega_L, \; t>0,
\end{equation}
and substituting this result  into \eqref{eqn:first_nonuni} gives
\begin{equation} \label{eqn:first_nonuni_2}
\begin{aligned}
\nabla_{y}\cdot (D_u \nabla_y u_{2}^O) =  2 \pi  \kappa  \, \big( \delta (y) - 1 \big) u_{0}^I\, \quad \text{ in } \; Y.
\end{aligned}
\end{equation}
Therefore
\begin{equation} \label{ansatz_u2}
\begin{aligned}
u_{2}^O(t,x,y) =  U_2^O(t,x) + 2 \pi (\kappa/D_u) u_{0}^I(t,x) \psi(y) \quad \text{ for } \; x\in \Omega_L, \; t >0,
\end{aligned}
\end{equation}
where $\psi(y)$ is a solution (unique up to a constant) of
\begin{equation} \label{cellproblem_ve2}
\begin{aligned}
& \Delta_y  \psi = \delta (y) - 1 && \text{ in } Y ,  \qquad 
& \psi \quad && Y \text{-periodic} .
\end{aligned}
\end{equation}
For similar reasons
\begin{equation} \label{eqn_outer_223c}
\begin{aligned}
\nabla_{y}\cdot (D_u \nabla_y u_{3}^O) & + 4 \pi \kappa  \nabla_y \psi \cdot \nabla_{\hat x} u_0^I
 \\ & =  \partial_t u_{1}^O - \nabla_{x}\cdot(D_u \nabla_x u_{1}^O) + 2\pi \kappa u_1^I \delta (y)  && \text{ in } Y 
\end{aligned}
\end{equation}
and $ u_{3}^O$ is $Y$-periodic. Due to the periodicity conditions imposed on $\psi$, we conclude
\begin{equation}\label{eq_u1_dep}
 \partial_t u_{1}^O =  \nabla_{x}\cdot(D_u \nabla_x u_{1}^O) - 2\pi \kappa \, u_1^I  \qquad \text{ for  }\;  x\in  \Omega_L, \; t>0.
\end{equation}
At the next order, we obtain
\begin{equation} \label{eqn_outer_4a}
\begin{aligned}
&  \nabla_{y}\cdot (D_u \nabla_y u_{4}^O) + \nabla_{y}\cdot (D_u \nabla_{\hat x} u_{3}^O) +  \nabla_{\hat x}\cdot (D_u \nabla_{y} u_{3}^O) \\
%&  \qquad \qquad =  \partial_t u_{2}^O - \nabla_{x}\cdot(D_u \nabla_x u_{2}^O) \\
& =    \partial_t U_2^O - \nabla_{x}\cdot(D_u \nabla_x U_2^O)   +
2\pi \frac{\kappa}{D_u}  \big[\partial_t u_{0}^I - \nabla_{x}\cdot(D_u \nabla_x u_{0}^I)\big] \psi(y),    &
\end{aligned}
\end{equation}
and $ u_{4}^O$ is $Y$-periodic, and to match the contribution from the inner solution we require
\begin{eqnarray} \label{eqn_outer_4b}
 \nabla_{y}\cdot (D_u \nabla_y u_{4}^O) + \nabla_{y}\cdot (D_u \nabla_{\hat x} u_{3}^O) +  \nabla_{\hat x}\cdot (D_u \nabla_{y} u_{3}^O)
  =    \partial_t U_2^O - \nabla_{x}\cdot(D_u \nabla_x U_2^O)  \nonumber \\  +
2\pi (\kappa/D_u)  \big[\partial_t u_{0}^I - \nabla_{x}\cdot(D_u \nabla_x u_{0}^I)\big] \psi(y) + 2\pi \kappa\,  U^I_2 \delta (y)  \quad  \text{ in } Y.
\end{eqnarray}
 The solvability of \eqref{eqn_outer_4b} implies
 \begin{equation}\label{eq_DO}
 \partial_t U_2^O = \nabla_{x}\cdot(D_u \nabla_x U_2^O) - 2\pi \frac \kappa {D_u}  \big[\partial_t u_{0}^I - \nabla_{x}\cdot(D_u \nabla_x u_{0}^I)\big]  \dashint_Y\psi(y)dy - 2\pi \kappa\,  U^I_2,
\end{equation}
in  $\Omega_L$ and for $t>0$. Thus we obtain the outer approximation
\begin{equation}\label{outer_der}
\begin{aligned}
 u_0^O(t,x) + \ve u_1^O(t,x) + \ve^2 \Big(  U_2^O(t,x) + 2 \pi (\kappa/D_u) u_{0}^I(t,x) \psi(y) \Big)  + \cdots, % \ve^3 u_3^O(t,x,y)  + \cdots, % \ve^4 u_4^O(t,x,y)+ \cdots,
 \end{aligned}
\end{equation}
 and the inner approximation
\begin{equation}\label{inner_der}
\begin{aligned}
& u_0^I(t,x) + \ve  u_1^I(t,x) + \ve^2 U_2^I(t,x) + \ve^2 (\kappa/D_u) u_0^I(t,x) \ln{(\|z\|)} +  \ve^3 U_3^I(t,x) \\
&+ \ve^3 (\kappa/D_u) u_1^I(t,x) \ln{(\|z\|)} + \ve^4 U_4^I(t,x) + \ve^4 (\kappa/D_u) U^I_2(t,x) \ln{(\| z \|)} + \cdots.
\end{aligned}
\end{equation}
Writing the latter in terms of the outer microscopic variables $y = a_\ve z$ gives 
\begin{equation}\label{inner_in_outer}
\begin{aligned}
%&   u_0^I(t,x) + \ve  u_1^I(t,x) + \ve^2 D^I(t,x) + \ve^2 \frac{\kappa}{D_u} u_0^I(t,x) \ln{(\|y\|)} + \ve^2 \frac{\kappa}{D_u} u_0^I(t,x) \frac{\lambda}{\ve}  \\
%& +  \ve^3 F^I(t,x) + \ve^3 \frac{\kappa}{D_u} u_1^I(t,x) \ln{(\|y\|)} + \ve^3 \frac{\kappa}{D_u} u_1^I(t,x) \frac{\lambda}{\ve} + \\
%& + \ve^4 H^I(t,x) + \ve^4 \frac{\kappa}{D_u} D^I(t,x) \ln{(\|y\|)} + \ve^4 \frac{\kappa}{D_u} D^I(t,x) \frac{\lambda}{\ve} + \cdots
%\\ &= 
&u_0^I(t,x) + \ve  \Big( u_1^I(t,x) + \lambda \frac{\kappa}{D_u} u_0^I(t,x) \Big) \\
&+ \ve^2 \Big( U^I_2(t,x) + \lambda \frac{\kappa}{D_u} u_1^I(t,x) +\frac{\kappa}{D_u} u_0^I(t,x) \ln{(\|y\|)} \Big) +\cdots %\\
%&+ \ve^3 \Big( U^I_3(t,x) + \lambda \frac{\kappa}{D_u} U^I_2(t,x) + \frac{\kappa}{D_u} u_1^I(t,x) \ln{(\|y\|)} \Big) +\cdots .
\end{aligned}
\end{equation}
Comparing \eqref{outer_der} with \eqref{inner_in_outer} at $O(1)$ and $O(\ve)$ yields matching conditions
\begin{equation}\label{matching_limit_1}
\begin{aligned} 
& u_{0}^{O} (t,x)= u_{0}^{I}(t,x)= u_0(t,x), 
\\
& u_{1}^{O} (t,x)= u_{1}^{I}(t,x) + \lambda (\kappa/D_u) u_0^I(t,x) = u_1^I(t,x) + \lambda (\kappa/D_u) u_0(t,x).
\end{aligned}
\end{equation}
Matching the inner and outer solutions at $O(\ve^2)$ yields
\begin{equation}\label{matching_limit_3}
\begin{aligned}
U_2^{O} (t,x) & = U_2^I(t,x) + \lambda\frac{\kappa}{D_u} \Big[ u_1^O(t,x)- \lambda \frac{\kappa}{D_u} u_0(t,x) \Big],
\end{aligned}
\end{equation}
where we have fixed the degree of freedom in the $\psi$, satisfying \eqref{cellproblem_ve2},  by setting
\begin{equation}\label{matching_limit_4}
\begin{aligned}
\lim\limits_{y \to 0} \big\{ 2 \pi \psi(y) - \ln{(\|y\|)} \big\} = 0.
\end{aligned}
\end{equation}

Since there are no root hairs in  $\Omega\setminus \Omega_L$,  in this part of the domain the macroscopic problem  is given by the original equations.
Thus, due to the continuity of concentration and fluxes on the interface $\partial \Omega_L \setminus \partial \Omega$ between the domain with root hairs and the domain without, we  substitute \eqref{matching_limit_1} into \eqref{eqn:first_nonuni_OI} and obtain the macroscopic problem
\begin{equation} \label{eqn:first_nonuni_final}
\begin{aligned}
&\partial_t  u_{0} = \nabla_{x}\cdot(D_u \nabla_x u_{0})- 2 \pi \kappa \, u_{0}\,  \chi_{\Omega_L}&& \text{ in } \Omega, \; t>0, \\
& u_0(0, x) = u_{\rm in}(x) && \text{ in } \Omega, \\
& D_u \nabla_x u_0\cdot {\bf n }  = 0   && \text{ on } \partial \Omega\setminus \Gamma_R,  \; t>0, \\
 & D_u \nabla_x u_0\cdot {\bf n }   =  - \beta u_0  && \text{ on } \Gamma_R, \; t>0,
\end{aligned}
\end{equation}
where $\Gamma_R = \overline \Omega \cap \{ x_3=0\}$ and $\chi_{\Omega_L}$ denotes the characteristic (or indicator) function of set $\Omega_L$. Notice that we obtain the same macroscopic equation as for $u_{0,0}$  in \eqref{eqn:homo_small_a_just_CE}. This is because with $\ve \ln(1/a_\ve) = O(1)$, the term $\ve^2  \delta^{-1} u_{0,0}(t,x) \psi_{-1}^O$ from ~\eqref{final_outer_separation} is promoted to $O(\ve)$ but does not affect the leading order.

Substituting the second relation in \eqref{matching_limit_1} into \eqref{eq_u1_dep} implies the following  problem  for the first order term  $u_1(t,x) = u_1^O(t,x)$:
\begin{equation} \label{eqn:first_nonuni_final_2}
\begin{aligned}
& \partial_t  u_{1} = \nabla_{x}\cdot(D_u \nabla_x u_{1})- 2 \pi \kappa \big\{ u_1  - \lambda (\kappa/D_u) u_0 \big\}  && \text{ in } \Omega_L, \; t>0, \\
& u_1(0,x) = 0 && \text{ in } \Omega_L, \\
& D_u \nabla_x u_1\cdot {\bf n }   = 0   && \text{ on } \partial \Omega_L\setminus \Gamma_R, \; t>0, \\
 & D_u \nabla_x u_1\cdot {\bf n }   =  - \beta u_1  && \text{ on } \Gamma_R, \; t>0.
 \end{aligned}
\end{equation}
Finally, we substitute \eqref{matching_limit_3} into \eqref{eq_DO} and obtain
\begin{equation}\label{eqn:first_nonuni_final_3}
\begin{aligned}
 \partial_t U_2^O & =  \nabla_{x}\cdot(D_u \nabla_x U_2^O) +  4\pi^2 \frac{\kappa^2}{D_u}  \, u_{0}  \,   \dashint_Y\psi(y)dy  && \\
 & - 2\pi \kappa \Big( U_2^O - \lambda \frac \kappa{D_u} \Big[ u_1(t,x)- \lambda \frac \kappa{D_u} u_0(t,x) \Big] \Big)   && \text{in } \Omega_L, \; t>0, \\
 & U_2^O(0,x)   =  -2 \pi (\kappa/D_u)  u_{\rm in}(x)     \dashint_Y\psi(y)dy  &&  \text{in } \Omega_L,  \\
 & D_u \nabla_x U_2^O\cdot {\bf n }   =  - 2 \pi \kappa   \nabla_x u_0\cdot {\bf n }   \,   \dashint_Y\psi(y)dy   &&  \text{on } \partial \Omega_L\setminus \partial \Omega, \\
 &  D_u \nabla_x U_2^O\cdot {\bf n }    =  - \beta U_2^O   && \text{on } \Gamma_R, \\
 &  D_u \nabla_x U_2^O\cdot {\bf n }    =  0   && \text{on } (\partial \Omega_L\cap \partial \Omega) \setminus \Gamma_R.
\end{aligned}
\end{equation}
 Then
\begin{equation}\label{u_2_first_scale}
u_2(t,x,y)  =  U_2^O(t,x) + 2 \pi (\kappa/D_u) u_{0}(t,x) \psi(y),
\end{equation}
where $\psi$ is the solution of the `unit cell' problem \eqref{cellproblem_ve2}  satisfying  \eqref{matching_limit_4}.

For the nonlinear boundary condition \eqref{bc_hair} on the surfaces of root hairs, together with the scaling assumption \eqref{scaling_K}, we follow the same calculations as above and obtain 
\begin{equation} \label{eqn:first_nonlin_1}
\begin{aligned}
&\partial_t  u_{0} = \nabla_{x}\cdot(D_u \nabla_x u_{0})- 2 \pi \kappa \, g(u_{0})\,  \chi_{\Omega_L}&& \text{ in } \Omega, \; t>0, \\
& u_0(0, x) = u_{\rm in}(x) && \text{ in } \Omega, \\
& D_u \nabla_x u_0\cdot {\bf n }  = 0   && \text{ on } \partial \Omega\setminus \Gamma_R,  \; t>0, \\
 & D_u \nabla_x u_0\cdot {\bf n }   =  - \beta u_0  && \text{ on } \Gamma_R, \; t>0,
\end{aligned}
\end{equation}
see the Supplementary materials for the derivation. Equations for higher order approximations can be obtained in the same way as in the case of linear boundary conditions on the hair surfaces.

\subsubsection{Derivation of macroscopic equations in the case $\ve^2 \ln(1/a_\ve) = \lambda$}
\label{sec:sparse_next_nonuni}
The relation  $\ve^2 \ln(1/a_\ve) = \lambda$  is equivalent to $a_\ve = e^{-\lambda/\ve^{2}}$. The formal asymptotic expansion \eqref{eqn:ansatz_dist} used in  equations \eqref{main_1}--\eqref{bc_hair} yields 
\begin{eqnarray}
%\begin{aligned}
\label{outer_22_nextscaling} 
\partial_t u_{0} +  \ve \partial_t u_1 +  \cdots  =\Big[\frac 1{\ve^2}  \cA_0 + \frac 1 \ve \cA_1 + \cA_2\Big]( u_{0} + \ve u_{1} + \cdots )  \text{ in } \Omega_L\times Y_{a_\ve} ,& \\ 
\Big[\frac 1 \ve D_u \nabla_{y} + D_u \nabla_{\hat x} \Big]  ( u_{0} + \ve u_{1}  + \cdots ) \cdot \hat{\bf n }  = - \kappa e^{\frac{\lambda}{\ve^2}} \ve \left( u_{0} + \ve u_1 + \cdots \right)  \text{ on } \Omega_L\times \Gamma_{a_\ve}.& \nonumber
%\end{aligned}
\end{eqnarray}
The rescaling $z=y/a_\ve$ implies
\begin{eqnarray}\label{inner_33}
 \partial_t  (u_{0} + \ve u_1 + \cdots)  = \Big[ \frac{e^{2 \lambda/ {\ve^2}} } {\ve^2} \cB_0 + \frac{ e^{\lambda/{\ve^2}} } \ve \cB_1 +  \cA_2 \Big] \left( u_{0} + \ve u_{1}   + \cdots \right)  && \;  \text{ in }  \Omega_L\times Y_{1/{a_\ve}},  \nonumber\\
\Big[ e^{\frac{\lambda}{\ve^{2}}}  \ve^{-1} D_u \nabla_{z}  + D_u \nabla_{\hat x} \Big] \left( u_0 + \ve u_{1}  + \cdots \right)  \cdot \hat{\bf n }  \qquad
\\
= -\ve \,  \kappa \, e^{\frac{\lambda}{\ve^2}} \left(u_{0}+ \ve u_1 + \cdots \right) && \;  \text{ on }  \Omega_L\times \partial B_1. \nonumber
 \end{eqnarray}
Then for the inner approximation we again obtain \eqref{eq_inner_22}. Following the same calculations as in subsection~\ref{sec:sparse_first_nonuni}, we obtain the outer approximation \eqref{outer_der} and the inner approximation \eqref{inner_der}; writing the latter in terms of the outer variables~$y$ yields
\begin{equation}\label{inner_in_outer_2}
\begin{aligned}
& \Big( u_0^I(t,x)  +  \lambda \frac{\kappa}{D_u} u_0^I(t,x) \Big) + \ve  \Big( u_1^I(t,x) + \lambda \frac{\kappa}{D_u} u_1^I(t,x) \Big)   \\
&\quad  + \ve^2  \Big( \frac{\kappa}{D_u} u_0^I(t,x) \ln{(\|y\|)} +  U_2^I(t,x) + \lambda \frac \kappa{D_u} U_2^I(t,x) \Big) + \cdots .
\end{aligned}
\end{equation}
Matching \eqref{outer_der} to \eqref{inner_in_outer_2} at $O(1)$ gives
\begin{equation}\label{matching_limit_1_1}
\begin{aligned}
u^O_{0} (t,x)= (1 + \lambda \kappa / D_u) u_{0}^{I}(t,x).
\end{aligned}
\end{equation}
Substituting  \eqref{matching_limit_1_1} into \eqref{eqn:first_nonuni_OI} yields the macroscopic problem for $u_0(t,x) = u_0^O(t,x)$:
\begin{equation} \label{eqn:homo_dist}
\begin{aligned}
&\partial_t u_{0} = \nabla_{x}\cdot(D_u \nabla_x u_{0})  - \frac{2 \pi \kappa}{1+\lambda \kappa/D_u } u_{0} \,  \chi_{\Omega_L} && \text{ in } \Omega,\; t>0, \\
& u_0(0,x) = u_{\rm in}(x) && \text{ in }  \Omega , \\
&  D_u \nabla_x u_0 \cdot {\bf n } =  - \beta u_0 \quad  &&\text{ on }   \Gamma_R, \; t>0, \\
& D_u \nabla_x u_0 \cdot {\bf n } = 0 \quad  &&\text{ on }  \partial \Omega\setminus \Gamma_R, \; t>0.
\end{aligned}
\end{equation}
Notice that \eqref{eqn:homo_dist} differs from the macroscopic equation in \eqref{eqn:homo_small_a_just_CE}, because the term $\ve^2  \delta^{-1} u_{0,0}(t,x) \psi_{-1}^O$ from ~\eqref{final_outer_separation} becomes $O(1)$ with the present scaling; for $\lambda=0$ we recover equation \eqref{eqn:homo_small_a_just_CE}, as expected.

Comparing \eqref{outer_der} with \eqref{inner_in_outer_2} at $O(\ve)$ gives
\begin{equation}\label{matching_limit_1_2}
\begin{aligned}
u_1^{O} (t,x)= (1 + \lambda \kappa/D_u) u_{1}^{I}(t,x).
\end{aligned}
\end{equation}
Substituting  \eqref{matching_limit_1_2} into \eqref{eq_u1_dep} implies that $u_1(t,x) = u_1^O(t,x)$ satisfies:
\begin{equation} \label{eqn:homo_dist_1}
\begin{aligned}
& \partial_t u_{1}  = \nabla_{x}\cdot(D_u \nabla_x u_{1}) - \frac{2 \pi \kappa}{1+\lambda  \kappa/D_u } u_{1}  && \text{ in } \Omega_L, \, t>0, \\
& u_1(0,x) = 0&& \text{ in }  \Omega_L, \\
&  D_u \nabla_x u_1 \cdot {\bf n } =  - \beta u_1 \quad  &&\text{ on }   \Gamma_R, \; t>0, \\
& D_u \nabla_x u_1 \cdot {\bf n } = 0 \quad  &&\text{ on }  \partial \Omega_L\setminus \Gamma_R, \; t >0,
\end{aligned}
\end{equation}
and we see that $u_1(t,x) = 0$ (for all $t>0$ and $x \in \Omega_L$) solves this problem. Similarly,
\begin{equation}\label{matching_limit_1_3}
\begin{aligned}
U_2^{O} (t,x)= \left( 1 + \lambda \kappa/D_u \right) U_2^I(t,x),
\end{aligned}
\end{equation}
together with condition  \eqref{matching_limit_4}  on function $\psi$. Using   \eqref{matching_limit_1_3}  in  equation \eqref{eq_DO} yields
 \begin{eqnarray}\label{eq_DO_new}
 %\begin{aligned}
 \partial_t U_2^O   = && \,  \nabla_x \cdot(D_u  \nabla_x  U_2^O)   +  \frac \kappa{D_u}    \frac{4 \pi^2 \kappa\, u_0}{( 1+\lambda  (\kappa/D_u))^2 }    \dashint_Y \hspace{-0.1 cm } \psi(y)dy - \frac{ 2\pi \kappa}{1+ \lambda (\kappa/D_u)} U_2^O  \; \text{ in } \Omega_L, \nonumber \\
 && U_2^O(0,x)   = -  \frac{2 \pi (\kappa/D_u) }{1+\lambda (\kappa/D_u)}  u_{in}(x) \, \dashint_Y\psi(y)dy   \hspace{ 3.8 cm} \text{ in } \Omega_L,  \nonumber \\
 && D_u \nabla_x U_2^O \cdot {\bf n }   = -  \frac{2 \pi \kappa }{1+\lambda (\kappa/D_u)}  \nabla_x u_{0} \cdot {\bf n } \,  \dashint_Y\psi(y)dy  \hspace{1.8 cm} \text{ on }  \partial \Omega_L\setminus \partial \Omega,  \\
&&  D_u \nabla_x U_2^O \cdot {\bf n }   =  - \beta U_2^O  \; \text{ on } \;  \Gamma_R, \qquad   D_u \nabla_x U_2^O \cdot {\bf n }   =  0 \; \text{ on } \; ( \partial \Omega_L\cap \partial \Omega)\setminus  \Gamma_R, \nonumber
 %\end{aligned}
\end{eqnarray}
for $t>0$.  Hence for $u_2(t,x,y) = u_2^O(t,x,y)$ we obtain
\begin{equation}\label{u_2_second_scale}
u_2(t,x,y) =    U_2^O(t,x) + \frac{2 \pi \kappa/D_u }{1+\lambda \kappa/D_u}  u_{0}(t,x) \, \psi(y),
\end{equation}
where $\psi$ is the solution of  `unit cell' problem \eqref{cellproblem_ve2}  satisfying \eqref{matching_limit_4}.

For the nonlinear boundary condition \eqref{bc_hair} (with the scaling assumption \eqref{scaling_K}), using the Taylor expansion of  $g(u_{\ve})$  and following the same procedure as above gives
\begin{equation} \label{eqn:homo_nonlin}
\begin{aligned}
&\partial_t u_{0}  = \nabla_{x}\cdot(D_u \nabla_x u_{0}) - 2\pi \kappa \,  g(h(u_{0})) \chi_{\Omega_L} && \text{ in } \Omega, \;  t>0, \\
& D_u \nabla_x u_{0} \cdot  {\bf n}  = - \beta u_0  && \text{ on }  \Gamma_R, \; t>0 \\
& D_u \nabla_x u_{0} \cdot  {\bf n} = 0 && \text{ on }  \partial \Omega \setminus \Gamma_R, \; t>0, \\
& u_0(0,x) = u_{\rm in}(x)  && \text{ in } \Omega,
\end{aligned}
\end{equation}
where $h=h(u_0)$ is the solution of  $u_0 =h  + \lambda \,  (\kappa/D_u) g(h)$,  see the Supplementary materials for the derivation. Similar result for an elliptic problem is obtained  in \cite{gomez2015, Jaeger_small,Jaeger_small_2}. Note that by choosing $g(u)=u$ we recover the effective equation from~\eqref{eqn:homo_dist}.

Assuming  boundary condition \eqref{eqn:MM_form}, we obtain  the effective equation
\begin{equation}
\label{eqn:MM_dist_CE_explicit}
\partial_t u_{0} = \nabla_{x}\cdot(D_u \nabla_x u_{0}) - 2 \pi \kappa \frac{  \big[\sqrt{(u_0-\tilde \kappa  - 1 )^2 + 4 u_0} + u_0 - \tilde \kappa - 1   \, \big]}{2+  \big[ \sqrt{(u_0- \tilde \kappa - 1)^2 + 4 u_0} + u_0 - \tilde \kappa - 1  \, \big]} \chi_{\Omega_L},
\end{equation}
for  $x\in \Omega$,  $t>0$, and $\tilde \kappa = \lambda  \kappa/ D_u$ (see the Supplementary materials for the derivation).

%%%%%%%%%%%%%%%%%%%%%%%%%%%%%%%%%%%%

\section{Rigorous derivation of macroscopic equations}\label{convergence}
In this section we give a rigorous  derivation of the macroscopic equations for \eqref{main_1}--\eqref{bc_hair}, \eqref{bc_rest},  \eqref{ic}.
To prove the convergence of solutions of multiscale problem to the solution of the corresponding macroscopic equations we first derive a priori estimates for $u_\ve$, uniform in~$\ve$. 
Due to the non-standard scale-relation  between the size and the period  of the microscopic structure considered here,  i.e.\ $a_\ve=  r_\ve/ \ve \ll 1$, we need to derive modified trace estimates and extension results, taking into account the difference in the scales between $\ve$ and $r_\ve$.  In the derivation of  the trace estimates  and extension results we follow similar ideas  as in \cite{Donato1988} with  small modifications due to the cylindrical  microstructure of~$\Omega^\ve$.

We define the   following domains, for some  $0< \rho < 1/2$, 
$$
\begin{aligned} 
& \Omega^\ve_0 =\bigcup_{\xi\in  \Xi^\ve}   \ve (\overline B_{\rho } + \xi)\times (0,L),
 \; \;  \;   \widetilde \Omega^\ve = \Omega\setminus \Omega^\ve_0, \; \;  \;  \widetilde \Omega^\ve_L = \Omega_L \setminus \Omega^\ve_0,  \;\;  \; \Omega^\ve_L = \Omega^\ve \cap \Omega_L,
\\
& \Gamma^\ve_0 = \bigcup_{\xi\in \Xi^\ve}  \ve(\partial B_{\rho } + \xi)\times (0,L), \quad  \Lambda^\ve_0 = \bigcup_{\xi\in \Xi^\ve}  \ve(\partial B_{\rho } + \xi).
\end{aligned} 
$$

\begin{lemma}\label{trace_estim}
For $v \in W^{1,p}(\Omega^\ve)$, with $1\leq p < \infty$, we have  the following trace inequality
\begin{equation}\label{trace}
\frac{\ve^2}{r_\ve} \|v\|^p_{L^p(\Gamma^\ve)} \leq \mu \left[  \|v\|^p_{L^p(\widetilde \Omega^\ve)} + \ve^p \|\nabla v\|^p_{L^p(\widetilde \Omega^\ve)} \right],  \qquad  \mu \text{- independent of }  \ve, r_\ve. 
\end{equation}
\end{lemma}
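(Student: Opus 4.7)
The estimate is local in the microscopic geometry: both sides of~\eqref{trace} split into contributions from disjoint cells indexed by $\xi\in\Xi^\ve$, and the cylindrical symmetry of the problem allows the argument to be reduced to a two-dimensional radial estimate on the transverse cross-section, which is then integrated over $x_3\in(0,L)$. My plan is to fix one cell, use planar polar coordinates $(r,\theta)$ centred at $\ve\xi$ in the transverse directions, derive a per-cell trace estimate on the annular region $A^\xi_\ve := (B_{\ve/2}\setminus\overline B_{r_\ve})(\ve\xi)\times(0,L)$, and then sum over $\xi\in\Xi^\ve$.

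\textbf{Key steps.} First, I would apply the radial fundamental theorem of calculus to $v$ viewed as a function of $r$ at fixed $(\theta,x_3)$:
\begin{equation*}
v(r_\ve,\theta,x_3) \;=\; v(r,\theta,x_3) - \int_{r_\ve}^{r}\partial_s v(s,\theta,x_3)\,ds, \qquad r\in[r_\ve,\ve/2].
\end{equation*}
Second, I would raise to the power $p$ using $|a+b|^p\leq 2^{p-1}(|a|^p+|b|^p)$ and apply H\"older's inequality to the radial integral with the weight $s^{1/p}$, so that the Jacobian factor $s\,ds$ is built into the resulting gradient term. Third, multiplying by $r$ and integrating in $r\in[r_\ve,\ve/2]$ produces an $O(\ve^2)$ factor on the left, while the right-hand side becomes a two-dimensional integral over the cross-sectional annulus. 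Integration in $\theta\in[0,2\pi)$ and in $x_3\in(0,L)$, together with multiplication by $r_\ve$, converts the angular mean of $|v(r_\ve,\cdot)|^p$ into the intrinsic trace norm $\|v\|^p_{L^p((\partial B_{r_\ve}+\ve\xi)\times(0,L))}$, and the gradient term acquires the factor $\ve^p$. Summation over $\xi\in\Xi^\ve$ and identification of $\bigcup_\xi A^\xi_\ve\times(0,L)$ with the appropriate subset of the domain appearing on the right-hand side of~\eqref{trace} then delivers the claimed inequality.

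\textbf{Main obstacle.} The technical heart of the argument is the weighted H\"older step: the $s^{1/p}$ weight produces the sharp scaling $\ve^p$ in front of the gradient norm, whereas an unweighted bound would yield only the weaker factor $\ve^{p+1}/r_\ve = \ve^p/a_\ve$, which would be useless in the limit $a_\ve\to 0$. Equally important is that the radial integration extends out to the inscribed-disk radius $\ve/2$ rather than stopping at a multiple of $r_\ve$: this is what pulls the outer scale $\ve$ into the prefactor of the gradient term, and is what converts the natural ratio $r_\ve/\ve^2$ of hair perimeter to cell area into the factor $\ve^2/r_\ve$ appearing on the left of~\eqref{trace}. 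The only modification relative to the classical argument of~\cite{Donato1988} is the handling of the $x_3$-direction, which is immediate since the microstructure is translation-invariant in $x_3$ and the $x_3$-derivative enters symmetrically with the transverse derivatives in $\|\nabla v\|_{L^p}$.
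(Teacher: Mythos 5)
Your strategy (per-cell reduction, polar coordinates in the cross-section, radial fundamental theorem of calculus from $r_\ve$ out to the cell boundary, then summation over $\xi\in\Xi^\ve$) is the natural direct route and differs from the paper's proof, which instead rescales a fixed trace inequality posed on the reference domain $Y\setminus \overline B_\rho$. The genuine gap sits exactly at the step you call the technical heart. Writing $\int_{r_\ve}^r|\partial_s v|\,ds=\int_{r_\ve}^r s^{-1/p}\,\big(s^{1/p}|\partial_s v|\big)\,ds$ and applying H\"older produces the prefactor $\big(\int_{r_\ve}^r s^{-1/(p-1)}ds\big)^{p-1}$, which for $p=2$ equals $\ln(r/r_\ve)$ and for $p<2$ behaves like $r_\ve^{\,p-2}$. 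Propagating this through your $r$-averaging over $[r_\ve,\ve/2]$ and the final multiplication by $\ve^2/r_\ve$, the gradient term acquires the coefficient $\ve^{2}\ln(\ve/r_\ve)$ when $p=2$ (and $\ve^{2}r_\ve^{\,p-2}\gg\ve^p$ when $p<2$), not $\ve^{p}$. Under the scalings of interest this coefficient is $O(1)$ (if $\ve^2\ln(1/a_\ve)=\lambda$) or $O(\ve)$ (if $\ve\ln(1/a_\ve)=\lambda$), never $O(\ve^2)$. This loss is not an artifact of your method: with the coefficient $\ve^2$ the inequality is false for $p=2$. Take, in each cell, $v(\hat x)=\ln\!\big(c\,r_\ve/|\hat x-\ve\xi|\big)/\ln c$ for $r_\ve\le|\hat x-\ve\xi|\le c\,r_\ve$ and $v=0$ elsewhere, with $c=a_\ve^{-1/2}$. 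Then $\tfrac{\ve^2}{r_\ve}\|v\|^2_{L^2(\Gamma^\ve)}\approx 2\pi L|G|$ stays bounded away from zero, while $\|v\|^2_{L^2(\Omega^\ve)}=O(a_\ve)$ and $\ve^2\|\nabla v\|^2_{L^2(\Omega^\ve)}=O(1/\ln c)\to 0$; a piecewise-linear cut-off supported in $B_{2r_\ve}$ rules out $p=1$ in the same way. So no weighting in the H\"older step can rescue the constant $\ve^p$; the correct sharp coefficient for $p=2$ is $\ve^2\ln(\ve/r_\ve)$, which is what your computation actually delivers and is still sufficient for the a priori estimates in the regimes considered, provided the gradient contribution is tracked as $O(1)$ rather than $O(\ve^2)$.

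A second, independent mismatch concerns the domain on the right-hand side. Your annuli $(B_{\ve/2}\setminus\overline B_{r_\ve})(\ve\xi)\times(0,L)$ are not contained in $\widetilde\Omega^\ve=\Omega\setminus\Omega_0^\ve$: the latter excludes precisely the inner regions $r_\ve<|\hat x-\ve\xi|<\ve\rho$ through which your radial integration must pass. Hence, even granting the constant, your argument proves the estimate with $\Omega^\ve$ (or $\Omega_L^\ve$) on the right, not with $\widetilde\Omega^\ve$; and the version with $\widetilde\Omega^\ve$ cannot be recovered by any method, since a function supported in $\Omega_0^\ve$ and equal to $1$ near the hair surfaces makes the left side of \eqref{trace} of order one and the right side zero. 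By contrast, the paper's proof keeps the right-hand side on $\ve Y\setminus\overline B_{\ve\rho}$, but it does so by applying the dilation $\hat y\mapsto (r_\ve/\rho)\hat y$ to the boundary integral and the different dilation $\hat y\mapsto\ve\hat y$ to the volume integral of the same reference inequality; the boundary quantity it thereby controls is the trace of $v$ on $\Gamma_0^\ve$ (radius $\ve\rho$) transported radially onto $\Gamma^\ve$, not the trace of $v$ on $\Gamma^\ve$ itself. Your approach at least targets the correct boundary object; the fix is to state and prove the lemma with the full perforated domain on the right and with the logarithm retained in the gradient coefficient.
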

\begin{proof}
For $v \in W^{1,p}(Y_\ast\times (0,L))$ using a  trace inequality \cite{Evans_2010} in  $Y_\ast= Y \setminus \overline B_\rho$  (and an approximation of $v$ by  smooth functions) yields
\begin{equation}\label{estim_1}
\int_{\partial B_\rho} |v|^p d\gamma_{\hat y} \leq \mu_1 \int_{Y_\ast} \big( |v|^p  + |\nabla_{\hat y} v|^p \big) d\hat y,
\end{equation}
with $\hat y =(y_1, y_2)$ and  for a.a.\ $y_3 \in (0,L)$. Scaling by $r_\ve/\rho$  in the boundary integral  and by $\ve$ in the volume integral in \eqref{estim_1}  we obtain 
$$
\frac \rho {r_\ve} \int_{\partial B_{r_\ve }}  |v|^p d\hat\gamma^\ve \leq \mu_1 \frac 1{\ve^2} \int_{\ve Y_\ast} \big( |v|^p  + \ve^p  |\nabla_{\hat x} v|^p \big)d \hat x
$$
for  $x_3 \in (0,L)$, where  $\hat x = (x_1,x_2)$, $x_1 =\ve y_1$, $x_2 = \ve y_2$, $x_3 = y_3$. Adopting the changes of variables $x_j \to x_j + \ve \xi$ in the integral over $\ve Y_\ast$ and $z_j \to  z_j  + \ve \xi$ in the boundary integral,  with $j=1,2$,  and multiplying  by $\ve^2$, implies
$$
\frac {\ve^2}{r_\ve} \int_{\partial B_{r_\ve }+\ve \xi}  |v|^p d\hat\gamma^\ve \leq \mu_2  \int_{\ve Y_\ast + \ve \xi} \big( |v|^p  + \ve^p  |\nabla_{\hat x} v|^p \big)d \hat x .
$$
 Integrating the last inequality with respect to $x_3$  over $(0,L)$ and  summing up over $\xi \in \Xi^\ve$  imply the estimate~\eqref{trace}.
\end{proof}

\begin{lemma}[Extension]\label{extension_lemma}
For $v\in H^1(\Omega^\ve)$ there exists an extension $P_\ve v \in H^1(\Omega)$ such that
\begin{equation}\label{extension}
\|P_\ve v\|_{L^2(\Omega)} \leq \mu \| v\|_{L^2(\Omega^\ve)}, \quad   \|\nabla P_\ve v\|_{L^2(\Omega)} \leq \mu \|\nabla  v\|_{L^2(\Omega^\ve)}, 
\end{equation}
with a constant $\mu$ independent of $\ve$.
\end{lemma}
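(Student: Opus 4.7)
The plan is to construct $P_\ve$ in an explicit, local way, cell-by-cell in the layer $\Omega_L = G\times(0,L)$ containing the hairs, and to set $P_\ve v := v$ on $\Omega\setminus\Omega_L = G\times(L,M)$, where $\Omega^\ve$ is not perforated. Since the cylindrical holes are translation-invariant in $x_3$, I would treat $x_3$ as a parameter and reduce the construction to a family of planar extensions across the discs $B_{r_\ve}+\ve\xi$. In each cell $\ve(Y+\xi)$ with $\xi\in\Xi^\ve$, I would keep $P_\ve v = v$ outside the intermediate disc $\ve(B_\rho+\xi)\times(0,L)$ (using the same $\rho$ that appears in $\Omega^\ve_0$ and $\Gamma^\ve_0$), and inside the annular collar $\ve(B_\rho+\xi)\setminus(B_{r_\ve}+\ve\xi)$ times $(0,L)$ apply a local 2D extension across the hole. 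Since the extension coincides with $v$ across the outer interface $\ve(\partial B_\rho+\xi)\times(0,L)$, the patched-together function belongs to $H^1(\Omega)$.

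The technical core is a reference-cell extension lemma: there is a bounded linear operator $\mathcal E_a : H^1(B_\rho\setminus\overline B_a)\to H^1(B_\rho)$ whose operator norm is independent of $a$ for $a\in(0,\rho/2)$. I would prove this by splitting $w\in H^1(B_\rho\setminus\overline B_a)$ as $w = m_a + (w-m_a)$, where $m_a$ is the mean value of the trace of $w$ on $\partial B_a$; extending the constant $m_a$ trivially to $B_a$; and extending the zero-mean remainder into $B_a$ by its harmonic extension. The Dirichlet energy of this harmonic extension is controlled by the scale-invariant $\dot H^{1/2}(\partial B_a)$ seminorm of the trace, which after rescaling to the fixed reference annulus $B_2\setminus\overline B_1$ (via $y \mapsto y/a$) is bounded by $\|\nabla w\|_{L^2(B_{2a}\setminus\overline B_a)}$ with an $a$-independent constant. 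An analogous Poincaré argument on the same reference annulus bounds $\|m_a\|_{L^2(B_a)}$ by $\|w\|_{L^2(B_{2a}\setminus\overline B_a)} + a\,\|\nabla w\|_{L^2(B_{2a}\setminus\overline B_a)}$.

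Having the reference extension with constants independent of $a$, I would rescale back to each physical cell via $\hat y = (\hat x - \ve\xi)/\ve$ (so that $a$ becomes $a_\ve = r_\ve/\ve$), define $P_\ve v(\cdot,x_3)$ cellwise by $\mathcal E_{a_\ve}$ for every $x_3 \in (0,L)$, and assemble the local bounds into a global estimate. Because $\mathcal E_{a_\ve}$ is linear and acts only in $\hat x = (x_1,x_2)$, commuting with $\partial_{x_3}$, the same operator controls the $x_3$-derivative, so slice-by-slice bounds integrate to give \eqref{extension}. Continuity across the interface $x_3 = L$ is automatic: for $x_3$ slightly below $L$ the extension into each hole coincides with the trace from above in the limit.

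The main obstacle is the uniformity of the reference extension as $a\to 0$; once this is secured, the cellwise-to-global passage is routine book-keeping, disjoint collars $\ve(B_\rho+\xi)\times(0,L)$ guaranteeing that summing the local estimates yields constants independent of $\ve$. This strategy mirrors the extension constructions of \cite{Cioranescu_thiny, Donato1988} adapted to the cylindrical microstructure considered here.
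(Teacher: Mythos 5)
Your overall strategy coincides with the paper's: extend locally across each hair, keep $P_\ve v=v$ away from the hairs, and obtain constants uniform in $\ve$ and $a_\ve$ by rescaling a neighbourhood of the hole of size comparable to the hole onto a fixed reference configuration, where the two--dimensional Dirichlet energy is scale invariant. The purely planar part of your reference lemma (harmonic extension of the zero--mean trace, Dirichlet energy controlled by the scale--invariant $\dot H^{1/2}$ seminorm of the trace, mean value handled separately) is a sound alternative to the paper's route, which instead takes a standard extension operator on the fixed annulus $S=B_{2\rho}\setminus\overline B_\rho$ satisfying \eqref{extend_1} (citing \cite{cioranescu_1999}) and rescales it by $r_\ve/\rho$.

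There is, however, a genuine gap in the step ``$\mathcal E_{a}$ acts only in $\hat x$ and commutes with $\partial_{x_3}$, so the same operator controls the $x_3$-derivative.'' Your $\mathcal E_a$ is a trace--based operator: its values in $B_a$ are determined by $w|_{\partial B_a}$, and all of its bounds require $w(\cdot,x_3)$ to be $H^1$ in the planar variables. Writing $\partial_{x_3}\mathcal E_a w=\mathcal E_a \partial_{x_3}w$ is therefore of no help, since $\partial_{x_3}w$ is only $L^2$ in $\hat x$ and has no trace on $\partial B_a$. In fact the slice--wise 2D harmonic extension is \emph{unbounded} from $H^1$ of the collar cylinder into $H^1$ of the solid cylinder: boundary data $e^{i\theta}\sin(mx_3)$ on $\partial B_a\times(0,L)$ has $H^{1/2}$-norm squared of order $am$ for $am\ge 1$ (so it is the trace of a collar function of $H^1$-energy of that order), while its slice--wise harmonic extension $(r/a)\,e^{i\theta}\sin(mx_3)$ has $\partial_{x_3}$-energy of order $a^2m^2$; the ratio $am$ blows up as $m\to\infty$. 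This is precisely why the paper's proof uses a genuinely three--dimensional extension $\hat v$ on the fixed Lipschitz shell $S\times(0,L)$ and only afterwards rescales in the $\hat x$-variables: the anisotropic rescaling then yields $\|\partial_{x_3}P_\ve v\|^2_{L^2(B_{r_\ve}\times(0,L))}\le (r_\ve/\rho)^2\,\mu\,\|\nabla_y \hat v\|^2_{L^2}\le\mu\|\nabla_x v\|^2_{L^2}$, the factor $(r_\ve/\rho)^2$ absorbing the cost of the $x_3$-derivative. Two smaller points: (i) your $L^2$ bound inevitably carries the term $a\|\nabla w\|_{L^2(B_{2a}\setminus\overline B_a)}$, i.e.\ $r_\ve\|\nabla v\|_{L^2}$ after rescaling, so you obtain $\|P_\ve v\|_{L^2(\Omega)}\le\mu\big(\|v\|_{L^2(\Omega^\ve)}+r_\ve\|\nabla v\|_{L^2(\Omega^\ve)}\big)$ rather than the first estimate of \eqref{extension} as stated---harmless for every use of the lemma, but strictly weaker; (ii) continuity across the top discs $B_{r_\ve}\times\{L\}$ is not ``automatic'': the trace of the laterally constructed extension on the top of each hair need not coincide with the trace of $v$ from above, so a matching or interpolation over a thin layer below $x_3=L$ is required (a point over which the paper's own proof is also silent).
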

\begin{proof}
Consider $\tilde S = B_{2\rho}$,  $S = \tilde S \setminus \overline B_\rho$, $\tilde S_L = \tilde S \times (0,L)$,  and $S_L =S \times (0, L)$.  By a standard extension result for $v \in H^1(S\times (0,L))$ there exists $\hat v \in H^1( \tilde S  \times (0,L))$: % such that
\begin{equation}\label{extend_1}
\begin{aligned}
& \|\hat v\|_{L^2(\tilde S\times(0,L)) }\leq \mu_1 \|v\|_{L^2(S\times (0, L))},
\quad \|\nabla \hat v\|_{L^2(\tilde S\times (0,L))} \leq \mu_1 \|\nabla v\|_{L^2(S\times (0,L))}, \\
 & \|\nabla_{\hat x} \hat v(\cdot, x_3)\|_{L^2(\tilde S)} \leq \mu_1 \|\nabla_{\hat x} v(\cdot, x_3)\|_{L^2(S)} \quad \text{ for } \; x_3 \in (0,L) \text{ and } \hat x = (x_1, x_2), 
\end{aligned}
\end{equation}
see e.g.\ \cite{cioranescu_1999}.  Then for $v\in H^1(Y^\ve_\ast)$, where $Y^\ve_\ast = \ve Y\setminus \overline B_{r_\ve }$, consider an extension  $P_\ve: H^1(Y^\ve_\ast\times (0, L)) \to H^1(\ve Y\times (0, L))$ such that
$P_\ve v = v$ in $Y_\ast^\ve\times (0,L)$ and $P_\ve v(x) = \hat v(\rho \hat x/ r_\ve, x_3)$  in $B_{r_\ve }\times (0,L)$.
The estimates \eqref{extend_1} then give
$$
\begin{aligned}
& \int_{B_{r_\ve }\times (0,L)} \| P_\ve v\|^2 d x =  \frac{r_\ve^2}{\rho^2}   \int_{B_{\rho}\times (0,L)} \|P_\ve v\|^2 d y \leq  \frac{r_\ve^2}{\rho^2} \int_{\tilde S_L} \|P_\ve v\|^2 d y  \\
&  \leq \mu_1 \frac{ r_\ve^2}{\rho^2}  \int_{S_L} \|P_\ve v\|^2 d y
\leq \mu_1 \int_{\frac{r_\ve}\rho S\times (0, L)} \|P_\ve v\|^2 d x \leq \mu_1 \int_{Y_\ast^\ve\times (0,L)} \| P_\ve v\|^2 d x
\end{aligned}
$$
and
$$
\begin{aligned}
& \int_{B_{r_\ve }\times (0,L)} \|\nabla_{\hat x} P_\ve v\|^2 d x =  r_\ve^2 r_\ve^{-2}  \int_{B_{\rho}\times (0,L)} \|\nabla_{\hat y} P_\ve v\|^2 d y \leq   \int_{\tilde S_L} \|\nabla_{\hat y} P_\ve v\|^2 d y  \\
&  \leq \mu_1   \int_{S_L} \|\nabla_{\hat y} P_\ve v\|^2 d y
\leq \mu_1  \int_{\frac {r_\ve}\rho S\times (0,L)} \|\nabla_{\hat x} P_\ve v\|^2 d x 
 \leq \mu_1 \int_{Y^\ve_\ast\times (0,L)} \|\nabla_{\hat x}  P_\ve v\|^2 d x,
\end{aligned}
$$
where the constant $\mu_1$ is  independent of $r_\ve$ and $\ve$, and $x_j = (r_\ve/ \rho)  y_j$ for $j=1,2$, $x_3 = y_3$.
For the derivative with respect to $x_3$ we have
$$
\begin{aligned}
 & \int_{B_{r_\ve}\times (0,L)} \|\partial_{x_3} P_\ve v\|^2 d x =\frac{ r_\ve^2}{\rho^2}   \int_{B_\rho \times (0,L)} \|\partial_{y_3} P_\ve v\|^2 d y \leq  \frac{r_\ve^2}{\rho^2}   \int_{\tilde S_L} \|\partial_{y_3} P_\ve v\|^2 d y \\
&   \leq \mu_1   \frac{r_\ve^2 }{\rho^2}   \int_{S_L} \|\nabla_{y} P_\ve v\|^2 d y
\leq \mu_1 \int_{\frac{r_\ve}\rho S\times (0,L)} \|\nabla_{x} P_\ve v\|^2 d x  \leq \mu_1 \int_{Y^\ve_\ast \times (0,L)} \|\nabla_{x} P_\ve v\|^2 d x.
\end{aligned}
$$
Combining the estimates above with the fact that  $P_\ve v = v$ in $Y_\ast^\ve\times (0,L)$  yields
$$
\|P_\ve v\|_{L^2(\ve Y\times (0,L)) }\leq \mu \|v\|_{L^2(Y^\ve_\ast \times (0,L))}, \quad \|\nabla P_\ve v\|_{L^2(\ve Y \times (0,L))} \leq \mu \|\nabla v\|_{L^2(Y^\ve_\ast\times (0,L))}.
$$
Considering  the last inequalities  for  $Y^\ve_\ast + \ve \xi$ and summing up  over $\xi\in \Xi^\ve$ imply  the extension and estimates stated in lemma.
\end{proof}
%%%%%%%%%%%%%%%%%%%%%%%%%%

\begin{lemma}  Assume   $g$ is  continuously differentiable on $[-\tilde \varsigma, \infty)$ for some $\tilde \varsigma >0$,  and $g(\eta) = g_1(\eta) + g_2(\eta)$, where  $g_1(\eta) \geq 0$ for $\eta\geq 0$, with $g_1(0) = 0$, and $g_2$ is sublinear, with $g_2(0) \leq 0$,  initial condition $u_{\rm in} \in H^1(\Omega)$, with  $0 \leq u_{\rm in } \leq u_{\rm max}$,  $K(a_\ve) = \kappa/ a_\ve$,  with $\kappa > 0$, and $\beta \geq 0$. Then   solutions  $u_\ve$ of  \eqref{main_1}--\eqref{bc_hair}, \eqref{bc_rest},  \eqref{ic} satisfy the following a priori estimates
\begin{equation}\label{apriori}
\begin{aligned}
\|u_\ve\|^2_{L^\infty(0,T; L^2(\Omega^\ve))} + \|\nabla u_\ve\|^2_{L^2((0,T)\times \Omega^\ve)}  + \beta \|u_\ve\|^2_{L^2((0,T)\times\Gamma^\ve_{R})} \\
 +\frac{\ve^2} {r_\ve} \int_{\Gamma^\ve_T} g_1(u_\ve) u_\ve \, d\gamma^\ve dt +\|\partial_t u_\ve \|^2_{L^2((0,T)\times\Omega^\ve)}&\leq \mu, 
  \\
  \|(u_\ve - M e^{m t})^+\|^2_{ L^2((0,T)\times\Omega^\ve)} &\leq \mu \ve, 
\end{aligned}
\end{equation} 
where $M, m>0$ and  the constant $\mu$ is independent of $\ve$ and of $r_\ve = \ve \, a_\ve$.
\end{lemma}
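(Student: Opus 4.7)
The strategy is to derive all bounds by testing the weak formulation \eqref{main_weak} with carefully chosen functions and converting every integral over $\Gamma^\ve$ into a bulk integral via the trace inequality of Lemma~\ref{trace_estim}, whose $\ve$--uniform constant is tailored to the scaling $r_\ve = \ve a_\ve \ll \ve$.

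First, I would establish non-negativity of $u_\ve$ (more precisely $u_\ve \geq -\tilde\varsigma$) by inserting $\phi = (u_\ve + \tilde\varsigma)^-$ into \eqref{main_weak}, exploiting that $g$ is monotone non-decreasing on $[-\tilde\varsigma,\infty)$ and that $u_{\rm in} \geq 0$. With this in hand, I would test with $\phi = u_\ve$, producing
\[
\tfrac12\tfrac{d}{dt}\|u_\ve\|^2_{L^2(\Omega^\ve)} + D_u\|\nabla u_\ve\|^2_{L^2(\Omega^\ve)} + \beta\|u_\ve\|^2_{L^2(\Gamma_R^\ve)} + \tfrac{\ve^2\kappa}{r_\ve}\int_{\Gamma^\ve} g(u_\ve)u_\ve\, d\gamma^\ve = 0.
\]
Splitting $g = g_1 + g_2$, the $g_1$-contribution is non-negative and is kept on the left, yielding the bound on $\tfrac{\ve^2}{r_\ve}\int g_1(u_\ve)u_\ve\, d\gamma^\ve dt$ in \eqref{apriori}. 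The $g_2$-contribution is dominated via sublinearity by $C\tfrac{\ve^2}{r_\ve}\int_{\Gamma^\ve}(1+u_\ve^2)\,d\gamma^\ve$, which by Lemma~\ref{trace_estim} is in turn controlled by $\mu_1\big(\|u_\ve\|^2_{L^2(\widetilde\Omega^\ve)} + \ve^2\|\nabla u_\ve\|^2_{L^2(\widetilde\Omega^\ve)} + 1\big)$. The gradient term is absorbed into $D_u\|\nabla u_\ve\|^2$, and a standard Gr\"onwall argument produces the first three terms of \eqref{apriori}.

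For the $\partial_t u_\ve$-bound, I test with $\phi = \partial_t u_\ve$ to obtain
\[
\|\partial_t u_\ve\|^2_{L^2(\Omega^\ve)} + \tfrac{D_u}{2}\tfrac{d}{dt}\|\nabla u_\ve\|^2_{L^2(\Omega^\ve)} + \tfrac{\beta}{2}\tfrac{d}{dt}\|u_\ve\|^2_{L^2(\Gamma_R^\ve)} + \tfrac{\ve^2\kappa}{r_\ve}\tfrac{d}{dt}\!\int_{\Gamma^\ve}\! G(u_\ve)\,d\gamma^\ve = 0,
\]
where $G$ is an antiderivative of $g$ with $G(0)=0$. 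Integrating in time, the initial-data contributions from the $\nabla u_{\rm in}$, $u_{\rm in}|_{\Gamma_R^\ve}$ and $G(u_{\rm in})|_{\Gamma^\ve}$ boundary integrals are uniformly bounded by the $H^1$-regularity of $u_{\rm in}$ combined with Lemma~\ref{trace_estim} applied to $u_{\rm in}$ (and to $u_{\rm in}^2$), since $|G(\eta)| \leq C(1+\eta^2)$ on the range of $u_{\rm in}$.

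Finally, for the $\mu\ve$ estimate, I would choose $M \geq u_{\rm max}$ and $m>0$ large enough that $w = Me^{mt}$ is a strict bulk supersolution and that $g(Me^{mt})$ is close to the nonnegative regime for all $t \in (0,T)$. Testing the equation satisfied by $u_\ve - w$ with $(u_\ve - w)^+$, the nontrivial boundary integral $-\tfrac{\ve^2\kappa}{r_\ve}\int_{\Gamma^\ve}g(u_\ve)(u_\ve-w)^+ d\gamma^\ve$ is decomposed as a monotone part $-\tfrac{\ve^2\kappa}{r_\ve}\int[g(u_\ve)-g(w)](u_\ve-w)^+$ (non-positive by monotonicity of $g$) plus a residual $-\tfrac{\ve^2\kappa}{r_\ve}\int g(w)(u_\ve-w)^+$. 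Using Lemma~\ref{trace_estim} on $(u_\ve-w)^+$, the boundedness of $g(w)$, and Young's inequality, the residual is bounded by $C\ve\big(\|(u_\ve-w)^+\|_{L^2(\Omega^\ve)} + \ve\|\nabla(u_\ve-w)^+\|_{L^2(\Omega^\ve)}\big)$, which together with the already-controlled gradient contribution and Gr\"onwall yields the claimed $O(\ve)$ bound. The principal obstacle is precisely here: extracting the factor $\ve$ rather than $O(1)$ requires that the trace inequality in Lemma~\ref{trace_estim} be applied after the monotone-part cancellation, so that only the residual (proportional to the small $\ve^2/r_\ve$ weight times the bounded measure of $\Gamma^\ve$, together with the diffusion scale) enters the right-hand side.
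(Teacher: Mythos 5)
Your treatment of the first four estimates follows the paper's route almost exactly (test with $u_\ve$, keep the $g_1$-term on the left, control the $g_2$-term through Lemma~\ref{trace_estim} with $p=2$, absorb the $\ve^2\|\nabla u_\ve\|^2$ contribution into the diffusion term, Gr\"onwall; then test with $\partial_t u_\ve$). One small caveat there: establishing only $u_\ve \ge -\tilde\varsigma$ is not enough to keep $\frac{\ve^2}{r_\ve}\int_{\Gamma^\ve} g_1(u_\ve)u_\ve$ on the left with a sign, since $g_1\ge 0$ is assumed only on $[0,\infty)$; you need the full non-negativity $u_\ve\ge 0$, which your truncation argument does deliver if you test with $u_\ve^-$ instead (using $g(\eta)\le g(0)\le 0$ for $\eta\le 0$), or which the paper gets by citing an invariant-region theorem.

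The genuine gap is in the final $O(\ve)$ estimate. You claim that, after the monotone cancellation, the residual
\begin{equation*}
\frac{\ve^2\kappa}{r_\ve}\int_{\Gamma^\ve} |g(Me^{mt})|\,(u_\ve-Me^{mt})^+\,d\gamma^\ve
\end{equation*}
is bounded by $C\ve\bigl(\|(u_\ve-w)^+\|_{L^2(\Omega^\ve)}+\ve\|\nabla(u_\ve-w)^+\|_{L^2(\Omega^\ve)}\bigr)$. Lemma~\ref{trace_estim} with $p=1$ gives no factor $\ve$ in front of the zeroth-order term: it yields $\mu\bigl(\|(u_\ve-w)^+\|_{L^1(\widetilde\Omega^\ve)}+\ve\|\nabla(u_\ve-w)^+\|_{L^1(\widetilde\Omega^\ve)}\bigr)$, and indeed no such gain is possible, since $\frac{\ve^2}{r_\ve}|\Gamma^\ve|=O(1)$ (the number of hairs is $O(\ve^{-2})$ and each contributes surface area $O(r_\ve)$), contrary to your closing remark that the weight times the measure of $\Gamma^\ve$ is small. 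With an $O(1)$ zeroth-order term, Young plus Gr\"onwall can only produce an $O(1)$ bound, not $O(\ve)$. The paper's resolution is precisely the ingredient you gesture at but do not use: testing with $(u_\ve-Me^{mt})^+$ produces the term $2m\|Me^{mt}(u_\ve-Me^{mt})^+\|_{L^1(\Omega_s^\ve)}$ on the \emph{left} (from $\partial_t(Me^{mt})$), and choosing $m$ large enough that $\mu_1(1+M)\le 2mM$ absorbs the entire $O(1)$ $L^1$ residual there. What then survives on the right is only the $\ve\|\nabla(\cdot)^+\|_{L^1}$ piece of the trace inequality, which after Cauchy--Schwarz splits into $\ve\mu_3\|\nabla(\cdot)^+\|_{L^2}^2$ (absorbed into $2D_u\|\nabla(\cdot)^+\|_{L^2}^2$ for $\ve$ small) plus an explicit $\ve\mu_4$; this last constant, together with the vanishing initial datum $(u_{\rm in}-M)^+=0$, is the sole source of the factor $\ve$ in \eqref{apriori}. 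Without the barrier-absorption step your argument as written proves only $\|(u_\ve-Me^{mt})^+\|^2_{L^2}\le\mu$.
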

\begin{proof}
 Using assumptions on $g$ and initial data  
and employing the theorem on positive invariant sets, \cite[Theorem 2]{Redlinger1989}, we obtain  $u_\ve \geq 0$ in $\Omega^\ve_T$.  Taking  $u_\ve$ as a test function in \eqref{main_weak} and using the nonnegativity of $u_\ve$  and    assumptions on $g(u_\ve)$  ensure
\begin{equation}\label{estim_21}
\begin{aligned} 
 & \|u_\ve (s) \|^2_{L^2(\Omega^\ve)} + 2 D_u \|\nabla u_\ve\|^2_{L^2((0, s) \times \Omega^\ve)} 
+ 2 \beta \| u_\ve\|^2_{L^2((0,s)\times \Gamma^\ve_{R})}  \\
& \qquad + 2 \frac{\kappa \ve^2} {r_\ve} \int_{\Gamma^\ve_s} g_1(u_\ve) u_\ve \, d\gamma^\ve dt
\leq  \mu_1 \frac{\ve^2} {r_\ve} \|u_\ve\|^2_{L^2((0, s ) \times \Gamma^\ve)} + \mu_2
  +   \|u_\ve (0) \|^2_{L^2(\Omega^\ve)},
\end{aligned}
\end{equation}
%where $r_\ve = \ve  a_\ve$. 
for $s \in (0,T]$. Notice that if $g(\eta) \geq 0$ for $\eta \geq 0$, i.e.\ $g_2 \equiv 0$, we have $\mu_1 = \mu_2 =0$. 
Then using  \eqref{trace} with $p=2$ and $ \|v\|^2_{L^2(\widetilde \Omega^\ve)} \leq  \|v\|^2_{L^2(\Omega^\ve)}$,   applying  Gronwall's inequality, and taking supremum over $s \in (0,T]$, yield the first four estimates  in~\eqref{apriori}.  

Taking $(u_\ve - Me^{mt})^+$, with $M > u_{\rm max}$ and some $m>0$, as a test function in~\eqref{main_weak},  and using   assumptions on $g$ and  inequality \eqref{trace}, with $p=2$ and $p=1$, yield 
 $$
 \begin{aligned} 
 & \|(u_\ve(s) - M e^{ms})^+\|^2_{L^2(\Omega^\ve)} 
 + 2D_u \|\nabla(u_\ve - M e^{mt})^+\|^2_{L^2(\Omega^\ve_s)}
% \\ \leq  \frac{\ve^2} {r_\ve} \int_{\Gamma^\ve_s} \Big[ \mu_1 |(u_\ve - M e^{mt})^+|^2 + 
 %\mu_2(1+M) (u_\ve - M e^{mt})^+ \Big] d\gamma^\ve dt 
 \\
 & \quad +  2 m\| M e^{mt}(u_\ve - M e^{mt})^+ \|_{L^1(\Omega^\ve_s)} \leq  \mu_1 \|(1+ Me^{mt})(u_\ve - M e^{mt})^+ \|_{L^1(\Omega^\ve_s)} \\ 
 &\quad  + \mu_2  \|(u_\ve - M e^{mt})^+\|^2_{L^2(\Omega^\ve_s)}  + \ve (1+ M e^{ms})\big(\mu_3  \|\nabla(u_\ve - M e^{mt})^+\|^2_{L^2(\Omega^\ve_s)} + \mu_4 \big). 
 \end{aligned} 
 $$
Choosing $m$ such that $\mu_1(1+M) \leq 2 mM$ and   $\ve$ such that $\ve \mu_3 (1+Me^{mT}) \leq 2 D_u$,
and applying  Gronwall's inequality imply the last estimate in \eqref{apriori}.
\\
Taking $\partial_t u_\ve$ as a test function in \eqref{main_weak}  we obtain  
\begin{equation}\label{estim_ut} 
\begin{aligned} 
& 2 \|\partial_t u_\ve \|^2_{L^2(\Omega^\ve_s)} +D_u \|\nabla u_\ve(s) \|^2_{L^2(\Omega^\ve)}  + 
\beta \|u_\ve(s)\|^2_{L^2(\Gamma^\ve_R)} \\
& \qquad  + 2 \frac{\kappa \ve^2  } {r_\ve} \int_{\Gamma^\ve} G_1(u_\ve(s)) \, d\gamma^\ve\leq 
 \mu_1 \frac{\ve^2} {r_\ve} \|u_\ve(s)\|^2_{L^2(\Gamma^\ve)} + \mu_2
  +  \mu_3 \|u_{\rm in} \|^2_{H^1(\Omega^\ve)},
\end{aligned} 
\end{equation} 
for $s \in (0, T]$ and  $G_1(\eta) = \int_0^\eta g_1(\xi) d \xi$ for $\eta \geq 0$.  Here we used that
$$
\begin{aligned}
& \int_{\Gamma^\ve_{R,s}} \hspace{-0.2 cm } u_\ve\,  \partial_t u_\ve\,  d\gamma^\ve dt =  \frac 12 \int_{\Gamma^\ve_R} \big( |u_\ve(s)|^2 - |u_{\ve}(0) |^2 \big) d\gamma^\ve,  &&  \int_{\Gamma^\ve_R} \hspace{-0.1 cm }  |u_{\ve}(0) |^2  d\gamma^\ve 
\leq \mu_1 u^2_{\rm max}, \\
 & \int_{\Gamma^\ve_s} g(u_\ve) \partial_t u_\ve \, d\gamma^\ve dt = 
  \int_{\Gamma^\ve} \big[G(u_\ve(s)) - G(u_\ve (0)) \big] d\gamma^\ve,  && \text{where }   G(\eta) = \int_0^\eta \hspace{-0.1cm }  g(\xi) d\xi,
%& \int_{\Gamma^\ve} |G_2(u_\ve(s))| d\gamma^\ve \leq  \mu_2\int_{\Gamma^\ve} \big(|u_\ve(s)|^2 + 1\big) d\gamma^\ve,  && \text{where }   ,
\end{aligned}
$$
and that  $g_1(\eta) \geq 0$ implies   $G_1(\eta)  \geq 0$ for $\eta \geq 0$, whereas the sublinearity of $g_2$ yields  $|G_2(\eta)| \leq \mu_2( |\eta|^2 + 1)$, with $G_2(\eta) = \int_0^\eta g_2(\xi) d\xi$. 
Since $u_{\rm in} \in H^1(\Omega)$  is bounded we obtain  that $u_{\rm in}$ is bounded on $\Gamma^\ve$ and $\Gamma^\ve_R$ and the continuity of  $g$ ensures that $G(u_{\rm in})$ is bounded on $\Gamma^\ve$. 
Using  \eqref{trace} with $p=2$, in %the first term on the right-hand side of
\eqref{estim_ut}  implies the estimate for $\partial_t u_\ve$. 
%If $g(\eta) \geq 0$ for $\eta \geq 0$ we have that $\mu_1 =0$  in \eqref{estim_ut}.   
\end{proof}

%Using estimates  \eqref{apriori}, together with properties of the extension operator, see Lemma~\ref{extension_lemma}, 
First we  prove convergence of a sequence of solutions of the microscopic problem for  $g(u) =u$. 
%, to a solution  of the corresponding macroscopic equations.  %First we derive the macroscopic problem  in the case of linear boundary conditions, i.e.\ $g(u) = u$ in \eqref{bc_hair}.  
The case of a nonlinear function $g(u)$ will be considered in Theorem~\ref{th:main2_rig}.

\begin{theorem} \label{th:main1_rig}
Consider $K= \kappa/ a_\ve$ and $\ve^2 \ln(1/a_\ve) =  \lambda$ for some $\lambda >0$,  $\kappa >0$,  $\beta \geq 0$,  and initial condition $u_{\rm in} \in H^1(\Omega)$, with   $0 \leq u_{\rm in } \leq u_{\rm max}$. Then a sequence $\{ u_\ve\} $ of solutions of \eqref{main_1}--\eqref{bc_hair}, \eqref{bc_rest},  \eqref{ic} converges   to a solution $u_0 \in L^2(0,T; H^1(\Omega))$ of the macroscopic problem \eqref{eqn:homo_dist}. If $K= \kappa/ a_\ve$ and $\ve \ln(1/a_\ve) =  \lambda$ for $\lambda>0$,  then a sequence $\{ u_\ve\} $
of solutions of \eqref{main_1}--\eqref{bc_hair}, \eqref{bc_rest},  \eqref{ic} converges to a solution  $u_0 \in L^2(0,T; H^1(\Omega))$  of  the macroscopic equations
\eqref{eqn:first_nonuni_final}. 
\end{theorem}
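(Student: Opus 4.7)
The plan is to pass to the limit in the weak formulation \eqref{main_weak} by combining the compactness provided by Lemma~\ref{extension_lemma} with a logarithmic capacitary corrector $W_\ve$ that rigorously encodes the inner boundary layer around each root hair and reproduces the matching conditions \eqref{matching_limit_1_1}--\eqref{matching_limit_1}. First, combining Lemma~\ref{extension_lemma} with the a priori bounds \eqref{apriori}, the extensions $\tilde u_\ve:=P_\ve u_\ve$ are bounded in $L^\infty(0,T;L^2(\Omega))\cap L^2(0,T;H^1(\Omega))$ with $\partial_t\tilde u_\ve$ bounded in $L^2(\Omega_T)$. The Aubin--Lions lemma then gives, along a subsequence, $\tilde u_\ve\to u_0$ strongly in $L^2(\Omega_T)$ and weakly in $L^2(0,T;H^1(\Omega))$, with $\partial_t\tilde u_\ve\rightharpoonup\partial_t u_0$ in $L^2(\Omega_T)$. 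The bulk, initial-data and root-surface contributions in \eqref{main_weak} pass to the limit in the standard way.

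Second, fix $\rho\in(0,1/2)$ and, in the annulus $A_\ve^\xi=\{r_\ve<|\hat x-\ve\xi|<\rho\ve\}\times(0,L)$ attached to each cell $\xi\in\Xi^\ve$, define
\begin{equation*}
W_\ve(\hat x)=\frac{\ln(\rho\ve/|\hat x-\ve\xi|)}{\ln(\rho/a_\ve)},
\end{equation*}
extended by $1$ on the hairs and by $0$ elsewhere in $\Omega$. This $W_\ve$ is harmonic in each $A_\ve^\xi$ in $\hat x$, equals $1$ on $\Gamma^\ve$, and satisfies $\|W_\ve\|_{L^2(\Omega)}\to 0$; in the distinguished regime $\ve^2\ln(1/a_\ve)=\lambda$ one additionally has $\int_\Omega|\nabla W_\ve|^2 dx\to (2\pi/\lambda)|\Omega_L|$, which is the analogue of the strange-term capacity constant adapted to the present cylindrical microstructure.

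Third, Green's identity applied to $W_\ve$ in each annulus yields, for smooth $\phi$,
\begin{equation*}
\ve\frac{\kappa}{a_\ve}\int_{\Gamma^\ve_T} u_\ve\phi\,d\gamma^\ve dt=\ve^2\kappa\ln(\rho/a_\ve)\int_0^T\!\!\int_{\bigcup_\xi A_\ve^\xi}\nabla_{\hat x}W_\ve\cdot\nabla u_\ve\,\phi\,dx dt+\frac{\kappa}{\rho}\int_0^T\!\!\int_{\Lambda^\ve_0\times(0,L)} u_\ve\phi\,d\gamma^\ve dt+o(1).
\end{equation*}
By strong convergence of $\tilde u_\ve$ together with a mean-value argument on the cylindrical surfaces $\Lambda_0^\ve\times(0,L)$, the second term on the right converges to $2\pi\kappa\int_{\Omega_{L,T}} u_0\phi\,dxdt$. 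In the regime $\ve\ln(1/a_\ve)=\lambda$ the prefactor $\ve^2\kappa\ln(\rho/a_\ve)\to 0$, the first term drops out, and one obtains \eqref{eqn:first_nonuni_final}. In the distinguished regime $\ve^2\ln(1/a_\ve)=\lambda$ the prefactor tends to $\lambda\kappa$; testing the microscopic equation itself against $W_\ve\phi$ and using Lemma~\ref{trace_estim} and the estimates \eqref{apriori} to control the resulting surface terms on $\Gamma^\ve$ and $\Lambda_0^\ve\times(0,L)$ identifies the limit of the cross term and reconstitutes exactly the effective coefficient $2\pi\kappa/(1+\lambda\kappa/D_u)$ of \eqref{eqn:homo_dist}. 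Uniqueness of the limiting linear parabolic problems then upgrades subsequential convergence to convergence of the entire sequence.

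The main obstacle is the identification, in the distinguished regime, of the cross term $\int\nabla W_\ve\cdot\nabla u_\ve\,\phi$: $W_\ve$ converges to zero only weakly while $\nabla W_\ve$ concentrates on the thin annuli, and it has to be paired with $\nabla u_\ve$, for which only weak convergence of the extension is available. Closing the loop requires feeding $W_\ve\phi$ back as a test function into \eqref{main_weak}, carefully controlling the surface contributions on $\Gamma^\ve$ and $\Lambda_0^\ve\times(0,L)$ via the non-standard trace estimate of Lemma~\ref{trace_estim}, and recognising that the limit of the cross term reproduces exactly the factor $(1+\lambda\kappa/D_u)^{-1}$, the rigorous counterpart of the formal matching relation \eqref{matching_limit_1_1}.
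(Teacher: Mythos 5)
Your argument is correct in its essentials but follows a genuinely different route from the paper's proof of this theorem. The paper builds the Robin condition into the corrector: $w^\ve$ solves \eqref{w_problem} with $D_u\nabla w^\ve\cdot\hat{\bf n}=-\kappa(\ve^2/r_\ve)w^\ve$ on $\partial B_{r_\ve}$ and $w^\ve=1$ on $\partial B_{\ve\rho}$, is computed explicitly in \eqref{sol_correct}, and enters as a multiplier in the test function $\hat w^\ve\psi_1+\psi_2$; the factor $(1+\lambda\kappa/D_u)^{-1}$ is then simply read off from the explicit flux of $\hat w^\ve$ on $\Gamma_0^\ve$, and the limit is taken by two-scale convergence on that surface. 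You instead use the pure capacitary (Dirichlet) corrector $W_\ve$ --- essentially the one the paper reserves for the nonlinear Theorem~\ref{th:main2_rig}, cf.\ \eqref{corrector2} --- and close the argument with a two-identity system: the Green/capacity identity expressing $B_\ve:=\ve\kappa a_\ve^{-1}\int_{\Gamma^\ve_T}u_\ve\phi\,d\gamma^\ve dt$ in terms of the cross term $X_\ve=\int\nabla W_\ve\cdot\nabla u_\ve\,\phi\,dxdt$ and the $\Gamma_0^\ve$ integral, and the weak formulation tested against $W_\ve\phi$, which gives $D_u X_\ve=-B_\ve+o(1)$ since $W_\ve\equiv 1$ on $\Gamma^\ve$ and $W_\ve\to 0$ strongly. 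Eliminating $X_\ve$ yields $(1+\lambda\kappa/D_u)B_\ve\to 2\pi\kappa\int_{\Omega_{L,T}}u_0\phi\,dxdt$, which is exactly the rigorous counterpart of \eqref{matching_limit_1_1}. This closure is a transparent way to obtain the nonstandard coefficient without ever identifying the limit of a product of two weakly convergent gradients in isolation; the paper's Robin corrector buys explicitness (no system to solve) at the price of a less symmetric construction.

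Three points need repair before this is complete. First, admissibility: your $W_\ve$, extended by $0$ outside the annuli, is discontinuous across $\{x_3=L\}$, so $W_\ve\phi\notin H^1(\Omega^\ve)$ unless you either require $\phi=0$ at $x_3=L$ (as the paper does for $\psi_1$) or interpolate $W_\ve$ linearly to zero over $[L,L+\ve)$ as in the proof of Theorem~\ref{th:main2_rig}; the additional gradient this creates is $o(1)$ in $L^2$ and harmless, but it must be included. Second, the last term of your Green identity is missing a factor of $\ve$: it should read $\ve(\kappa/\rho)\int_0^T\int_{\Lambda_0^\ve\times(0,L)}u_\ve\phi\,d\gamma^\ve dt$, which is precisely the scaling needed for two-scale convergence on the oscillating surfaces (together with $\ve\|u_\ve-u_0\|^2_{L^2(\Gamma^\ve_{0,T})}\to 0$) to produce the stated limit $2\pi\kappa\int_{\Omega_{L,T}}u_0\phi\,dxdt$. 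Third, in the regime $\ve\ln(1/a_\ve)=\lambda$ the energy $\|\nabla W_\ve\|^2_{L^2}$ is not $O(1)$ but of order $\ve^{-1}$, so ``the first term drops out'' requires the quantitative estimate that the prefactor $\ve^2\kappa\ln(\rho/a_\ve)=O(\ve)$ dominates $\|\nabla W_\ve\|_{L^2}\|\nabla u_\ve\|_{L^2}=O(\ve^{-1/2})$, giving $O(\ve^{1/2})\to 0$. With these adjustments the proposal is a valid alternative proof.
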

\begin{proof}
 The a priori estimates  \eqref{apriori}  and extension Lemma~\ref{extension_lemma} imply
 $$
 \|u_\ve\|_{L^2(0,T; H^1(\Omega))} +  \|\partial_t u_\ve\|_{L^2((0,T)\times \Omega)} \leq \mu,
 %\quad \|(u_\ve - M e^{m t})^+\|_{ L^2((0,T)\times\Omega)} \leq \mu \ve^{\frac 12},
 $$
 with a constant $\mu$ independent of $\ve$, where $u_\ve$ is identified with its extension.  Hence there exists a function $u_0\in L^2(0,T; H^1(\Omega))$, with $\partial_t u_0 \in L^2((0,T)\times \Omega)$, such that
 \begin{equation}\label{converg_micro_u0}
 \begin{aligned} 
 &u_\ve \rightharpoonup u_0 \text{  weakly in }  L^2(0,T; H^1(\Omega)),  \; \;\;
 \partial_t u_\ve \rightharpoonup \partial_t u_0 \text{  weakly in }  L^2((0, T) \times\Omega), \\
 &u_\ve \to u_0 \text{  strongly in } L^2(0,T; H^s(\Omega)), \; \text{ for } s < 1,
 \qquad \text{(up to a subsequence)},
 \end{aligned}
 \end{equation}
  where the strong convergence is ensured by the compactness of  $H^1(\Omega) \subset H^s(\Omega)$   for~$s<1$ and the Aubin-Lions Lemma~\cite{Lions_1969}.

To pass to the limit as $\ve \to  0$ in the weak formulation of~\eqref{main_1}--\eqref{bc_hair},~\eqref{bc_rest},~\eqref{ic}  
we need to construct an appropriate  corrector  to compensate the boundary conditions on $\Gamma^\ve$.  Define $w^\ve$ to be the solution of
\begin{equation}\label{w_problem}
\begin{aligned}
& \nabla_{\hat x}\cdot(D_u \nabla_{\hat x}  w^\ve) =0  \;  && \text{ in } B_{\ve \rho } \setminus  \overline B_{r_\ve }, \\
& D_u\nabla_{\hat x}  w^\ve \cdot \hat{\bf n} = - \kappa (\ve^2/r_\ve)   w^\ve  \;  && \text{ on } \partial B_{r_\ve } , && w^\ve = 1 && \text{ on } \partial B_{\ve \rho },
\end{aligned}
\end{equation}
where $\hat x = (x_1, x_2)$, which can be solved explicitly to obtain for $\hat x \in B_{\ve \rho } \setminus \overline B_{r_\ve }$
\begin{equation}\label{sol_correct}
 w^{\ve}(\hat x) = \frac{\kappa \ve^{2}}{D_u + \kappa( \lambda  + \ve^2 \ln(\rho)) } \ln{\Big(\sqrt{x_1^2 + x_2^2}\Big)} +\frac{ D_u + \kappa (\lambda  -  \ve^{2} \ln(\ve))}{D_u+ \kappa(\lambda + \ve^2\ln(\rho))}.
 \end{equation} 
 We extend $w^\ve$ in a trivial way to $(B_{\ve \rho } \setminus  \overline B_{r_\ve })\times (0,L)$   and denote it by $\hat w^\ve(x) = w^\ve(\hat x)$.  Then we extend $\hat w^\ve(x)$
periodically with period $\ve Y$ into $\Omega^\ve \cap \Omega_0^\ve$ and  by $1$ into $\widetilde \Omega^\ve$.

Using $\phi =  \hat w^\ve \psi_1 + \psi_2$ as a test function  in \eqref{main_weak},  where  $\psi_1 \in C^1([0,T]; C^1(\overline \Omega_L))$, $\psi_2 \in C^1([0,T]; C^1(\overline{\Omega \setminus \Omega_L}))$,    with  $\psi_1(t, \hat x, L) = \psi_2(t, \hat x , L) = 0$,  and extended by zero into $\Omega_{M-L, T} =(0,T)\times(\Omega \setminus  \overline \Omega_L)$ and  $\Omega_{L, T} = (0,T)\times \Omega_{L}$ respectively,  yields
$$
\begin{aligned}
\int_{\Omega^\ve_{L,T}}  \hspace{-0.1 cm }  \Big[ \partial_t u_\ve\,  \hat w^\ve \psi_1 
  +   D_u \nabla u_\ve  \nabla (\hat w^\ve \psi_1) \Big] dx dt     +   \int_{\Gamma^\ve_T}  \hspace{-0.1 cm }  \frac{\ve^2 \kappa} {r_\ve}    u_\ve \,  \hat w^\ve \psi_1   d\gamma^\ve dt \\ +    \int_{\Gamma_{R, T}^\ve} \hspace{-0.4cm }  \beta  u_\ve \, \hat w^\ve \psi_1  d \gamma^\ve dt  
  + \int_{\Omega_{M-L,T}}  \hspace{-0.1 cm }  \Big[ \partial_t u_\ve  \psi_2 
  +   D_u \nabla u_\ve  \nabla \psi_2 \Big] dx dt = 0. 
\end{aligned}
$$ 
Notice that the assumptions on $\psi_1$ and $\psi_2$ and the construction of $\hat w^\ve$ ensure that  $\phi\in L^2(0,T; H^1(\Omega^\ve))$.  The second term in the last equality can be rewritten as
$$
\begin{aligned}
&  \int_{\Omega^\ve_{L,T}} \hspace{-0.2 cm } D_u \hat w^\ve \nabla u_\ve  \nabla \psi_1 dx  dt + \int_{\Omega^\ve_{L,T}} \hspace{-0.2 cm } D_u  \psi_1  \nabla u_\ve  \nabla \hat w^\ve  dx dt
%& =  D_u \bigg[ \int_{\Omega^\ve_T} \hat w^\ve \nabla u_\ve  \nabla \psi dxdt  + \int_{\widetilde \Omega^\ve_T} \psi \nabla u_\ve  \nabla \hat w^\ve \, dxdt +\int_0^T \int_{\Omega^\ve\setminus  \widetilde \Omega^\ve} \psi \nabla u_\ve  \nabla \hat w^\ve  dx dt \bigg] \\
 =  \int_{\Omega^\ve_{L,T}}\hspace{-0.2 cm } D_u \hat  w^\ve  \nabla u_\ve  \nabla \psi_1  dx dt   \\
&+ \int_{\widetilde \Omega^\ve_{L,T}} \hspace{-0.2 cm } D_u \psi_1 \nabla u_\ve   \nabla \hat w^\ve  dx dt
+  \int_{\Gamma^\ve_{T}}\hspace{-0.2 cm } D_u  u_\ve \nabla \hat  w^\ve \cdot  {\bf n} \, \psi_1 d\gamma^\ve dt   +   \int_{\Gamma_{0,T}^\ve} \hspace{-0.3 cm } D_u u_\ve \nabla \hat w^\ve\cdot {\bf  n} \, \psi_1  d\gamma^\ve dt \\
 & - \int_0^T \int_{\Omega^\ve_L\setminus \widetilde\Omega^\ve_L}
\hspace{-0.1cm } \Big[ u_\ve  \nabla\cdot(D_u \nabla \hat  w^\ve) \psi_1   + D_u u_\ve  \nabla \hat w^\ve   \nabla \psi_1  \Big] dx dt.
\end{aligned}
$$
 By the definition  of $\hat w^\ve$, we have $ \nabla\cdot(D_u \nabla \hat  w^\ve)  =0 $ in $\Omega^\ve_L\setminus \widetilde\Omega^\ve_L$  and $\nabla \hat w^\ve =0 $ in $\widetilde \Omega^\ve_L$. 
 The definition of $\hat w^\ve$ also implies 
$$\|\nabla \hat w^\ve\|_{L^2(\Omega^\ve_L)} \leq \mu,
$$
with  some constant $\mu$ independent of $\ve$.
Since $\hat w^\ve$ is bounded in $\Omega^\ve_L$,  $|\Omega_L \setminus \Omega^\ve_L| \to 0$ as $\ve \to 0$, and $\hat w^\ve =1$ in  $\widetilde \Omega^\ve_{L}$, we obtain that  $\widetilde  w^\ve \to 1$  in $L^2(\Omega_L)$  strongly, where $\widetilde w^\ve$ is the extension of $\hat w^\ve$ by zero into $\Omega_L\setminus \Omega^\ve_L$.
 Thus  strong convergence of the extension of $u_\ve$   in $L^2((0,T)\times \Omega)$  and weak convergence of $\nabla \hat w^\ve \rightharpoonup 0$ in $L^2(\Omega_L)$, using the same notation for $\hat w^\ve$ and its extension,  ensure
$$
\lim\limits_{\ve \to 0 } \int_0^T\int_{\Omega_L^\ve   \setminus\widetilde\Omega_L^\ve} D_u u_\ve  \nabla \hat w^\ve  \nabla \psi_1dx dt = 0.
$$
Using  $\|\nabla u_\ve \|_{L^2(\Omega_T)} \leq C$ and  $|\Omega \setminus \Omega^\ve| \to 0$,   $\widetilde  w^\ve \to 1$ in $L^2(\Omega_L)$,  as $\ve \to 0$,  yields
 $$
 \begin{aligned}
& \int_{\Omega^\ve_{L, T}}\hspace{-0.2 cm } \big[ \partial_t u_\ve \hat w^\ve  \psi_1 + D_u \hat w^\ve \nabla u_\ve  \nabla \psi_1\big] dx dt \to \int_{\Omega_{L,T}} \hspace{-0.2 cm } \big[\partial_t u_0 \,  \psi_1+  D_u \nabla u_0   \nabla \psi_1\big] dx dt,
\\
& \int_{\Omega_{M-L, T}} \hspace{-0.3 cm } \big[\partial_t  u_\ve \,  \psi_2+ D_u  \nabla u_\ve  \nabla \psi_2 \big] dx dt \to  \int_{\Omega_{M-L, T}} \hspace{-0.25 cm }  \big[\partial_t u_0 \, \psi_2+   D_u \nabla u_0  \nabla \psi_2 \big] dx dt,
\\
&   \int_{\Gamma_{R, T}^\ve} \beta\,  u_\ve \, \hat w^\ve \psi_1 \, d \gamma^\ve dt \to    \int_{\Gamma_{R, T}} \beta\,  u_0\,   \psi_1 \, d \hat x dt,   \hspace{3 cm }  \text{ as } \; \ve \to 0,
\end{aligned}
 $$
where the strong convergence of $u_\ve$ in $L^2(0,T; H^s(\Omega))$, for $\frac 12 < s <1$, ensures its strong convergence  in $L^2((0,T)\times \Gamma_{R})$. Computing $\nabla \hat w^\ve$  yields
$$
D_u\nabla \hat w^\ve \cdot {\bf n} =   \frac{ D_u \kappa\, \ve/\rho }{D_u+\kappa(\lambda + \ve^2\ln(\rho))}=   \frac{\kappa\, \ve/\rho }{1+ (\kappa/D_u)(\lambda + \ve^2 \ln(\rho))} \quad \text{  on } \;  \Gamma_0^\ve. 
$$
 Applying the two-scale convergence on  $\Gamma^\ve_0 = \Lambda_0^\ve \times (0,L)$,  with a test function $\psi_1 \in C^1([0,T]; C^1(\overline \Omega_L))$, 
see e.g.~\cite{Allaire_1996, Radu_1996},
and  using   $\lim\limits_{\ve \to 0} \ve \|u_\ve - u_0\|^2_{L^2(\Gamma_{0,T}^\ve)}= 0$, ensured by the strong convergence of $u_\ve$ in $L^2(0,T; H^s(\Omega))$ for $\frac 1 2 <s <1$,
see e.g.~\cite{Ptashnyk2010b},  
yields
\begin{equation}\label{two-scale_bound_1}
\begin{aligned}
\lim\limits_{\ve \to 0} \int_{\Gamma_{0,T}^\ve}\hspace{-0.3 cm } D_u  \nabla \hat  w^\ve\cdot  {\bf n}  \, u_\ve \, \psi_1  d\gamma^\ve dt   
 = \lim\limits_{\ve \to 0} \ve  \int_{\Gamma_{0,T}^\ve}\hspace{-0.1 cm }   \frac{(\kappa/\rho) \,  (u_\ve - u_0) \, \psi_1 }{1+ (\kappa/D_u)(\lambda + \ve^2 \ln(\rho))}    d\gamma^\ve  dt 
\\ + 
\lim\limits_{\ve \to 0} \ve \int_0^T \hspace{-0.15 cm } \int_0^L \hspace{-0.15 cm } \int_{\Lambda_0^\ve}\hspace{-0.1 cm }   \frac{(\kappa/\rho) \,  u_0 \, \psi_1 }{1+ (\kappa/D_u)(\lambda + \ve^2 \ln(\rho))}    d\hat\gamma^\ve dx_3 dt
\\
  =  \int_{\Omega_{L,T}}\hspace{-0.05 cm }\int_{\partial B_\rho} \frac{(\kappa/\rho) \, u_0 \, \psi_1}{1+\lambda (\kappa/D_u)}   d\hat\gamma dxdt 
=
  \int_{\Omega_{L,T}} \hspace{-0.05 cm }\frac{ 2\pi \kappa\, u_0 \, \psi_1}{1+\lambda (\kappa/D_u)}  dx dt . 
  \end{aligned}
\end{equation}
Notice that  $u_0$ and $\psi_1$ are independent of  $y \in \partial B_\rho$ and the $\ve$-scaling in the boundary integrals in~\eqref{two-scale_bound_1} is essential for the two-scale convergence on  oscillating surfaces.

Using the trace inequality  $\ve \|v\|^2_{L^2(\Gamma_{0}^\ve)} \leq \mu \| v \|^2_{H^1(\Omega_L)}$, see e.g.~\cite{Ptashnyk2010b}, we  have
$$
\begin{aligned} 
\Big|\ve  \int_{\Gamma_{0,T}^\ve}\hspace{-0.1 cm }   \frac{(\kappa/\rho) \,  (u_\ve - u_0) \, \psi_1 }{1+ (\kappa/D_u)(\lambda + \ve^2 \ln(\rho))}    d\gamma^\ve  dt\Big| & \leq \mu_1  \ve^{\frac 1 2} \|u_\ve - u_0\|_{L^2(\Gamma_{0,T}^\ve)}\| \psi_1 \|_{L^2(0,T; H^1(\Omega_L))},\\
%& \leq  \mu_2 \ve^{1/2} \|u_\ve - u_0\|_{L^2(\Gamma_{0,T}^\ve)}, \\
\ve \Big\| \frac{(\kappa/\rho) \,  u_0 }{1+ (\kappa/D_u)(\lambda + \ve^2 \ln(\rho))} \Big\|^2_{L^2(\Gamma_{0,T}^\ve)} & \leq \mu_2 \|u_0\|^2_{L^2(0,T; H^1(\Omega_L))} \leq \mu_3, 
\end{aligned} 
$$
for  $0<\ve \leq \ve_0$, such that $\lambda + \ve_0^2 \ln(\rho) >0$ with $0< \rho < 1/2$.

Combining all the calculations from above, in the limit as $\ve \to 0$, we obtain 
%and considering $\psi \in L^2(0,T; H^1(\Omega))$ as a test function  
  the equation and boundary conditions  in  \eqref{eqn:homo_dist}. Standard arguments, see e.g.~\cite{Ptashnyk2010}, ensure that $u_0$  satisfies the initial condition in \eqref{eqn:homo_dist} and   is a unique solution of \eqref{eqn:homo_dist}. Hence the whole sequence $\{u_\ve\}$ converges to $u_0$ as $\ve \to 0$. 

If $\ve \ln(1/a_\ve) = \lambda$ then the solution of problem \eqref{w_problem}  is given by
\begin{equation}\label{sol_correct_2}
\begin{aligned} 
& w^{\ve}(x_1, x_2) = \frac{\kappa \ve^{2}}{D_u +\kappa( \ve \lambda +\ve^2 \ln(\rho)) } \ln{\Big(\sqrt{x_1^2 + x_2^2}\Big)} + \frac{ D_u + \kappa (\ve\lambda  -  \ve^{2} \ln(\ve))}{D_u+ \kappa(\ve\lambda + \ve^2\ln(\rho))}, \\
 & D_u\nabla \hat w^\ve \cdot {\bf n}  =  \ve \, \frac{ \kappa /\rho }{1+  (\kappa/D_u)(\ve \lambda + \ve^2\ln(\rho))}  \quad \text{  on } \;  \Gamma_0^\ve. 
 \end{aligned} 
 \end{equation}
In this case the boundary integral converges to
$$
\int_0^T\int_{\Gamma_0^\ve} D_u \nabla \hat  w^\ve\cdot {\bf n} \, u_\ve \psi_1 \, d\gamma^\ve  dt  \to \int_0^T    \int_{\Omega_L}  2\pi \kappa \,  u_0\,  \psi_1\,  dx dt \quad \text{ as } \ve \to 0, 
$$
and we obtain the macroscopic equation as in \eqref{eqn:first_nonuni_final}.
\end{proof}
Now we consider the nonlinear  condition \eqref{bc_hair}  on the boundaries  of the microstructure. %, i.e.\ on the surfaces of root hairs.
\begin{theorem} \label{th:main2_rig}
Consider $K= \kappa/ a_\ve$, for $\kappa>0$,  and $\ve^2 \ln(1/a_\ve) =  \lambda$ for some $\lambda >0$,  let $g$  be  continuously differentiable  and monotone non-decreasing on $[-\tilde \varsigma, \infty)$, for some $\tilde \varsigma>0$,   and $g(\eta) = g_1(\eta) + g_2(\eta)$, where  $g_1(\eta) \geq 0$ for $\eta\geq 0$, with $g_1(0) = 0$, and $g_2$ is sublinear, with $g_2(0) \leq 0$,  initial condition $u_{\rm in} \in H^1(\Omega)$ with  $0 \leq u_{\rm in} \leq u_{\rm max}$,  and $\beta \geq 0$. Then a sequence $\{ u_\ve\} $ of solutions of \eqref{main_1}--\eqref{bc_hair}, \eqref{bc_rest},  \eqref{ic}  converges to a solution $u_0 \in L^2(0,T; H^1(\Omega))$ of the macroscopic problem \eqref{eqn:homo_nonlin}. If $K= \kappa/ a_\ve$ and $\ve \ln(1/a_\ve) =  \lambda$ for $\lambda>0$ then a sequence $\{ u_\ve\} $ of solutions of \eqref{main_1}--\eqref{bc_hair}, \eqref{bc_rest},  \eqref{ic}   converges to a solution $u_0 \in L^2(0,T; H^1(\Omega))$ of the macroscopic equations~\eqref{eqn:first_nonlin_1}. 
\end{theorem}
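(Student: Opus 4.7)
The plan is to adapt the argument of Theorem~\ref{th:main1_rig} to the nonlinear setting, with the principal new ingredient being a solution-adapted nonlinear corrector that reflects the implicit relation $\phi_0 = h + \lambda(\kappa/D_u)g(h)$ defining $h(u_0)$. From the a priori bounds~\eqref{apriori} and the extension Lemma~\ref{extension_lemma}, I would extract (up to a subsequence) $u_0\in L^2(0,T;H^1(\Omega))$ with $\partial_t u_0\in L^2((0,T)\times\Omega)$ such that $u_\ve\rightharpoonup u_0$ weakly in $L^2(0,T;H^1(\Omega))$, $\partial_t u_\ve\rightharpoonup\partial_t u_0$ weakly in $L^2((0,T)\times\Omega)$, and $u_\ve\to u_0$ strongly in $L^2(0,T;H^s(\Omega))$ for every $s<1$ by the Aubin--Lions lemma; in particular, strong convergence of traces on $\Gamma_R^\ve$ and $\Gamma_0^\ve$ follows.

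For each smooth test function $\phi_0$, I would construct in each cell a radial corrector $\hat w^\ve=\hat w^\ve[\phi_0]$ solving
\begin{equation*}
\nabla_{\hat x}\cdot(D_u\nabla_{\hat x}\hat w^\ve)=0 \text{ in } B_{\ve\rho}\setminus\overline{B_{r_\ve}},\quad D_u\nabla\hat w^\ve\cdot\hat{\bf n}=-\ve(\kappa/a_\ve)\,g(\hat w^\ve)\text{ on }\partial B_{r_\ve},\quad \hat w^\ve=\phi_0\text{ on }\partial B_{\ve\rho}.
\end{equation*}
The ansatz $\hat w^\ve=\phi_0+A^\ve\ln(r/(\ve\rho))$ reduces the Robin condition to the scalar implicit equation $D_u A^\ve=\ve^2\kappa\,g(\phi_0+A^\ve\ln(a_\ve/\rho))$, uniquely solvable by monotonicity of $g$. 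Under $\ve^2\ln(1/a_\ve)=\lambda$, one verifies $A^\ve/\ve^2\to(\kappa/D_u)g(h(\phi_0))$ and the inner trace $\hat w^\ve|_{\partial B_{r_\ve}}\to h(\phi_0)$ as $\ve\to 0$, with $h(\phi_0)$ the unique solution of $\phi_0=h+\lambda(\kappa/D_u)g(h)$. The corrector is then extended by $\phi_0$ to the rest of $\Omega$.

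Using $\phi=\hat w^\ve\psi_1+\psi_2$ as a test function and following the Green's identity manipulations of the proof of Theorem~\ref{th:main1_rig}, the Robin contribution on $\Gamma^\ve$ combines with the IBP boundary term (via the Robin condition for $\hat w^\ve$) into the residual
\begin{equation*}
\int_{\Gamma^\ve_T}\frac{\kappa\ve^2}{r_\ve}\bigl[g(u_\ve)\,\hat w^\ve|_{r_\ve}-g(\hat w^\ve|_{r_\ve})\,u_\ve\bigr]\psi_1\,d\gamma^\ve\,dt,
\end{equation*}
while the flux on $\Gamma^\ve_0$ is treated via two-scale convergence on oscillating surfaces, as in~\eqref{two-scale_bound_1}, invoking the asymptotics of $A^\ve/\ve^2$. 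To identify the nonlinear limit, I would let $\phi_0$ range over a dense family of smooth approximations of $u_0$ and deploy the monotonicity of $g$ through a Minty-type argument to pass $\phi_0\to u_0$ in the residual; the scaled trace inequality~\eqref{trace} furnishes the uniform $L^1$-control on the tiny cylinders needed for this step. Uniqueness of the solution of~\eqref{eqn:homo_nonlin} (again by monotonicity of $g$) then extends convergence to the whole sequence. The case $\ve\ln(1/a_\ve)=\lambda$ is analogous, the effective $\lambda$-coefficient in the implicit equation tending to zero so that $h(\phi_0)\to\phi_0$ in the limit and $g(h(\phi_0))$ collapses to $g(\phi_0)$, thereby recovering~\eqref{eqn:first_nonlin_1}.

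The main obstacle is the vanishing of this nonlinear residual on $\Gamma^\ve$: unlike the linear case, where an exact cancellation occurs between the Robin term and the IBP contribution, here only an algebraic identity of the form $g(u_\ve)\hat w^\ve|_{r_\ve}-g(\hat w^\ve|_{r_\ve})u_\ve$ remains. Its vanishing hinges on the nonlinear corrector (whose boundary trace matches the formal inner value $h(\phi_0)$ predicted in Section~\ref{sec:dist_limits}), the monotonicity of $g$ applied via Minty's trick to identify the weak limit of $g(u_\ve)$ from information about smooth approximants, and the $\ve$-weighted trace estimate~\eqref{trace}, which is essential because strong convergence of $u_\ve$ is available on $\Gamma_0^\ve$ but not directly on the tiny cylinders $\Gamma^\ve$.
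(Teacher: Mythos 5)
Your setup (a priori bounds, extension, Aubin--Lions, strong trace convergence on $\Gamma_R^\ve$ and $\Gamma_0^\ve$) matches the paper, and your nonlinear radial corrector is computed correctly: the implicit equation $D_u A^\ve=\ve^2\kappa\,g(\phi_0+A^\ve\ln(a_\ve/\rho))$ does yield $A^\ve/\ve^2\to(\kappa/D_u)g(h(\phi_0))$ and inner trace $\to h(\phi_0)$, consistent with the formal asymptotics. But there is a genuine gap at the crux of the argument, and you have in fact located it yourself: the residual
\begin{equation*}
\frac{\kappa\ve^2}{r_\ve}\int_{\Gamma^\ve_T}\bigl[g(u_\ve)\,\hat w^\ve|_{r_\ve}-g(\hat w^\ve|_{r_\ve})\,u_\ve\bigr]\psi_1\,d\gamma^\ve dt
\end{equation*}
cannot be shown to vanish by the means you invoke. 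Monotonicity gives a sign only for expressions of the form $[g(a)-g(b)](a-b)$; your residual has the structure $g(a)b-g(b)a$, which has no sign and does not linearize into a Minty-type pairing. Worse, it contains $g(u_\ve)$ evaluated on the shrinking cylinders $\Gamma^\ve$, where only the weighted trace bound \eqref{trace} (hence boundedness, not convergence) is available; identifying the weak limit of $g(u_\ve)$ against the singular measure $\frac{\ve^2}{r_\ve}d\gamma^\ve$ is essentially equivalent to proving $u_\ve\to h(u_0)$ on the hair surfaces, i.e.\ to the theorem itself. In the linear case this term cancels identically, which is why Theorem~\ref{th:main1_rig} can work with the weak formulation directly; that cancellation is exactly what is lost here.

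The paper's proof removes $g(u_\ve)$ from the hair surfaces \emph{before} passing to the limit, by rewriting the weak formulation as the variational inequality \eqref{ine_micro}, in which only $g(\phi)$ for the test function $\phi$ appears on $\Gamma^\ve$. It then takes $\phi=\psi-\tilde\kappa\,g(h)\,w^\ve$ with the \emph{linear} Dirichlet corrector $w^\ve$ ($\Delta w^\ve=0$, $w^\ve=1$ on $\partial B_{r_\ve}$, $w^\ve=0$ on $\partial B_{\ve\rho}$), so that $\phi\equiv h$ exactly on $\Gamma^\ve$ and the boundary contribution reduces to the identity $\frac{\kappa\ve^2}{r_\ve}\int_{\Gamma^\ve_T}[g(\psi-\tilde\kappa g(h))-g(h)][\psi-\tilde\kappa g(h)-u_\ve]\,d\gamma^\ve dt=0$; the remaining surface integrals live on $\Gamma_0^\ve$ and are handled by two-scale convergence as in \eqref{two-scale_bound_1}. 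The Minty step ($\psi=u_0\pm\sigma\varphi$, $\sigma\to0$) is then performed only in the limiting inequality \eqref{ine_macro}, where all quantities are macroscopic. Your nonlinear corrector is asymptotically the same function as the paper's $\psi-\tilde\kappa g(h)w^\ve$ (both equal $\psi$ outside and $h(\psi)$ on the hairs), so your construction could be salvaged --- but only by inserting it into the variational-inequality framework rather than the weak formulation; as written, the proof does not close.
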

\begin{proof} 
In the same way as in the proof of  Theorem~\ref{th:main1_rig}, using  a priori estimates~\eqref{apriori}  and extension Lemma~\ref{extension_lemma} we obtain 
%the same convergence results as in \eqref{converg_micro_u0}.
 following convergence results 
  \begin{equation}\label{converg_micro_u0_2}
  \begin{aligned} 
  & u_\ve \rightharpoonup u_0 \text{  weakly in }  L^2(0,T; H^1(\Omega)),  \;  \partial_t u_\ve \rightharpoonup \partial_t u_0 \text{  weakly in }  L^2((0,T)\times\Omega), \\
  & u_\ve \to u_0 \text{  strongly in } L^2(0,T; H^s(\Omega)), \;  \text{for } \; s <1, \quad \text{ (up to a subsequence)},
  \end{aligned}
  \end{equation}
where $u_0 \in L^2(0,T; H^1(\Omega))\cap H^1(0,T; L^2(\Omega))$. Since $u_\ve \geq 0$ for all $\ve >0$ we have $u_0 \geq 0$, whereas the last estimate in~\eqref{apriori}, together with the strong convergence of~$u_\ve$, implies $u_0 \in L^\infty((0,T)\times \Omega)$. 
 
 As in the proof of Theorem~\ref{th:main1_rig}, the main step  is to construct an appropriate corrector to pass to the limit in the  integral over the boundaries of the microstructure.
In a similar  way as in \cite{gomez2015, Jaeger_small_2},  we define $w^\ve$ to be the solution of 
\begin{equation}\label{corrector2}
\begin{aligned}
& \Delta w^\ve  = 0 \;   \text{ in }\;  B_{\ve\rho} \setminus \overline B_{r_\ve}, \; \; 
&&   w^\ve  =  1  \;     \text{ on } \; \partial B_{r_\ve} ,  \; \; 
&& w^\ve = 0   \;   \text{ on } \; \partial B_{\ve\rho}.
\end{aligned}
\end{equation}
Then we   extend $w^\ve$ by $1$ into $B_{r_\ve}$, in a trivial way into the $x_3$-direction for $x_3\in(0,L)$, by $w^\ve(\hat x)[1+ (L-x_3)/\ve]$ for $x_3\in [L, L+\ve)$, and then $\ve Y$-periodically into $\Omega_0^\ve\cup \Omega^\ve_{0, L+\ve}$, where $\Omega^\ve_{0, L+\ve} = \bigcup_{\xi \in \Xi^\ve} \ve(\overline B_\rho + \xi) \times [L, L+\ve)$,   and by $0$ into $\widetilde\Omega^\ve_{L+\ve} = \widetilde\Omega^\ve \setminus \Omega^\ve_{0, L+\ve}$. We denote this extension of $w^\ve$ again by $w^\ve$. 
Then $w^\ve(x) =\ln(|\hat x|/(\ve\rho))\big[\ln(r_\ve/(\ve \rho))\big]^{-1}$ for $x \in  \Omega^\ve \cap \Omega_0^\ve $ and $w^\ve(x) = 0$ for $x\in \widetilde \Omega^\ve_{L+\ve}$.  The assumption  on the relation between~$\ve$ and $a_\ve = r_\ve / \ve$ implies
$$
\begin{aligned}
&  \int_{\Omega^\ve_L \setminus \widetilde \Omega^\ve} \hspace{-0.2 cm } |\nabla w^\ve |^2 dx =  \frac 1{\ln(\ve \rho/r_\ve)^{2}} \int_{\Omega^\ve_L \setminus \widetilde \Omega^\ve} \hspace{-0.1 cm } \frac 1 {| \hat{x} |^2} dx \le \frac {2\pi \mu_1 L}{\ve^2\ln(\ve \rho/r_\ve)^{2}}  \int_{r_\ve}^{\ve \rho}    \frac {dr} r     \le  \mu, 
\\
&  \int_{\Omega^\ve_{0,L+\ve}} \hspace{-0.2 cm } |\nabla w^\ve |^2 dx \le  \mu_1 \ve \|\nabla  w^\ve \|^2_{L^2(\Omega^\ve_L \setminus \widetilde \Omega^\ve)} + 
 \frac {\mu_2}{ \ve}  \|w^\ve \|^2_{L^2(\Omega^\ve_L \setminus \widetilde \Omega^\ve)}  \le \mu\, \ve,  
 \end{aligned}
$$
 for some constant $\mu >0$ independent of $\ve$. This, together  with similar arguments as in Theorem~\ref{th:main1_rig}, implies that   $w^\ve \rightharpoonup 0$ weakly in $H^1(\Omega)$ and  strongly in $H^s(\Omega)$ for $s < 1$.

To prove convergence of solutions of problem  \eqref{main_1}--\eqref{bc_hair}, \eqref{bc_rest},  \eqref{ic},   by using the monotonicity of $g$, we  rewrite its weak formulation \eqref{main_weak} as variational inequality
\begin{equation} \label{ine_micro}
\begin{aligned}
\int_{\Omega^\ve_T}\Big[ \partial_t u_\ve ( \phi - u_\ve) + D_u\nabla \phi \nabla(\phi - u_\ve) \Big]dx dt +  \frac{\ve^2 \kappa }{r_\ve}  \int_{\Gamma^\ve_T}  g(\phi) (\phi - u_\ve) d\gamma^\ve dt \\
+ \int_{\Gamma^\ve_{R,T}} \beta \,  \phi \, ( \phi - u_\ve) d\gamma^\ve dt  
\ge 0  
\end{aligned} 
\end{equation}
for any $\phi \in L^2(0,T; H^1(\Omega^\ve))\cap L^\infty((0,T)\times\Omega^\ve)$, with $ \phi(t,x) \geq - \tilde \varsigma$ in $(0,T)\times \Omega^\ve$. Notice that the last condition on $\phi$ is not needed if $g$ is monotone on $\mathbb R$. 

Considering  $\phi = \psi -  \tilde \kappa g(h) w^\ve$, for $\psi \in C^1([0,T]; C^1(\overline\Omega))$ 
with $\psi(t,x) \geq -\tilde \varsigma$ in $[0,T]\times\overline \Omega$,  as a test function in~\eqref{ine_micro}, where $\tilde \kappa = \lambda \kappa/D_u$  and $h$ is  the solution of $h+ \tilde \kappa g(h) = \psi$, and using the weak and strong convergence   of $w^\ve$  and  of extension of $u_\ve$, in the corresponding spaces, together with  $|\Omega \setminus \Omega^\ve| \to 0$ as $\ve \to 0$,   we obtain 
$$
\begin{aligned}
%&\lim\limits_{\ve \to 0 } \int_{\Omega^\ve_{T}} \partial_t  (\psi -\tilde  \kappa g(h) w^\ve) (  \psi - \tilde \kappa g(h) w^\ve- u_\ve) dx dt = \int_{\Omega_T} \partial_t \psi (\psi - u_0) dx dt, \\
& \lim\limits_{\ve \to 0 }  \int_{\Omega^\ve_T} \partial_t u_\ve (\psi -  \tilde \kappa g(h) w^\ve - u_\ve) dxdt =  \int_{\Omega_T} \partial_t u_0 (\psi - u_0) dxdt, \\
&\lim\limits_{\ve \to 0 } \int_{\Gamma^\ve_{R,T}} \beta (\psi - \tilde \kappa g(h) w^\ve) ( \psi -\tilde  \kappa g(h) w^\ve - u_\ve) d\gamma^\ve dt
= \int_{\Gamma_{R,T}} \beta \, \psi ( \psi - u_0) d\hat x dt.
\end{aligned}
$$
Here and in what follows we use the same notation for   $u_\ve$  and its extension. For the second term in \eqref{ine_micro}, the weak convergence of $\nabla u_\ve$  and   $|\Omega \setminus \Omega^\ve| \to 0$, as $\ve \to 0$, yield 
$$
\begin{aligned} 
\lim\limits_{\ve \to 0 } \int_{\Omega^\ve_T}  D_u\nabla ( \psi - \tilde \kappa g(h) w^\ve) \nabla( \psi -\tilde  \kappa g(h) w^\ve - u_\ve) dx dt
= \int_{\Omega_T} D_u \nabla \psi \nabla (\psi - u_0) dx dt 
\\ - \lim\limits_{\ve \to 0 } \int_{\Omega^\ve_T} D_u \tilde \kappa (\nabla g(h) w^\ve +  g(h) \nabla  w^\ve) \nabla( \psi - \tilde \kappa g(h) w^\ve - u_\ve) dx dt. 
\end{aligned} 
$$
For the first part of the  last term the strong convergence of $w^\ve$  and weak convergence of $\nabla w^\ve $ and $\nabla u_\ve$ in $L^2(\Omega_T)$ ensure
$$
\begin{aligned} 
 \lim\limits_{\ve \to 0 } \int_{\Omega^\ve_T} D_u  \tilde \kappa \nabla g(h) w^\ve  \nabla( \psi -\tilde  \kappa g(h) w^\ve - u_\ve) dx dt = 0,  
\end{aligned} 
$$
and the second part can be rewritten as 
 $$
\begin{aligned} 
 \int_{\Omega^\ve_T}  \hspace{-0.1 cm } D_u\tilde \kappa \big[\nabla  w^\ve  \nabla \big(g(h)[ \psi -\tilde  \kappa g(h) w^\ve - u_\ve]\big)   - 
   \nabla  w^\ve  \nabla g(h)( \psi - \tilde \kappa g(h) w^\ve - u_\ve) \big] dx dt  \\= I_1 + I_2,  
\end{aligned} 
$$
where $\lim\limits_{\ve \to 0} I_2 = 0$, due to weak convergence of $\nabla w^\ve$ and strong convergence of $u_\ve$ and $w^\ve$ in $L^2(\Omega_T)$.  Using that 
$\Delta w^\ve =0$ in $\Omega^\ve \cap \Omega^\ve_0$  and $\nabla w^\ve =0$ in $\Omega^\ve\setminus (\Omega_0^\ve \cup \Omega_{0, L+\ve}^\ve)$ and  integrating by parts in $I_1$ yield 
$$
I_1 = \frac{\lambda \kappa}{\lambda + \ve^2 \ln(\rho)}  \Big[  \frac{\ve^2}{r_\ve}  \int_{\Gamma_{T}^\ve}  \hspace{-0.1 cm }   g(h)( \psi - \tilde \kappa g(h)- u_\ve) d\gamma^\ve dt - \frac{\ve} {\rho} \int_{\Gamma_{0, T}^\ve} \hspace{-0.1 cm }   g(h)( \psi -  u_\ve) d\gamma^\ve dt \Big]+ I_{11}, 
$$
where, due to   $\lim\limits_{\ve \to 0 } \|\nabla w^\ve\|_{L^2(\Omega^\ve_{0, L+\ve})} = 0$, we have 
$$
I_{11} = \int_0^T\int_{\Omega^\ve_{0, L+\ve}} D_u\tilde \kappa \nabla  w^\ve  \nabla (g(h)[ \psi -\tilde  \kappa g(h) w^\ve - u_\ve]) dx dt \to 0\; \; \text{ as } \; \; \ve \to 0.  
$$
 Similar as in the proof of Theorem~\ref{th:main1_rig}, using 
the two-scale convergence on $\Gamma_0^\ve$, see e.g.~\cite{Allaire_1996, Radu_1996}, and that  $\lim\limits_{\ve \to 0 }\ve \| u_\ve - u_0\|^2_{L^2(\Gamma^\ve_{0,T})} =0$, see e.g.~\cite{Ptashnyk2010b}, we obtain
$$
\begin{aligned} 
\lim\limits_{\ve \to 0} \ve \frac{\lambda (\kappa/\rho)}{\lambda + \ve^2 \ln(\rho)} 
\int_{\Gamma_{0, T}^\ve} \hspace{-0.2 cm }   g(h)( \psi -  u_\ve) d\gamma^\ve dt 
= \lim\limits_{\ve \to 0}    \frac{\lambda (\kappa/\rho)}{\lambda + \ve^2 \ln(\rho)} \,  \ve  \int_{\Gamma_{0, T}^\ve} \hspace{-0.2 cm }  g(h)(u_0- u_\ve) d\gamma^\ve dt 
\\ +  \lim\limits_{\ve \to 0}   \frac{\lambda (\kappa/\rho)}{\lambda + \ve^2 \ln(\rho)} \,   \ve \int_{\Gamma_{0, T}^\ve} \hspace{-0.2 cm }  g(h)( \psi -  u_0) d\gamma^\ve dt 
 = 2\pi \kappa \int_{\Omega_{L,T}} \hspace{-0.2 cm } g(h) ( \psi - u_0) dx dt.
\end{aligned} 
$$  
Notice that the regularity $g(h) \in C^1([0,T]; C^1(\overline \Omega))$, ensured by the regularity of $g$ and~$\psi$, and  the trace estimate  $\ve \|v \|^2_{L^2(\Gamma_{0}^\ve)}\leq \mu \| v \|^2_{H^1(\Omega_L)}$, see e.g.~\cite{Ptashnyk2010b}, yield   
 $$
\begin{aligned} 
  & \Big|\frac{\lambda (\kappa/\rho)}{\lambda + \ve^2 \ln(\rho)} \,  \ve  \int_{\Gamma_{0, T}^\ve} \hspace{-0.3 cm }  g(h)(u_0- u_\ve) d\gamma^\ve dt\Big| \leq \mu_1 \ve^{\frac 1 2} \|u_0- u_\ve\|_{L^2(\Gamma_{0, T}^\ve)} \| g(h)\|_{L^2(0, T; H^1(\Omega))}, \\
 % \leq  \mu_2 \ve^{\frac 1 2} \|u_0- u_\ve\|_{L^2(\Gamma_{0, T}^\ve)}, \\
  & \ve\Big\| \frac{\lambda (\kappa/\rho)}{\lambda + \ve^2 \ln(\rho)} ( \psi -  u_0) \Big\|^2_{L^2(\Gamma_{0, T}^\ve)} \leq  \mu_2 \big[ \|u_0\|^2_{L^2(0,T; H^1(\Omega))} +  \|\psi\|^2_{L^2(0,T; H^1(\Omega))} \Big] \leq \mu_3,
\end{aligned}
$$
for  $0<\ve \leq \ve_0$, with $\lambda + \ve_0^2 \ln(\rho)>0$ and $0< \rho < 1/2$. 
It remains  to show  that 
$$
 \frac{\kappa\ve^2}{r_\ve}   \int_{\Gamma^\ve_T} \Big(g( \psi - \tilde  \kappa g(h) )  - \frac{\lambda }{\lambda + \ve^2 \ln(\rho)}  g(h)\Big)[ \psi -  \tilde  \kappa g(h)  - u_\ve] d\gamma^\ve dt \to 0  \; \; \text{as } \; \ve \to 0. 
 $$
 Since $h$ is the solution of $h + \tilde \kappa g(h)= \psi$ and $g$ is monotone and continuous we have 
 $$
 \frac{\kappa\ve^2}{r_\ve}   \int_{\Gamma^\ve_T} [g( \psi - \tilde  \kappa g(h) )  -  g(h)][ \psi -  \tilde \kappa g(h)  - u_\ve] d\gamma^\ve dt =0. 
 $$ 
The trace estimate  \eqref{trace} yields  
 $$
 \begin{aligned} 
& \Big[\frac{\lambda }{\lambda + \ve^2 \ln(\rho)} - 1 \Big] \frac{\kappa\ve^2}{r_\ve}   \int_{\Gamma^\ve_T} | g(h)|
 | \psi -  \tilde \kappa g(h)  - u_\ve| d\gamma^\ve dt   \le \mu  \Big[\|h\|^2_{L^2(0,T; H^1(\widetilde \Omega^\ve_L))} \\ 
 & \quad + \|\psi\|^2_{L^2(0,T; H^1(\widetilde \Omega^\ve_L))} + \|u_\ve\|^2_{L^2(0,T; H^1(\widetilde \Omega^\ve_L))} +1 \Big]\Big[ \frac{\lambda }{\lambda + \ve^2 \ln(\rho)} -1 \Big]\to 0,
 \; \text{ as } \ve \to 0.
 \end{aligned}  
 $$

Collecting all calculations from above, taking the limit as $\ve \to 0$ in  \eqref{ine_micro},   with $\phi = \psi - \tilde \kappa g(h) w^\ve$, and employing  a density argument, we obtain 
\begin{equation} \label{ine_macro}
\begin{aligned}
\int_{\Omega_T} \big[\partial_t u_0 ( \psi - u_0) + D_u\nabla \psi \nabla(\psi - u_0) \big] dx dt 
+   \int_{\Omega_{L,T}} 2 \pi \kappa g(h) (\psi - u_0) dx dt \\
+ \int_{\Gamma_{R,T}} \beta \,  \psi \, ( \psi - u_0) d\hat x dt
\ge 0  
\end{aligned} 
\end{equation}
for any $\psi \in L^2(0,T; H^1(\Omega))\cap L^\infty((0,T)\times \Omega)$. 
 By choosing $\psi = u_0  \pm \sigma \varphi $, for $\sigma >0$ and $\varphi \in L^2(0,T; H^1(\Omega))\cap L^\infty((0,T) \times \Omega)$, and letting $\sigma \to 0$ we obtain that $u_0$ is a solution of the macroscopic problem \eqref{eqn:homo_nonlin}. Since $u_0 \geq 0$ we have $\psi \geq - \tilde \varsigma$ for sufficiently small~$\sigma$. Standard calculations ensure uniqueness of a solution of \eqref{eqn:homo_nonlin}.  

If $K= \kappa/ a_\ve$ and $\ve \ln(1/a_\ve) =  \lambda$, we again rewrite~\eqref{main_1}--\eqref{bc_hair},~\eqref{bc_rest},~\eqref{ic} as variational  inequality~\eqref{ine_micro}. The convergence,  as $\ve \to 0$, of the first two terms and of the last integral in \eqref{ine_micro} follows directly from the weak convergence  $u_\ve \rightharpoonup u_0 $ in $L^2(0,T; H^1(\Omega)) \cap H^1(0,T; L^2(\Omega))$ and $|\Omega\setminus \Omega^\ve| \to 0$ as $\ve \to 0$. 
To show  
\begin{equation}\label{converg_bound_22}
\lim\limits_{\ve \to 0} \frac{\ve^2 \kappa }{r_\ve}  \int_{\Gamma^\ve_T}  g(\phi) (\phi - u_\ve) d\gamma^\ve dt =   2 \pi \kappa  \int_{\Omega_{L,T}}  g(\phi) (\phi - u_0) dx dt 
\end{equation}
we consider  the solution of the following problem  
$$
\nabla\cdot(D_u \nabla \tilde w^\ve) = 0 \text{ in }  B_{\ve\rho} \setminus \overline B_{r_\ve}, \; \;  D_u \nabla \tilde w^\ve \cdot \nu = \frac {\ve^2 \kappa }{r_\ve} \text{ on } \partial B_{r_\ve},  \; \;  
\tilde w^\ve = 0 \text{ on } \partial B_{\ve \rho},  
$$
given by $\tilde w^\ve = \ve^2 (\kappa/D_u) \ln(|\hat x|/(\ve \rho))$,  extended in a trivial way to $(B_{\ve\rho} \setminus \overline B_{r_\ve})\times (0,L)$ and then  $\ve Y$- periodically into $\Omega^\ve\cap\Omega^\ve_0$. Notice  $ |\tilde w^\ve(x)| \leq (\kappa/ D_u) \ve^2 \ln(\ve\rho/ r_\ve)   \le \mu \,  \ve$, for all $x\in \Omega^\ve\cap \Omega^\ve_0$, and 
$$
\int_{\Omega^\ve\cap\Omega_0^\ve} |\nabla \tilde w^\ve|^2 dx \le  \mu_1  \ve^2 
 \int_{r_\ve}^{\ve \rho} \frac 1 r dr  \le \mu\, \ve,  
$$
with a constant $\mu>0$ independent of $\ve$. Then 
$$
\begin{aligned} 
0=  -  \int_0^T \hspace{-0.1 cm }  \int_{\Omega^\ve\cap\Omega^\ve_{0}}\hspace{-0.5 cm}  \nabla\cdot (D_u \nabla \tilde w^\ve ) g(\phi) (\phi - u_\ve)  dx dt 
=\int_0^T  \hspace{-0.1 cm }  \int_{\Omega^\ve\cap\Omega_0^\ve}  \hspace{-0.5 cm}  D_u \nabla \tilde w^\ve \nabla \big[g(\phi) (\phi - u_\ve)  \big] dx dt \\ 
+  \frac{\ve^2 \kappa }{r_\ve}  \int_{\Gamma^\ve_T}  g(\phi) (\phi - u_\ve) d\gamma^\ve dt   - \ve  \frac{ \kappa}\rho  \int_{\Gamma^\ve_{0,T}}  g(\phi) (\phi - u_\ve) d\gamma^\ve dt. 
 \end{aligned} 
$$
 Hence taking  in the last equality the limit as $\ve \to 0$ and using weak convergence of $u_\ve$ in $L^2(0,T; H^1(\Omega))$ and two-scale convergence on $\Gamma^\ve_0$, together with  the fact  that 
 $\lim\limits_{\ve \to 0}\|\nabla \tilde w^\ve \|_{L^2(\Omega^\ve\cap \Omega^\ve_0)} = 0$, 
 imply \eqref{converg_bound_22}. 
 By choosing $\phi = u_0  \pm \sigma\varphi $, for $\sigma >0$ and $\varphi \in L^2(0,T; H^1(\Omega))\cap L^\infty((0,T) \times \Omega)$, and letting $\sigma \to 0$ we obtain that $u_0$ is the solution of the macroscopic problem~\eqref{eqn:first_nonlin_1}.
 Notice that in the case $\ve \ln(1/a_\ve) =  \lambda$ we can also show convergence of solutions of   \eqref{main_1}--\eqref{bc_hair},~\eqref{bc_rest},~\eqref{ic} directly, without rewriting it as a variational inequality and using monotonicity of $g$. 
\end{proof}

\section{Numerical simulations for multiscale and macroscopic models}
\label{sec_numerics}
In this section we present numerical simulations of \eqref{main_1}--\eqref{bc_hair}, \eqref{bc_rest},  \eqref{ic} and of the zero, first and second order approximations of solutions of the macroscopic problems, see \eqref{eqn:homo_dist},  \eqref{eqn:homo_dist_1}, \eqref{eq_DO_new}. All simulations in this section were performed using standard finite element methods as implemented in FEniCS \cite{logg2011}, with meshed domains generated using NETGEN \cite{schoberl1997}. Steady-state (elliptic) problems were solved directly, while for time-dependent (parabolic) problems, backwards Euler discretization in time was used and the solution at time $t+ \Delta t$ was calculated using the  stationary solver with the solution at time $t$ entering the right-hand side of the weak formulation as a given forcing term (as described in \cite{logg2011}). Since the scale-relation $\ve^{2} \ln{\big(1/a_\ve \big)} = \lambda$ for small $\ve$ results in a very small value for $a_\ve$, which is numerically challenging, we consider (only) $\ve = 0.5$ and observe that $a_\ve = 0.01$ with such $\ve$ gives $\lambda = \ve^{2} \ln{\big(1/a_\ve \big)} \approx 1.15$. Continuous Galerkin finite element method of degree $1$ was used and tetrahedral meshes for the full-geometry simulations were created using in-built NETGEN generators with automatic mesh refinement close to the root hair, so that the size of any tetrahedron does not exceed $0.03$, which in the %most-frequently-studied 
case of $a_\ve = 10^{-3}$ (see below) yielded $O(7 \times 10^5)$ tetrahedra. For the macroscopic problems in our two-scale expansions (i.e. $u_0$, $u_1$ and $U_2$), we generated meshes with the maximum mesh size of $0.05$, which yielded $O(14000)$ tetrahedra for the  mesh for domain $\Omega$, and $O(7000)$ for the mesh for domain $\Omega_L$.

We first consider the steady-state problem for  equation \eqref{main_1}, imposing a constant level of nutrient at the cut-off distance
\begin{equation}\label{dirichlet_bc_ff}
u_\ve(t,x) = 1  \quad  \; \;  \text{ on } \;   x_{3} = M, \; t>0,
\end{equation}
and a zero-flux boundary condition on $\partial \Omega \setminus \{ x_3 = M\}$, i.e.\ $\beta = 0$.
Then in the corresponding macroscopic problem we have
$$
 u_0(t,x) = 1\; \; \;  \text{ on } \;   x_{3} = M,  \quad D_u \nabla u_0(t,x) \cdot {\bf n} = 0 \; \; \; \text{ on } \partial \Omega \setminus \{ x_3 = M\}, \; \; t>0.
 $$
Notice that the choice of boundary condition on $x_{3} = M$ does not affect the derivations of macroscopic equations in Sections~\ref{formal_derivation} and~\ref{convergence}.
 The symmetries of the full-geometry problem and the periodicity of the microstructure ensure that the solution of this problem has the same behavior in each periodicity cell $\ve(Y + \xi)\times (0,M)$, for $\xi \in \mathbb Z^2$, see Figure~SM1 %~\ref{fig:isosurfaces_eps_05} 
 in the Supplementary materials. Hence it is sufficient to determine the solution within a single periodicity  cell $\ve Y\times (0,M)$.

To illustrate the differences in the behavior of the multiscale solutions and those of the corresponding macroscopic problems  \eqref{eqn:first_nonuni_final} and  \eqref{eqn:homo_dist}
for two different scale-relations between $\ve$ and $a_\ve$, we vary $a_\ve$ from $10^{-1}$ to $10^{-3}$, see Figure \ref{fig:comparison_dist_limit}. The default parameter values used throughout this section are summarized in Table~\ref{table_1}.
\begin{figure}
\subfloat[$u_0$ for $\ve \ln{(1/a_\ve)} = O(1)$]{\includegraphics[width=0.5\textwidth]{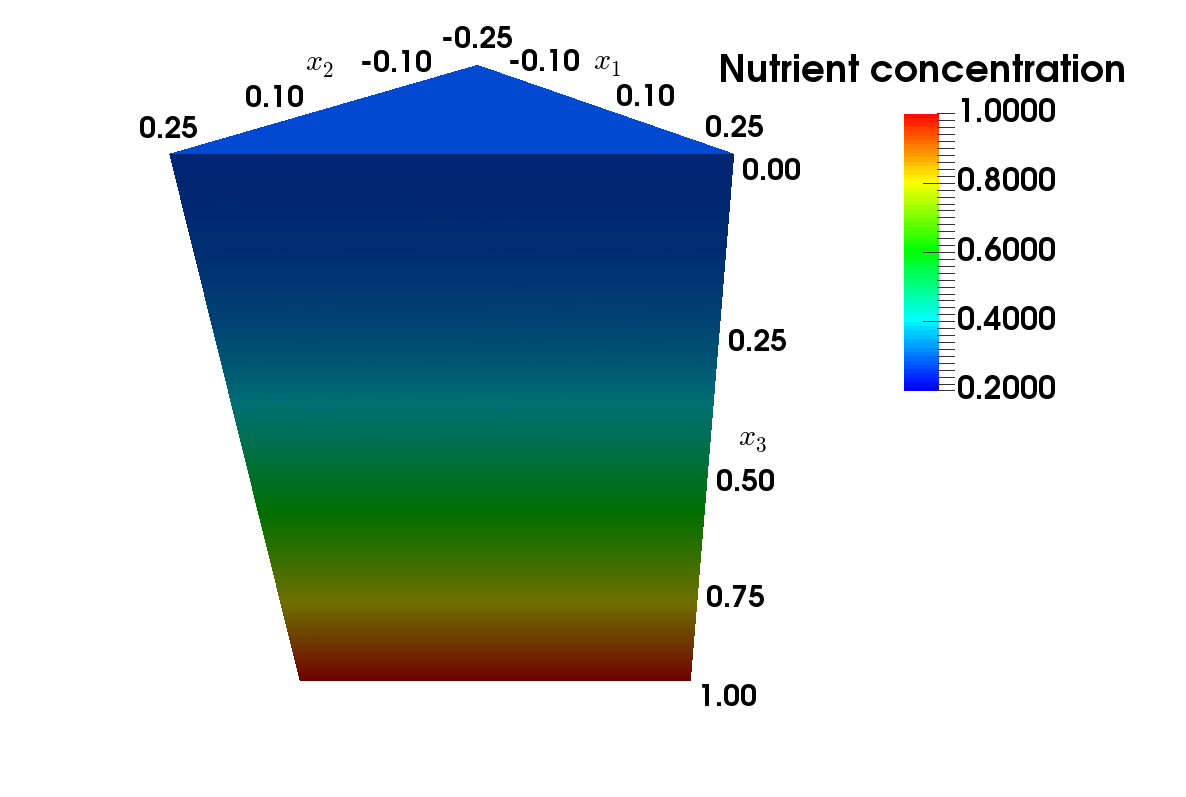}}
\subfloat[$u_\ve$ for $a_\ve=10^{-1}$ ($\ve^{2} \ln{(1/a_\ve)} \approx 0.58$)]{\includegraphics[width=0.5\textwidth]{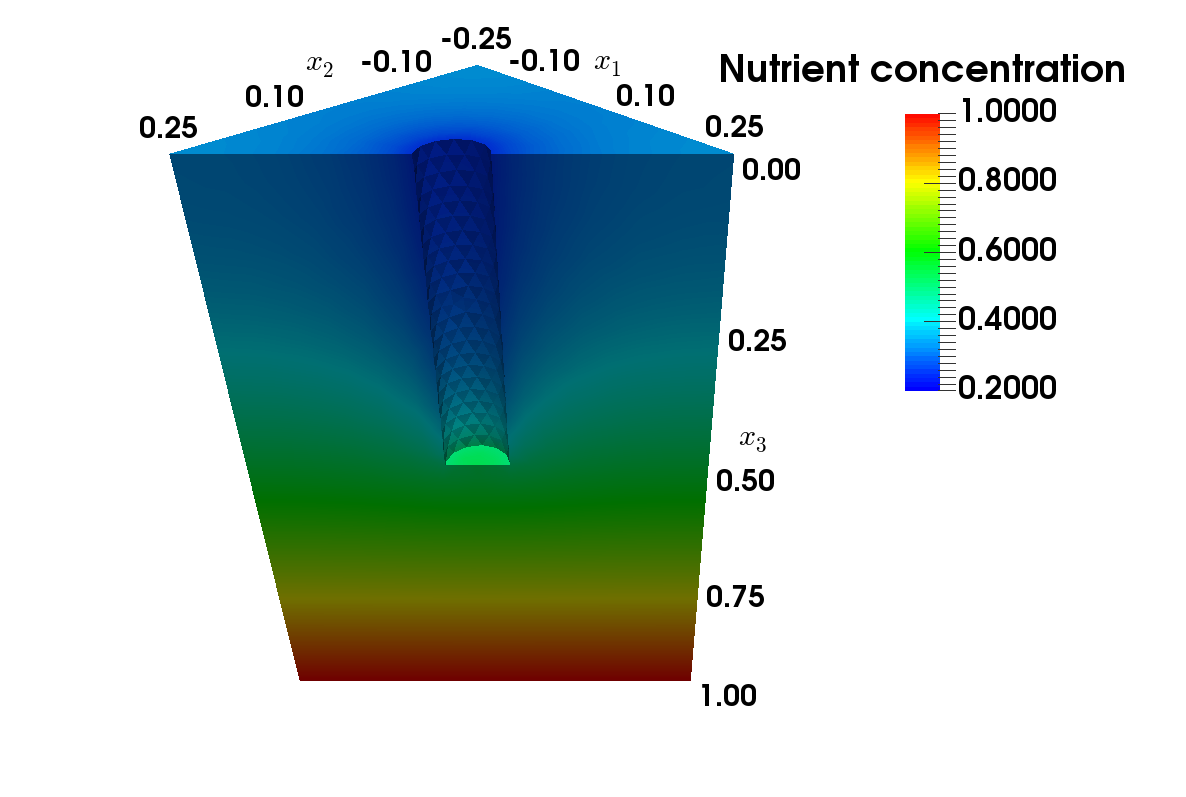}}
\\
\subfloat[$u_\ve$ for $a_\ve=10^{-2}$ ($\ve^{2} \ln{(1/a_\ve)} \approx 1.15$)]{\includegraphics[width=0.5\textwidth]{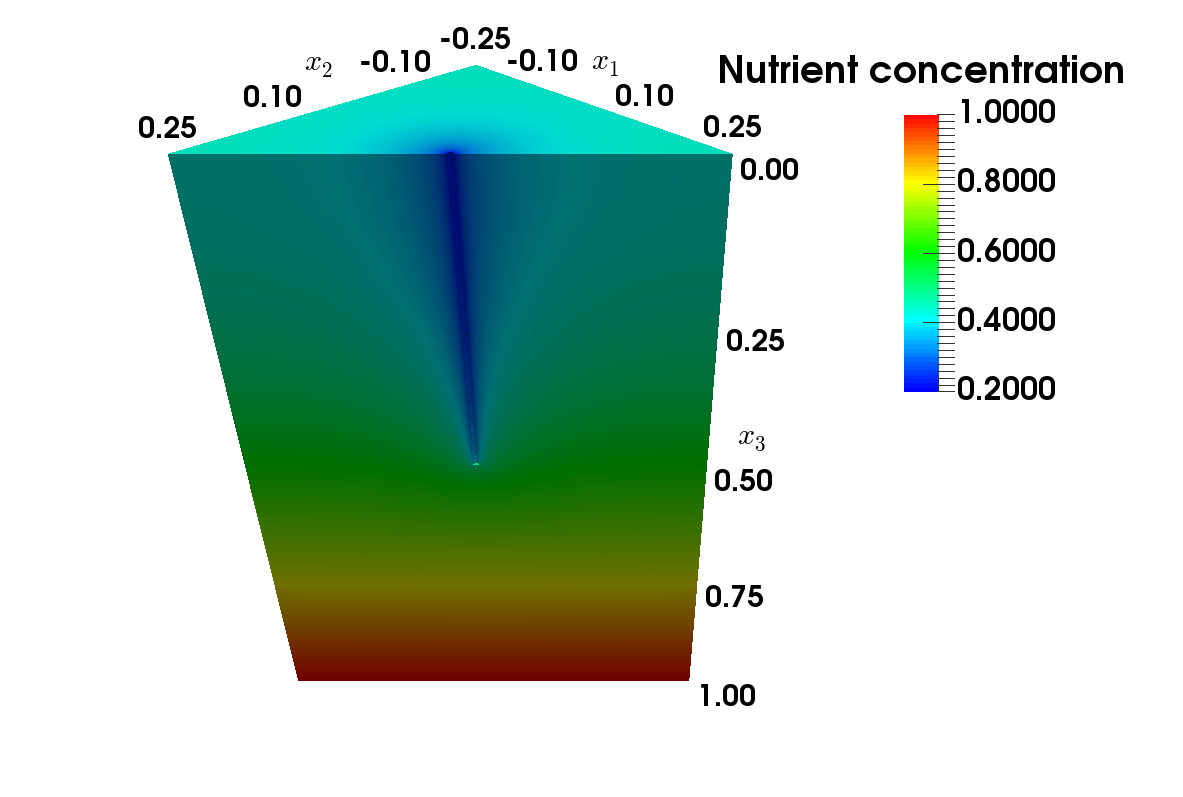}}
\subfloat[$u_\ve$ for $a_\ve=10^{-3}$ ($\ve^{2} \ln{(1/a_\ve)} \approx 1.73$)]{\includegraphics[width=0.5\textwidth]{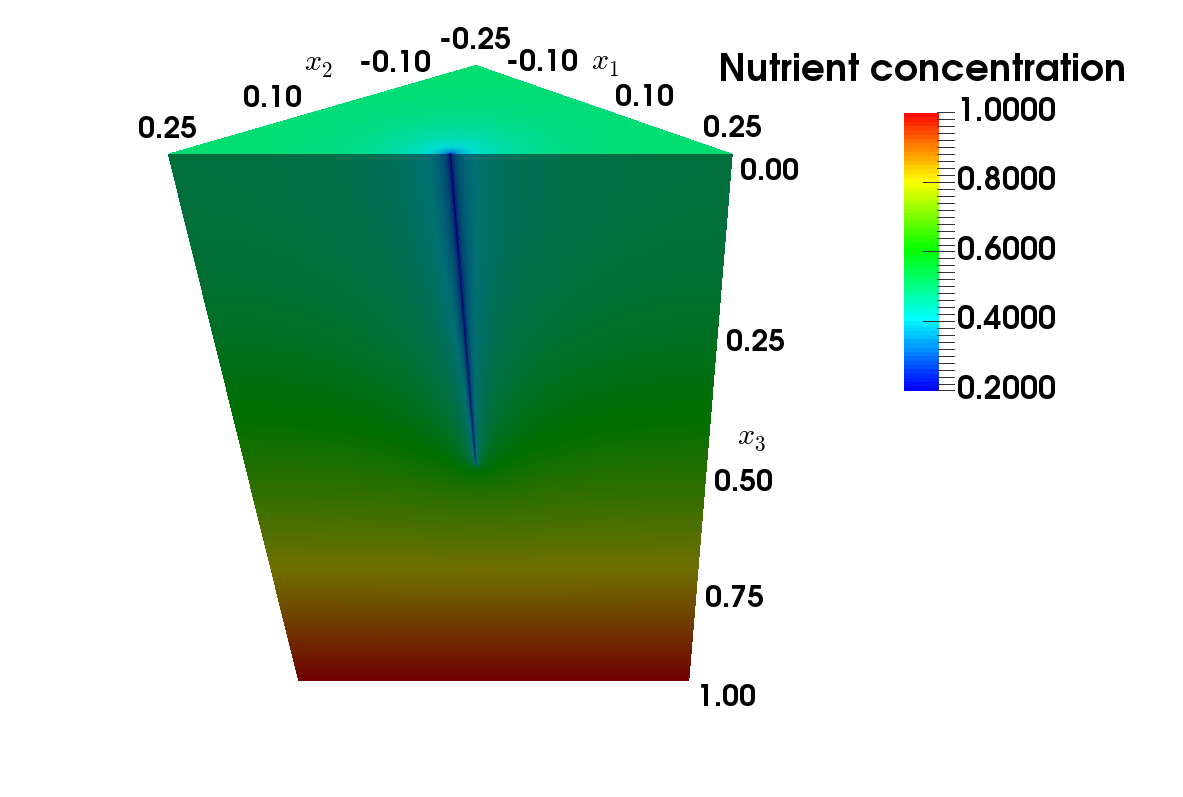}}
\caption{Steady-state solutions of the macroscopic problem \eqref{eqn:first_nonuni_final}, (a), and of the full model \eqref{main_1}--\eqref{bc_hair}, \eqref{bc_rest},  \eqref{ic}, for (b) $a_\ve=10^{-1}$, (c) $a_\ve=10^{-2}$ and (d) $a_\ve=10^{-3}$, with Dirichlet boundary condition \eqref{dirichlet_bc_ff},  $g(u_\ve) = u_\ve$, all other parameters as in Table~\ref{table_1}.}
\label{fig:comparison_dist_limit}
\end{figure}

\begin{table}
\centering
\begin{tabular}{ccccccc}\hline
Parameter & $\ve$ & $L$ & $M$ & $\beta$ & $D_u$ & $\kappa$ \\ \hline
Value & 0.5 & 0.5 & 1.0 & 0.0 & 1.0 & 1.0 \\ \hline \\
\end{tabular}
\caption{Default dimensionless parameter values used in numerical simulations.}
\label{table_1}
\end{table}
For $a_\ve=10^{-1}$ (Figure~\ref{fig:comparison_dist_limit}(b)), the steady-state solution of problem  \eqref{eqn:first_nonuni_final} (Figure~\ref{fig:comparison_dist_limit}(a)) gives a good averaged approximation to that of \eqref{main_1}--\eqref{bc_hair}, \eqref{bc_rest},  \eqref{ic},  whereas for $a_\ve=10^{-2}$ and $a_\ve=10^{-3}$ (Figure~\ref{fig:comparison_dist_limit}(c,d)) the differences between the solution of the macroscopic problem~\eqref{eqn:first_nonuni_final} and those of \eqref{main_1}--\eqref{bc_hair}, \eqref{bc_rest},  \eqref{ic} become more significant and, as  $\ve^{2} \ln{(1/a_\ve)}$ approaches $1$, the steady-state solution of the macroscopic problem \eqref{eqn:homo_dist} provides a better approximation to solutions of the full model, as predicted. The analysis in Section~\ref{sec:sparse_first_nonuni} implies that for any scale relations  satisfying $a_\ve \gg e^{-1/\ve^{2}}$ as $\ve \to 0$ the same macroscopic equation \eqref{eqn:first_nonuni_final} pertains.

We now compare these solutions at a fixed distance from the root surface. First, we fix $x_{3} = 0$ and plot the solutions along a diagonal joining the opposite corners of this plane. This way, we study behavior at the root surface, and the results for decreasing $a_\ve$ are shown in Figure $\ref{fig:comparison_dist_limit_2}$(a,c,e). Solutions of the full problem \eqref{main_1}--\eqref{bc_hair}, \eqref{bc_rest},  \eqref{ic}, (blue) show nutrient depletion zones close to the hair surface with increasingly sharp concentration gradients for a decreasing value of $a_\ve$ due to the scaling of the uptake constant \eqref{scaling_K}. Numerical simulations reveal that the steady-state solution of the macroscopic problem \eqref{eqn:first_nonuni_final} underestimates, and that of the macroscopic problem \eqref{eqn:homo_dist} overestimates, the averaged behavior of steady-state solutions of the full problem \eqref{main_1}--\eqref{bc_hair}, \eqref{bc_rest},  \eqref{ic}. While the solution of \eqref{eqn:first_nonuni_final} provides us with a better approximation to the full-geometry behaviour than that of \eqref{eqn:homo_dist} for $a_\ve = 10^{-1}$, the opposite is true for $a_\ve = 10^{-3}$, which confirms the validity of our asymptotic analysis results. Leading-order approximations (i.e. homogenized solutions) naturally cannot capture large depletion gradients present in full-geometry simulations near root hair surfaces. Comparison with higher-order approximations will be discussed later (see Figure~\ref{fig:comparison_lin_correctors}).

\begin{figure}
\centering
\subfloat[][\centering $x_3 = 0.0$, $a_\ve = 10^{-1}$]{\includegraphics[width=0.48\textwidth]{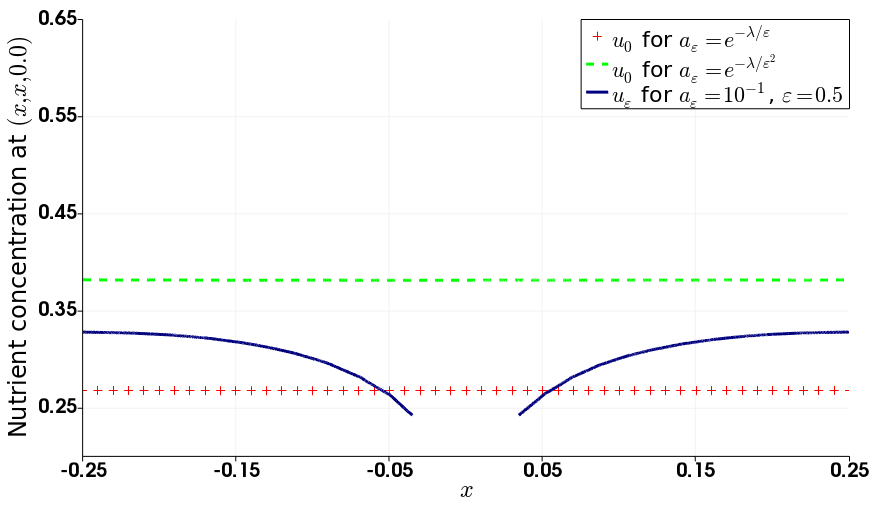}} 
\subfloat[][\centering $x_3 = 0.75$, $a_\ve = 10^{-1}$]{\includegraphics[width=0.48\textwidth]{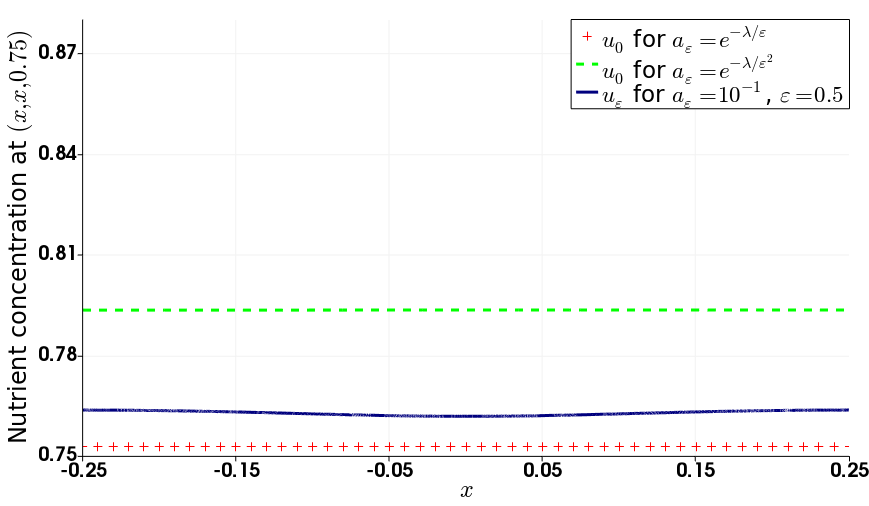}}
\\
\subfloat[][\centering $x_3 = 0.0$, $a_\ve = 10^{-2}$]{\includegraphics[width=0.48\textwidth]{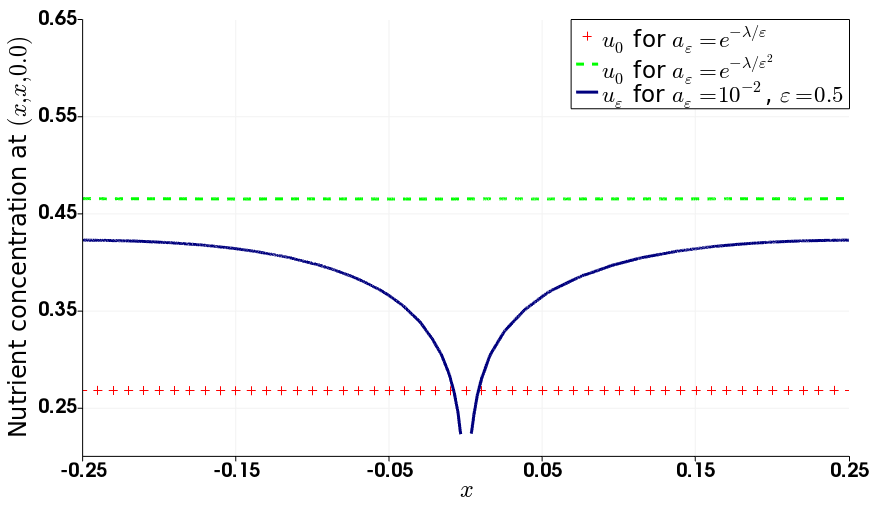}}
\subfloat[][\centering $x_3 = 0.75$, $a_\ve = 10^{-2}$]{\includegraphics[width=0.48\textwidth]{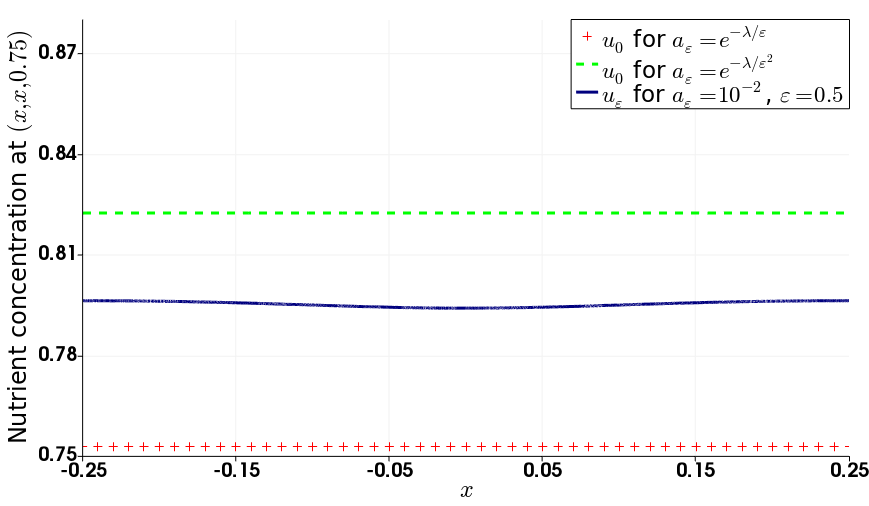}}
\\
\subfloat[][\centering $x_3 = 0.0$, $a_\ve = 10^{-3}$]{\includegraphics[width=0.48\textwidth]{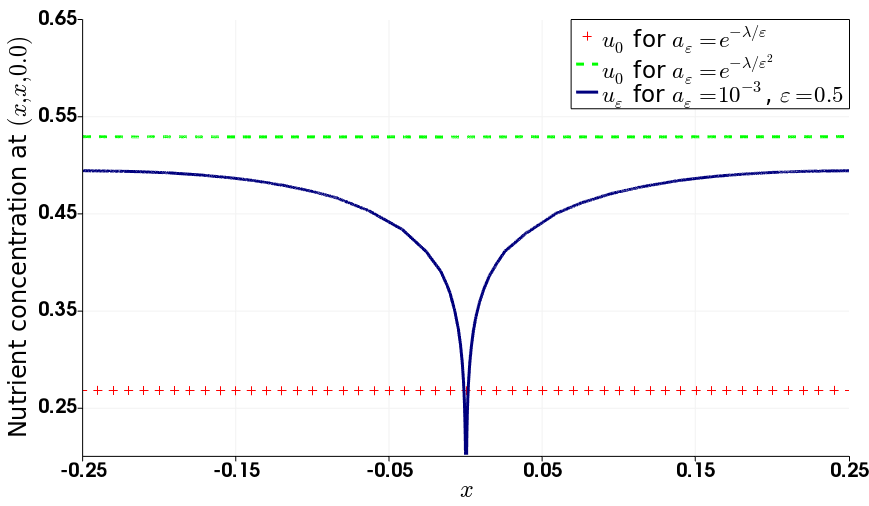}}
\subfloat[][\centering $x_3 = 0.75$, $a_\ve = 10^{-3}$]{\includegraphics[width=0.48\textwidth]{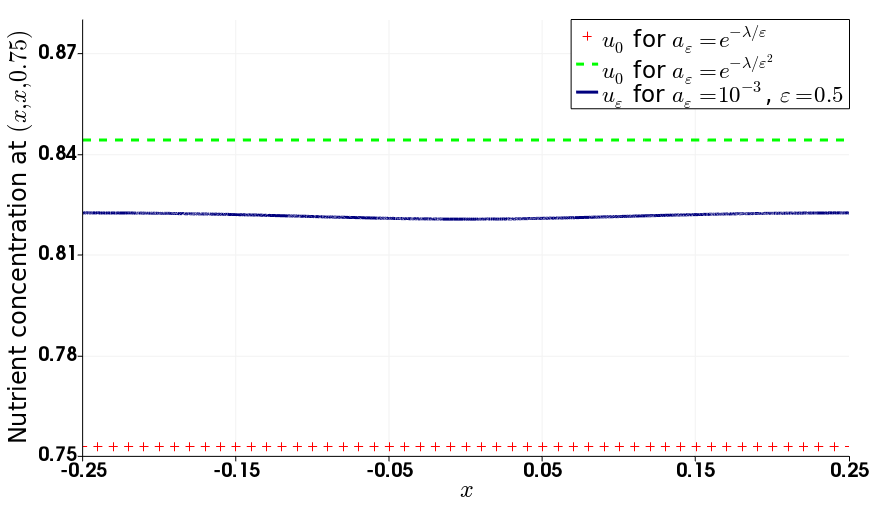}}
\\
\subfloat[][\centering $x_3 = 0.0$, $a_\ve = 10^{-3}$, $g(u) = u/(1+u)$]{\includegraphics[width=0.48\textwidth]{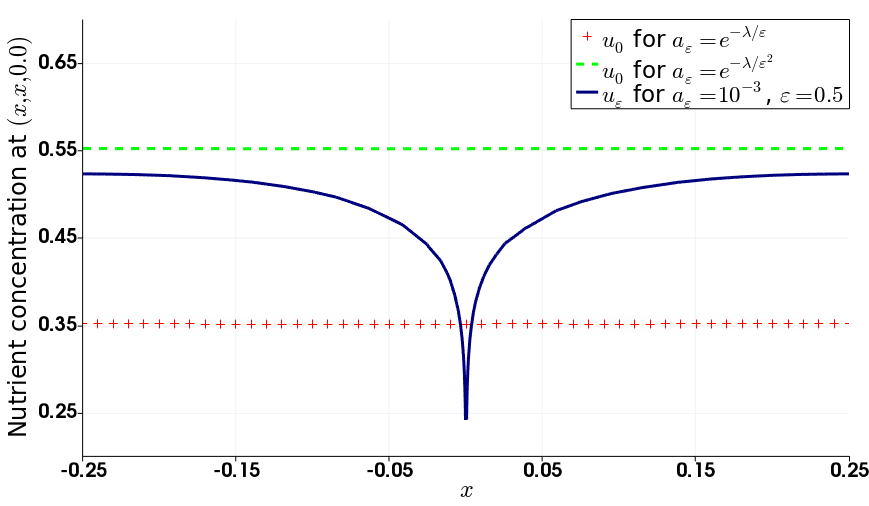}}
\subfloat[][\centering $x_3 = 0.75$, $a_\ve = 10^{-3}$, $g(u) = u/(1+u)$]{\includegraphics[width=0.48\textwidth]{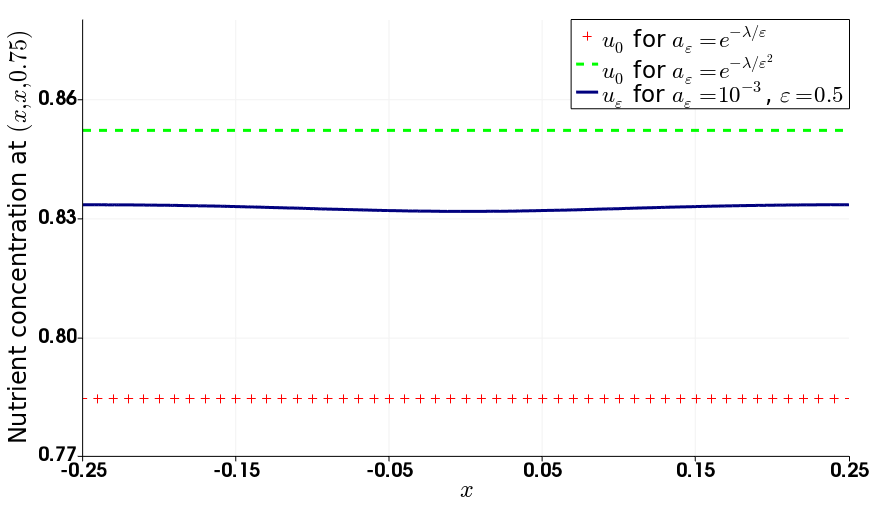}}
\caption{Steady-state solutions at the root surface $\{x_3=0\}$ (figures (a), (c) and (e)) and outside of the root-hair zone $\{x_3=0.75\}$ (figures (b), (d) and (f)) for \eqref{main_1}--\eqref{bc_hair}, \eqref{bc_rest},  \eqref{ic} (blue solid line), the problem \eqref{eqn:first_nonuni_final} (red crosses) and the problem \eqref{eqn:homo_dist} (green dashed line), with boundary condition~\eqref{dirichlet_bc_ff}, $g(u) = u$,  and all other parameters as in Table~\ref{table_1}. $a_\ve$ is decreased from $10^{-1}$ to $10^{-3}$. Figures (g) and (h) show comparisons for the nonlinear problem (with $g(u) = u/(1+u)$) to the problem \eqref{eqn:homo_nonlin}  (green dashed line; for the full form of the continuity equation, see \eqref{eqn:MM_dist_CE_explicit}), and the problem \eqref{eqn:first_nonlin_1}  (red crosses), using the same parameters and boundary conditions.}
\label{fig:comparison_dist_limit_2}
\end{figure}
\label{fig:comparison_MM_SS}

Simulation results at $x_{3} = 0.75$, i.e.\ outside the root hair-zone, see  Figure~\ref{fig:comparison_dist_limit_2}(b,d,f), demonstrate that as $a_\ve$ decreases and approaches the scale relation  
$\ve^2 \ln{(1/a_\ve)} = O(1)$, the steady-state solution of the macroscopic model \eqref{eqn:homo_dist} provides a better approximation to the full model~\eqref{main_1}--\eqref{bc_hair}, \eqref{bc_rest},  \eqref{ic} than that of~\eqref{eqn:first_nonuni_final}.

Numerical solutions to the steady-state problem for \eqref{main_1}--\eqref{bc_hair}, \eqref{bc_rest},  \eqref{ic} with a nonlinear boundary condition on $\Gamma^\ve$, i.e.\ with $g(u_\ve) = u_\ve/ (1+ u_\ve)$, and to the corresponding macroscopic problems \eqref{eqn:first_nonlin_1} and  \eqref{eqn:homo_nonlin} are also presented in Figure~\ref{fig:comparison_dist_limit_2}(g,h).  All model parameters are as in Table~\ref{table_1} and Picard iteration was used to solve the nonlinear problem (as described in \cite{logg2011}). Similar differences between solutions of the full model and the two macroscopic problems are observed in time-dependent solutions, see Figure~\ref{fig:comparison_MM_time} (note that we used a zero-flux boundary condition at $x_3 = M$ in this case,  modelling competition with a neighboring root at $x_3 = 2M$).

\begin{figure}
\subfloat[][\centering $x_3 = 0.75$, $g(u) = u/(1+u)$]{\includegraphics[width=0.49\textwidth]{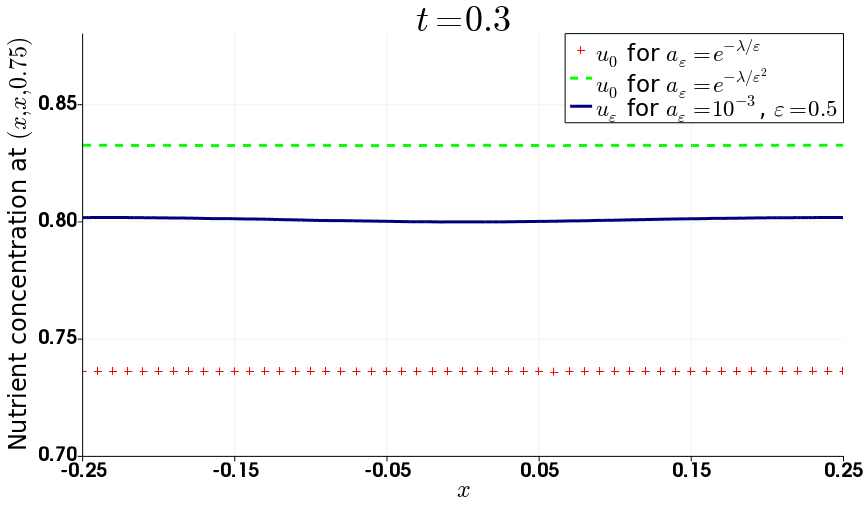}}
\subfloat[][\centering $x_3 = 0.75$, $g(u) = u/(1+u)$]{\includegraphics[width=0.49\textwidth]{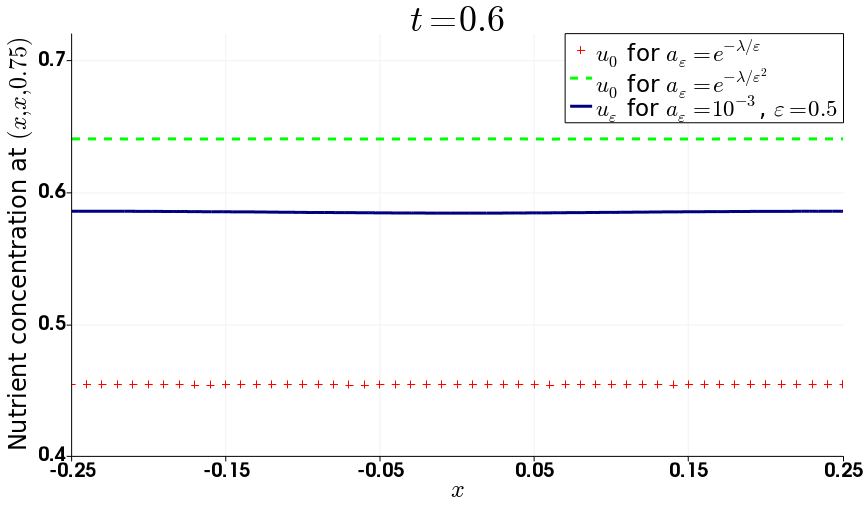}}
\\
\subfloat[][\centering $x_3 = 0.0$, $g(u) = u/(1+u)$]{\includegraphics[width=0.49\textwidth]{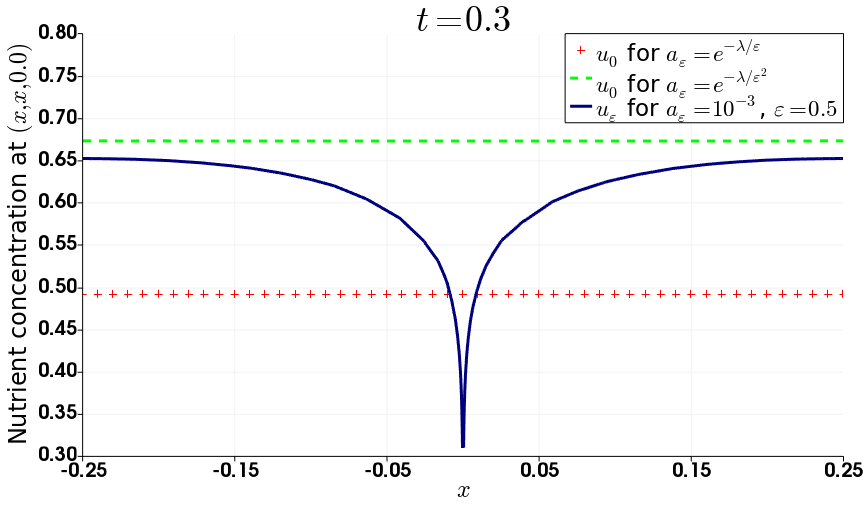}}
\subfloat[][\centering $x_3 = 0.0$, $g(u) = u/(1+u)$]{\includegraphics[width=0.49\textwidth]{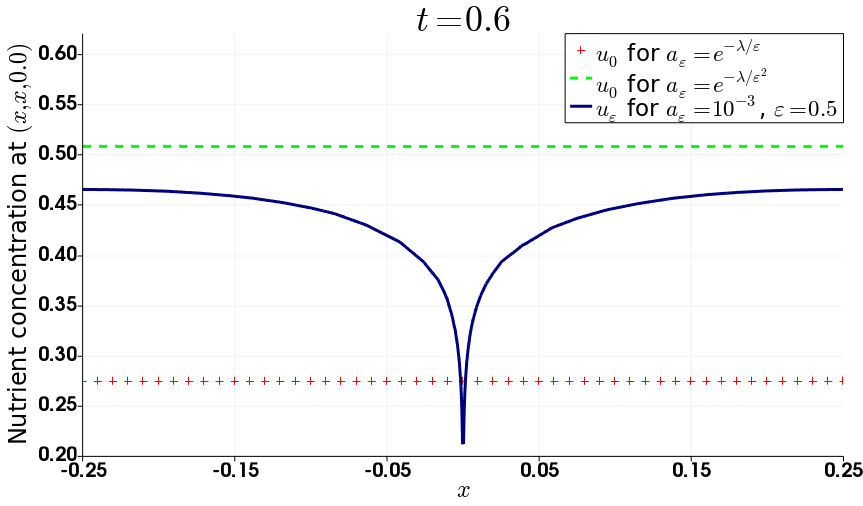}}
\caption{Numerical solutions for \eqref{main_1}--\eqref{bc_hair}, \eqref{bc_rest},  \eqref{ic} (blue solid line), the problem \eqref{eqn:homo_nonlin}  (green dashed line; for the full form of the continuity equation, see \eqref{eqn:MM_dist_CE_explicit}) and the problem \eqref{eqn:first_nonlin_1} (red crosses),  with $g(u) = u/(1+u)$ (figures (a), (b), (c) and (d)), and initial condition   $u_{\rm in} = 1$, all other parameters as in Table~\ref{table_1}. The time derivative is discretized using the backwards Euler method, with the time step of $0.01$.}
\label{fig:comparison_MM_time}
\end{figure}

Numerical solutions for the first and second order corrections, given by \eqref{eqn:first_nonuni_final_2}, \eqref{u_2_first_scale}, \eqref{eqn:homo_dist_1} and \eqref{u_2_second_scale}, for the two different scale relations between $\ve$ and $a_\ve$ are presented in  Figure~\ref{fig:comparison_lin_correctors}.
\begin{figure}
\subfloat[][\centering $x_3 = 0.0$, $g(u) = u$, correctors]{\includegraphics[width=0.49\textwidth]{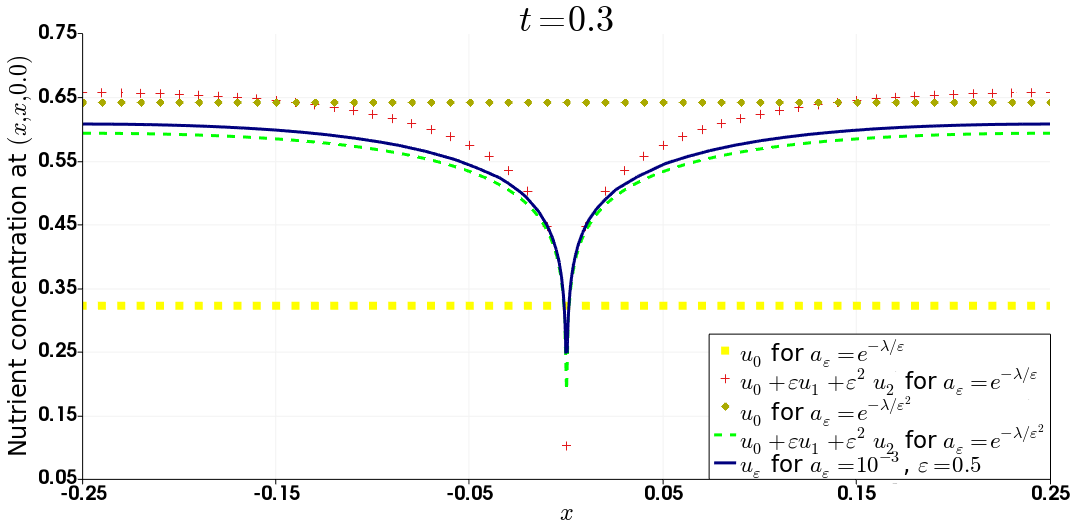}} 
\subfloat[][\centering $x_3 = 0.0$, $g(u) = u$, correctors]{\includegraphics[width=0.49\textwidth]{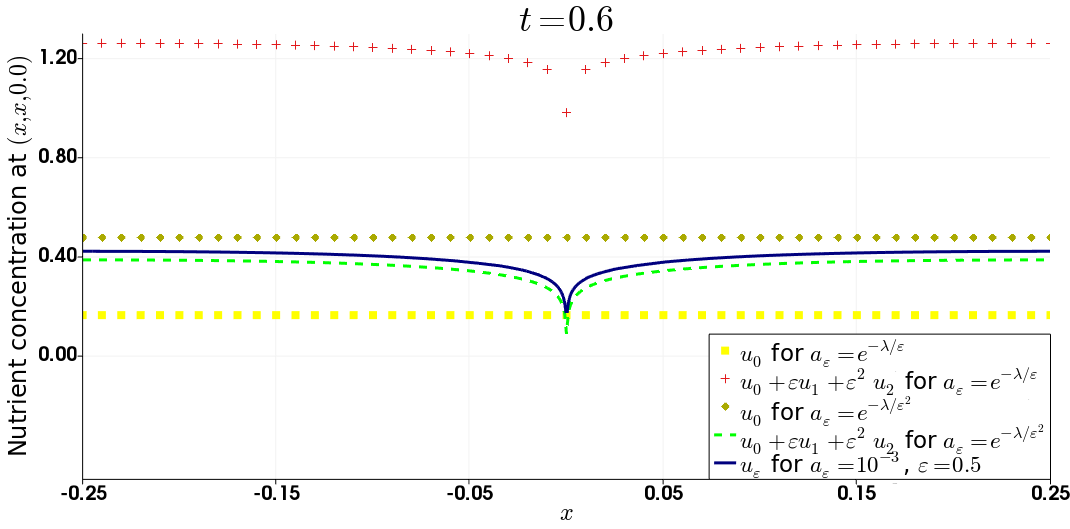}}
\caption{Figures (a) and (b) show comparison at the root surface $\{x_3=0\}$ for the linear problem \eqref{main_1}--\eqref{bc_hair}, \eqref{bc_rest},  \eqref{ic} (blue solid line) with the problem \eqref{eqn:homo_dist}  (brown diamonds), the problem  \eqref{eqn:first_nonuni_final}   (yellow squares), the second-order approximation
\eqref{eqn:first_nonuni_final} - \eqref{u_2_first_scale}  (red crosses), and with the second-order approximation \eqref{eqn:homo_dist} - \eqref{u_2_second_scale} (green dashed line), using the same initial condition and parameters as in Figure \ref{fig:comparison_MM_time}.}
\label{fig:comparison_lin_correctors}
\end{figure}
The differences between these illustrate the importance of the correct approximation. Since we chose our parameters so that $\ve^{2} \ln{\big(1/a_\ve \big)} = O(1)$ we have that solutions of \eqref{eqn:homo_dist}-\eqref{u_2_second_scale} provide better approximations to those of the full problem \eqref{main_1}--\eqref{bc_hair}, \eqref{bc_rest},  \eqref{ic} than solutions of \eqref{eqn:first_nonuni_final}-\eqref{u_2_first_scale}.

\section{Discussion}
\label{sec:discussion}
The analysis in Section~\ref{sec:independent_a_small} using two independent small parameters $\ve$ and $a$ uncovered the term $\ve^2  \ln(1/a) u_{0,0}(t,x) \psi_{-1}^O$, which causes problems relating to commutation of the two limits under consideration (see~\eqref{final_outer_separation}). Based on this observation, we then studied two scale relations given by $\ve \ln(1/a_\ve) = O(1)$ and $\ve^2 \ln(1/a_\ve) = O(1)$. In the $\ve \ln(1/a_\ve) = O(1)$ case, the mentioned term becomes $O(\ve)$, and thus it does not affect the leading-order problem~\eqref{eqn:first_nonuni_final}, but the $O(\ve)$ problem~\eqref{eqn:first_nonuni_final_2}. In the $\ve^2 \ln(1/a_\ve) = O(1)$ case, the same term becomes $O(1)$, affects the leading-order problems and thus leads to distinguished limits, see~\eqref{eqn:homo_dist} for the linear boundary condition and~\eqref{eqn:homo_nonlin} for the nonlinear boundary condition. Notice that the sink term in the distinguished limit~\eqref{eqn:homo_dist} is obtained by dividing the sink term in the standard limit~\eqref{eqn:first_nonuni_final} by $1+ \lambda \kappa / D_u > 1$, implying weaker effective nutrient uptake in the hair zone. This is because assuming $\ve^2 \ln(1/a_\ve) = O(1)$, the uptake rate per unit hair surface area becomes large, causing very sharp nutrient depletion near hairs so that the diffusion is not fast enough to keep the concentration profile uniform. Under these circumstances, the difference between the nutrient concentration at the hair surface (used in the full-geometry model) and the averaged nutrient concentration (used in the sink terms) becomes significant and this gives rise to the new limit. Subsequently, we rigorously proved the convergence of solutions of the multiscale problem to solutions of  the macroscopic equations for both the linear and nonlinear boundary conditions at surfaces of root hairs and confirmed the applicability of the two limit equations (as well as higher-order correctors) in different parameter regimes via numerical simulations.

%\bibliographystyle{amsalpha}
%\bibliographystyle{plain}
%\bibliography{sample_capital}

\newpage

\section*{Supplementary materials}

\subsection*{Parameter values}
The scaling in the boundary conditions on $\Gamma^\ve$ should be interpreted in terms of the experimental values for nutrient uptake rates by root hairs for different plant types.
Considering  the nondimensionalization of dimensional Michaelis-Menten boundary condition
$$ - D \nabla u \cdot \hat{\bf n } = \frac{F_{h} u}{K_{h} + u} $$
via $ x = R \widetilde x$, $t = R^2 \widetilde{t}/D $,   $ u = K_{h} \widetilde{u}$  gives
\begin{equation} \label{eqn:bc_hair_alpha_beta_scalings}
- \widetilde{\nabla} \widetilde{u}  \cdot  {\bf n } = \frac{F_{h} R}{K_{h} D} \frac{\widetilde u }{1+\widetilde u } = \frac{\ve}{a_{\ve}} \frac{r_{h} R^2 F_h}{K_h l^2 D} \frac{\widetilde u }{1+\widetilde u} = \frac{\ve}{a_{\ve}} \widetilde \alpha \frac{\widetilde u }{1+\widetilde u},
\end{equation}
%%As the efflux term is usually one order of magnitude weaker than the Michaelis-Menten term (see Table \ref{table:phosphate parameters}), we did not consider this term in our analysis, and we will neglect it in this section as well.
where $r_{h}$ denotes the dimensional hair radius, $l$ denotes the dimensional inter-hair distance and $\widetilde \alpha = ( r_{h} R^2 F_h)/(K_h l^2 D)$. Considering the range of phosphate uptake parameters $F_h$ and $K_h$ as reviewed in \cite{leitner2010_2}, and $D = 10^{-5}$ cm$^2$ s$^{-1}$ \cite{nobel}, as well as  $R = 1$ cm, $l=0.01$ cm and $r_{h} \sim 10^{-4}$ cm, we conclude that $\widetilde \alpha = 10$ for wheat, while $\widetilde \alpha = 1$ arises when modelling sulphur and magnesium uptake by maize \cite{roose2000}.

\subsection*{Derivation of macroscopic equations for nonlinear boundary conditions on root hair surfaces}

\subsubsection*{Case $\ve \ln{(1/a_\ve)} = O(1)$}
Following the same procedure as in Section~\ref{sec:sparse_first_nonuni} of the main text, we obtain the same equations as in \eqref{eq_inner_22}, but with different boundary conditions for
$u_2^I$, $u_3^I$, and $u_4^I$, namely
\begin{equation}\label{eq_inner_nonlin_1}
\begin{aligned}
 &  D_u   \nabla_{z} u_{2}^{I} \cdot \hat{\bf n } = -\kappa g(u_{0}^{I})\;  \;  \text{ on }   \partial B_1, \quad   D_u   \nabla_{z} u_{3}^{I} \cdot \hat{\bf n } = -\kappa g^\prime(u_{0}^{I}) u_1^I\;  \;  \text{ on }   \partial B_1, \\
   &  D_u   \nabla_{z} u_{4}^{I} \cdot \hat{\bf n } =   - \kappa \big[g^\prime(u_0^I) u_2^I+ \frac 12  g^{\prime\prime}(u_{0}^{I})  (u_1^I)^2\big]  \quad   \text{ on }   \partial B_1.
 \end{aligned}
  \end{equation}
  Hence the corresponding solutions are
  $$
\begin{aligned}
& u_j^I(t,x,z) = u_j^I(t,x), \; \; \; j=0,1, \quad % \quad u_1^I(t,x,z) = u_1^I(t,x), \\
 u_{2}^{I}(t,x,z) = (\kappa/D_u) g(u_{0}^{I}) \ln{(\|z\|)} + U^I_2(t,x), \\
& u_{3}^{I}(t,x,z) = (\kappa/D_u) g^\prime(u_{0}^{I}) u_1^I \ln{(\|z\|)} + U_3^I(t,x),\\
& u_{4}^{I}(t,x,z) = (\kappa/D_u) \big[g^\prime(u_0^I) U_2^{I}(t,x) + \frac 12  g^{\prime\prime}(u_{0}^{I})  (u_1^I)^2\big]   \ln{(\|z\|)} + U_4^I(t,x).
\end{aligned}
$$
Then by matching inner approximation  $u_2^I$ and outer approximation $u_2^O$ we obtain for $u_2^O$ equation \eqref{eqn:first_nonuni} with $g(u_0^I)$ instead of $u_0^I$ and  for $u_0^O$  equation \eqref{eqn:first_nonuni_OI} with  $g(u_0^I)$ instead of $u_0^I$.
We also obtain the same matching condition \eqref{matching_limit_1}.
Hence we obtain an effective equation
\begin{equation} \label{eqn:first_nonlin_1_CE_only}
\begin{aligned}
&\partial_t  u_{0} = \nabla_{x}\cdot(D_u \nabla_x u_{0})- 2 \pi \kappa \, g(u_{0})\,  \chi_{\Omega_L}&& \text{ in } \Omega, \; t>0,
\end{aligned}
\end{equation}

\subsubsection*{Case $\ve^2 \ln{(1/a_\ve)} = O(1)$}
Applying the formal asymptotic expansion ansatz \eqref{eqn:ansatz_dist} in multiscale problem \eqref{main_1}--\eqref{bc_hair}, \eqref{bc_rest},  \eqref{ic} again yields~\eqref{outer_22_nextscaling}, equipped here with the modified boundary condition
$$
\begin{aligned}
& \left( e^{{\lambda}/{\ve^{2}}}  \ve^{-1} D_u \nabla_{z} + D_u \nabla_{x} \right) \left( u_0 + \ve u_{1}  + \cdots \right)  \cdot \hat{\bf n }  = -\ve \,  \kappa \, e^{\lambda/\ve^2} g\big(u_{0}+ \ve u_1 + \cdots \big)\\
 &  = -\ve \,  \kappa \, e^{\lambda/\ve^2} \big[g(u_{0}) + \ve  g^\prime(u_0) u_1 + \ve^2  g^\prime(u_0) u_2 +  \ve^2 \frac 12  g^{\prime\prime} (u_0) u_1^2+ \cdots \big]   \; \; \text{ on }  \Omega_L \times \partial B_1.
\end{aligned}
$$
In the case of  inner solutions,     for $u_0^I$ and $u_1^I$ we have  the same equations  and boundary conditions as in \eqref{eq_inner_22}  and for $u_2^I$,  $u_3^I$, and $u_4^I$ we obtain the same equations as in \eqref{eq_inner_22} but with different boundary conditions 
\begin{equation}\label{eq_inner_nonlin}
\begin{aligned}
 &  D_u   \nabla_{z} u_{2}^{I} \cdot \hat{\bf n } = -\kappa g(u_{0}^{I})  \quad  && \text{ on }   \partial B_1, \qquad \\
 & D_u   \nabla_{z} u_{3}^{I} \cdot \hat{\bf n } = -\kappa g^\prime(u_{0}^{I}) u_1^I \quad && \text{ on }   \partial B_1, \\
    &  D_u   \nabla_{z} u_{4}^{I} \cdot \hat{\bf n } =   - \kappa \big[g^\prime(u_0^I) u_2^I+ \frac 12  g^{\prime\prime}(u_{0}^{I})  (u_1^I)^2\big]  \quad   && \text{ on }   \partial B_1.
 \end{aligned}
  \end{equation}
  Hence the inner approximation reads
\begin{equation}
\begin{aligned}
u_\ve^{I}(t,x) & =  u_0^I(t,x) + \ve  u_1^I(t,x) + \ve^2 U_2^I(t,x) + \ve^2 (\kappa/D_u) g(u_0^I) \ln{(\|z\|)} \\
& + \ve^3 \Big[(\kappa/D_u) g^\prime(u_0^I)\, u_1^I \ln{(\|z\|)} + U_3^I(t,x)\Big] \\
& +
 \ve^4 \Big[\frac{\kappa}{D_u} \big(g^\prime(u_0^I)\, U_2^I  + \frac 12  g^{\prime\prime}(u_{0}^{I}) (u_1^I)^2\big)\ln{(\|z\|)} + U_4^I(t,x)\Big] +\cdots.
 \end{aligned}
\end{equation}
Then in terms of outer variables $y$ the inner approximation $u_\ve^{I}$ has the form
\begin{eqnarray}
u_\ve^{I}  
%&& = u_0^I(t,x) + \ve  u_1^I(t,x) + \ve^2 D^I(t,x) + \ve^2 \frac\kappa{D_u} \big[g(u_0^I) \ln{(\|y\|)} + g(u_0^I) \frac{\lambda}{\ve^2} \big] \nonumber\\
%&& \qquad \quad +  \ve^3\big[\frac\kappa{D_u} g^\prime(u_0^I) u_1^I \ln{(\|y\|)} +  \frac{\kappa}{D_u} g^\prime(u_0^I) u_1^I \frac \lambda{\ve^2}  + F^I(t,x)\big]   \nonumber \\
%&& \qquad \quad +  \ve^4\big[\frac\kappa{D_u}\big( g^\prime(u_0^I) D^I +  \frac 12  g^{\prime\prime}(u_{0}^{I}) (u_1^I)^2\big) \big(\ln{(\|y\|)} +\frac \lambda{\ve^2}\big)  + H^I(t,x)\big]  +  \cdots  \\
&& = \Big( u_0^I + \lambda \frac{\kappa}{D_u} g(u_0^I) \Big) + \ve  \Big( u_1^I + \lambda \frac{\kappa}{D_u} g^\prime(u_0^I) u_1^I \Big) \nonumber \\
&& \qquad \quad+ \ve^2 \big[U_2^I + \frac{\kappa}{D_u} g(u_0^I) \ln{(\|y\|)} + \lambda \frac{\kappa}{D_u}\big( g^\prime(u_0^I) U_2^I +  \frac 12  g^{\prime\prime}(u_{0}^{I}) (u_1^I)^2\big)  \big] + \cdots. \nonumber
\end{eqnarray}
In the same way  as in   Subsection~\ref{sec:sparse_next_nonuni}, for the outer approximation  we obtain
\begin{equation*}%\label{outer_der_nonlin}
\begin{aligned}
 u_\ve^{O}(t,x) = u_0^O(t,x) + \ve u_1^O(t,x) + \ve^2 \Big(  U_2^O(t,x) + 2 \pi (\kappa/D_u)  g(u_{0}^I(t,x)) \psi(y) \Big)+ \cdots. % \\ +  \ve^3 u_3^O(t,x,y) + \cdots.
 \end{aligned}
\end{equation*}
Then the matching condition for inner and outer solutions  for zero order terms implies
\begin{equation}\label{nonlin_relat_0}
 u_0^O(t,x)  =   u_0^I(t,x) + \lambda (\kappa/D_u)  g(u_0^I(t,x)),
\end{equation}
 and the macroscopic equation for $u_0(t,x) = u_0^O(t,x)$ reads
 \begin{equation} \label{eqn:homo_nonlin_CE_only}
\begin{aligned}
&\partial_t u_{0}  = \nabla_{x}\cdot(D_u \nabla_x u_{0}) - 2\pi \kappa \,  g(h(u_{0})) \chi_{\Omega_L} && \text{ in } \Omega,  \; t>0,
\end{aligned}
\end{equation}
where $h=h(u_0)$ is the solution of  $u_0 =h  + \lambda \,  (\kappa/D_u) g(h)$.

Adopting the Michaelis-Menten boundary condition \eqref{eqn:MM_form},  condition \eqref{nonlin_relat_0} can be rewritten as a quadratic equation
\begin{equation}\label{eq_quad_MM}
 (u_0^{I})^2 + u_0^I \big( \lambda (\kappa/D_u) + 1 - u_0^O \big) - u_0^O =0,
  \end{equation}
with unique non-negative solution
$$u_{0}^I = \frac 12 \Big[ \sqrt{(u_{0}^O- \lambda (\kappa/D_u) - 1)^2 + 4 u_0^O} + u_{0}^O - \lambda \frac{\kappa}{D_u} - 1 \,  \Big],$$
%given by $$u_{0}^I = \frac 12 \Big[ - \lambda \frac{\kappa}{D_u} - 1 + u_{0}^O  + \sqrt{(u_{0}^O- \lambda (\kappa/D_u) - 1)^2 + 4 u_0^O} \, \Big].$$
and the effective equation \eqref{eqn:homo_nonlin_CE_only} thus becomes \eqref{eqn:MM_dist_CE_explicit}.

\begin{figure}[ht]
\centering
\begin{overpic}[width=0.6\textwidth,tics=10]{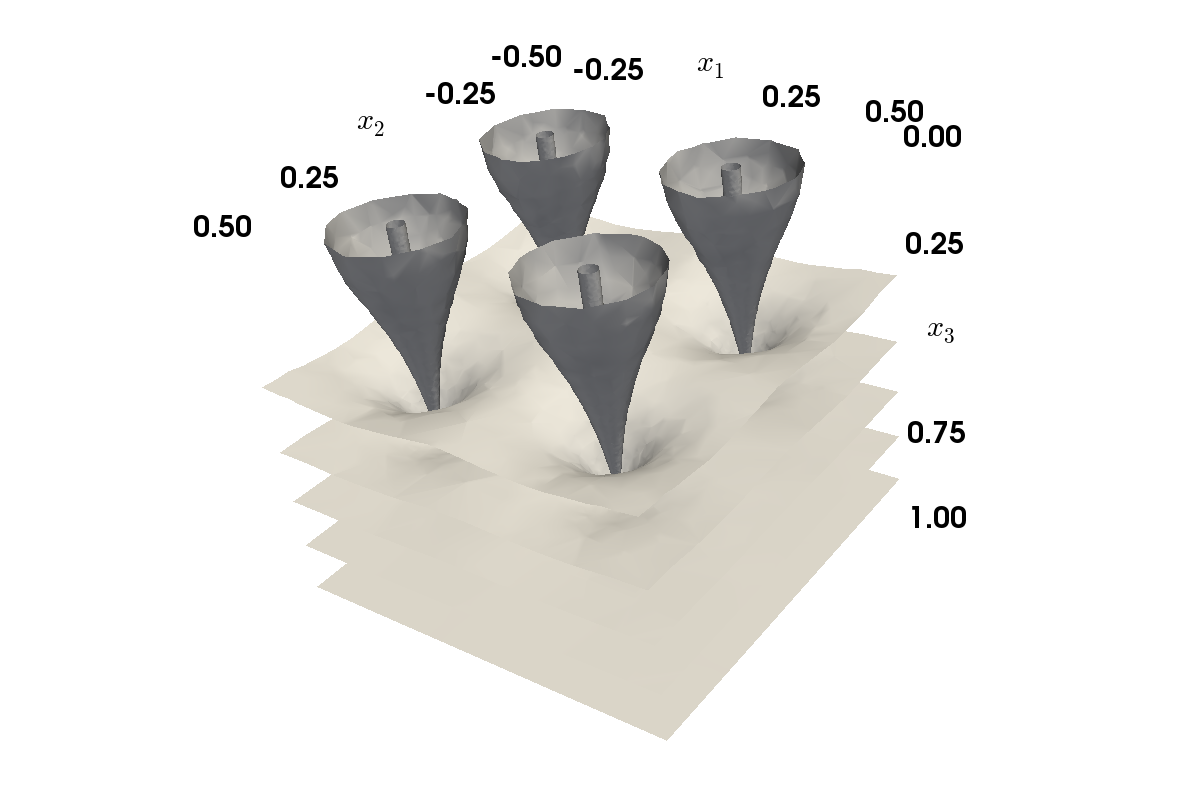}
%\put(217,95){$M$}
%\put(31,81){$1$}
%\put(160,120){$\Omega^\ve$}
\linethickness{1pt}
%\color{grey}
%\put(157,117){\vector(-1,-2){6}}
%%\put(145,23){\vector(1,0){20}}
\color{black}
\put(185,130){\vector(0,-1){90}}
%\put(157,30){$x_1$}
%\put(120,17){$x_2$}
%\put(151,0){$x_3$}
%\put(330,184){\vector(-1,0){70}}
%\put(155,13){$\Omega^\ve$}
\end{overpic}
\caption{Isosurfaces of nutrient concentration support the intuition that with the chosen boundary conditions, the (steady-state) solution has the same behavior in every periodicity cell ($a_\ve=0.01$, $\ve = 0.5$). The arrow points in the direction of increasing $x_3$ (i.e. away from the root surface located at $x_3 = 0$).}
  \label{fig:isosurfaces_eps_05}
\end{figure}

\newpage

\bibliographystyle{plain}
\bibliography{sample_capital}

\end{document}